\newcommand\org@hypertarget{}
\let\org@hypertarget\hypertarget
\renewcommand\hypertarget[2]{%
  \Hy@raisedlink{\org@hypertarget{#1}{}}#2%
} 
\newtheorem{theorem}{Theorem}[section]
\newtheorem{lemma}[theorem]{Lemma}
\newtheorem{corollary}[theorem]{Corollary}
\newtheorem{proposition}[theorem]{Proposition}
\newtheorem*{theorem*}{Theorem}
\theoremstyle{definition}
\newtheorem{definition}[theorem]{Definition}
\newtheorem{remark}[theorem]{Remark}
\newtheorem{example}[theorem]{Example}
\newtheorem{conjecture}[theorem]{Conjecture}
\newtheorem{question}[theorem]{Question}
\newtheorem{key construction}[theorem]{Key construction}
\newtheorem{notat}[theorem]{Notation}
\newcommand{\xysquare}[8]{
\[\xymatrix{
#1 \ar@{#5}[r] \ar@{#6}[d] & #2 \ar@{#7}[d]\\
#3 \ar@{#8}[r] & #4
}\]
}
\newcommand{\bb}{\mathbb}
\newcommand{\comment}[1]{}
\renewcommand{\phi}{\varphi}
\newcommand{\roi}{\mathcal{O}}
\newcommand{\xto}{\xrightarrow}
\renewcommand{\cal}{\mathcal}
\renewcommand{\hat}{\widehat}
\renewcommand{\frak}{\mathfrak}
\renewcommand{\tilde}{\widetilde}
\renewcommand{\ker}{\operatorname{Ker}}
\renewcommand{\projlim}{\varprojlim}
\DeclareMathOperator{\Frac}{Frac}
\DeclareMathOperator{\gr}{gr}
\DeclareMathOperator{\Pic}{Pic}
\DeclareMathOperator{\Spec}{Spec}
\def\et{{\mathrm{\acute{e}t}}}
\newcommand{\CH}{C\!H}
\newcommand{\K}{\hat{\cal K}}
\newcommand{\rcoeq}[3]{\xymatrix{ #1\ar@/^3mm/[r]^f \ar@/_3mm/[r]_g & #2 \ar[l]_e\ar[r] & #3}}
\DeclareSymbolFontAlphabet{\mathbbm}{bbold}
\begin{document}
\itemsep0pt

\title{Zero-cycles in families of rationally connected varieties}

\author{Morten L\"uders }

\date{}

\maketitle

\begin{abstract} We study zero-cycles in families of rationally connected varieties. We show that for a smooth projective scheme over a henselian discrete valuation ring the restriction of relative zero cycles to the special fiber induces an isomorphism on Chow groups if the special fiber is separably rationally connected. We further extend this result to certain higher Chow groups and develop conjectures in the non-smooth case. Our main results generalise a result of Koll\'ar \cite{Kollar2004}. \end{abstract}
\tableofcontents

\section{Introduction}
In this article we study algebraic cycles in families of rationally connected varieties. Since their introduction in the 90s by Koll\'ar, Miyaoka and Mori (see for example \cite{KollarMiyaokaMori1992} and \cite{Kollar1996}), these varieties have played an important role in algebraic geometry. In order to be mindful of inseparability problems in positive characteristic, we work with the following definition. A smooth and proper variety $X$ over a field $k$ is called separably rationally connected if there is a variety $U$ and a morphism $F:U\times \bb P^1\to X$ such that the induced map
$$F(-,(0:1))\times F(-,(1:0)): U\to X\times X$$
is dominant and separable. Being separably rationally connected implies being  rationally connected and the converse is true if $\mathrm{ch}(k)=0$. If a proper smooth variety $X$ over an algebraically closed field $k$ is separably rationally connected, then any two distinct closed points $p,q\in X$ may be joined by a rational curve, that is are in the image of a morphism $\bb P^1\to X$ \cite[Ch.IV, Thm. 3.9]{Kollar1996}, which implies in particular that $\CH_0(X)\cong \bb Z$. Rationally connected varieties enjoy further remarkable properties which distinguish them from general varieties. One of these features, on which we focus in this article, is that many results about algebraic cycles in families, which usually just hold with finite coefficients, hold integrally for rationally connected varieties. Our main theorem is the following:
\begin{theorem}[Thm. \ref{main_theorem_in_text}]\label{main_theorem} Let $A$ be a henselian discrete valuation ring with residue field $k$. Let $S=\Spec(A)$ and $X$ be a smooth projective scheme over $S$ of relative dimension $d$. Assume that the special fiber $X_k$ of $X$ is separably rationally connected. Let $(X)^{d,g}$ denote the set of codimension $d$ points $x$ of $X$ such that $\overline{\{x\}}$ is flat over $S$. Let $(X)^{d,t}\subset (X)^{d,g}$ denote the subset of points $x$ for which in addition the intersection of $\overline{\{x\}}$ and $X_k$ is transversal.
Then the following statements hold:
\begin{enumerate}
\item The restriction map $\CH^d(X)\to \CH^d(X_k)$ is an isomorphism.
\item Assume that the Gersten conjecture for Milnor K-theory and for higher Chow groups holds for henselian discrete valuation rings. Then for each $j\geq 1$ and $x\in (X_k)^{(d)}$ the image of the map 
$$\ker[\bigoplus_{y\in \Spec(\roi_{X,x})^{(d),g}}K^M_j(y)\xto{\partial} K^M_{j-1}(x)]\to \CH^{d+j}(X_{},j), $$ which arises from the local to global spectral sequence for higher Chow groups, is equal to the image of $\ker[K^M_j(y)\xto{\partial} K^M_{j-1}(x)]$ for some $y\in \Spec(\roi_{X,x})^{(d),t}$.  
\item Let the assumptions be as in (ii) and assume that $X_k$ has a rational point. If we assume further that the degree map $deg\colon\CH^d(X_{k'})\to \bb Z$ is an isomorphism for every finite field extension $k'/k$ (e.g. $k=\bb F_p$ or $\bb C$), then the vertical maps in the commutative diagram 
\begin{equation*}
\xymatrix{
\CH^{d+j}(X_{},j) \ar[d]_\cong  \ar[r]^{}  &  \CH^{d+j}(X_{k},j) \ar[d]^\cong  
\\
\hat K^M_j(A)  \ar[r]^{}   & K^M_j(k) 
}
\end{equation*}
are isomorphisms for all $j\geq 0$. Here $\hat K^M_j$ denotes the improved Milnor K-theory of a ring defined in \cite{Kerz2010}. 
\end{enumerate}
\end{theorem}

The assumptions in $(ii)$ on the Gersten conjecture are always satisfied if $A$ is of equal characteristic. Furthermore, by Proposition \ref{identificationwithZero-cycles with coefficients in Milnor K-theory}(ii) we do not need to assume the Gersten conjecture if $j\leq 1$ or even if $j\leq 2$ if $k$ is finite by Proposition \ref{Gersten_higherChow}.

\begin{corollary}[Cor. \ref{corollary_main_thm_in_text}]\label{corollary_main_thm}
Let the notation be as in Theorem \ref{main_theorem} and denote the generic fiber of $X$ by $X_K$. Then there is a a zigzag of isomorphisms
$$\CH^{d}(X_{k})\xleftarrow{\cong} \CH^{d}(X_{}) \xto{\cong}  \CH^{d}(X_{K}).$$ 
\end{corollary}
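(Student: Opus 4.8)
The left-hand map is an isomorphism by Theorem~\ref{main_theorem}(1), so the entire content of the corollary is the claim that the restriction $j^{*}\colon \CH^{d}(X)\to \CH^{d}(X_{K})$ to the generic fibre, i.e.\ the flat pullback along the open immersion $j\colon X_{K}\into X$, is an isomorphism; composing the two maps then gives the asserted zigzag.

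The plan is to read this off the localization sequence for the complementary pair $i\colon X_{k}\into X$, $j\colon X_{K}\into X$. Because $X$ is regular and $X_{k}=V(\pi)$ is the principal (hence Cartier) divisor cut out by a uniformizer $\pi$ of $A$, the closed immersion $i$ is regular of codimension one and its normal bundle $N_{X_{k}/X}=i^{*}\OO_{X}(X_{k})\cong \OO_{X_{k}}$ is trivial. Bloch's localization long exact sequence, together with the vanishing of higher Chow groups in negative degrees, then truncates to the right-exact piece
$$\CH^{d-1}(X_{k})\xto{i_{*}}\CH^{d}(X)\xto{j^{*}}\CH^{d}(X_{K})\to 0,$$
so that $j^{*}$ is surjective and $\ker(j^{*})=\Im(i_{*})$, where $i_{*}\colon \CH^{d-1}(X_{k})=\CH_{1}(X_{k})\to \CH_{1}(X)=\CH^{d}(X)$ is proper pushforward. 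It remains only to show $i_{*}=0$.

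To that end I would compose $i_{*}$ with the Gysin map $i^{!}\colon \CH^{d}(X)\to \CH^{d}(X_{k})$. This $i^{!}$ is exactly the restriction-to-the-special-fibre map of Theorem~\ref{main_theorem}(1), hence an isomorphism and in particular injective. The self-intersection formula gives $i^{!}\circ i_{*}=c_{1}(N_{X_{k}/X})\cap(-)$, which vanishes because $N_{X_{k}/X}$ is trivial; injectivity of $i^{!}$ then forces $i_{*}=0$, whence $\ker(j^{*})=0$ and $j^{*}$ is an isomorphism. The one point requiring care is purely bookkeeping: over a discrete valuation ring the open immersion $j$ lowers absolute dimension by one on horizontal cycles, so the localization sequence must be indexed by codimension, and one must confirm that the restriction map of Theorem~\ref{main_theorem}(1) really is the Gysin map $i^{!}$ of the regular embedding $X_{k}\into X$. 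Once these identifications are fixed, the triviality of the normal bundle—a direct consequence of $X_{k}$ being a principal divisor—does all the work, so I expect no serious obstacle beyond this identification.
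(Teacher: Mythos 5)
Your proposal is correct and takes essentially the same route as the paper: the paper also reads the statement off the right-exact localisation sequence $\CH^{d-1}(X_k)\to \CH^{d}(X)\xto{g^*}\CH^{d}(X_K)\to 0$ and kills the pushforward from $\CH^{d-1}(X_k)$ by composing it with the restriction map $\CH^{d}(X)\xto{\cong}\CH^{d}(X_k)$ of Theorem \ref{main_theorem_in_text}(i). The only difference is one of detail: the paper simply asserts that this composition vanishes, whereas you justify it via the self-intersection formula and the triviality of the normal bundle of the principal divisor $X_k$, which is a correct (and standard) way to fill in that step.
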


Theorem \ref{main_theorem} and Corollary \ref{corollary_main_thm} generalise a theorem of Koll\'ar which was the main motivation for our article: in \cite{Kollar2004}, Koll\'ar proves that, under the same assumptions as in Theorem \ref{main_theorem}(i), the specialisation map
$sp:\CH_0(X_K)\to \CH_0(X_k)$
is an isomorphism if the residue field $k$ is perfect. Another motivation is that, under the same assumptions as in Theorem \ref{main_theorem}, but not assuming that the special fiber $X_k$ is separably rationally connected, there is an isomorphism $\CH^{d+j}(X_{},j,\bb Z/n) \xto{\cong} \CH^{d+j}(X_{k},j,\bb Z/n)$ for $(n,\mathrm{ch}(k))=1$.  For $j=0$ this theorem is due to Saito and Sato \cite[Cor. 9.5]{SS10} if additionally $k$ is finite or algebraically closed. A different proof was given by Bloch in \cite[App.]{EWB16}. In \cite{KerzEsnaultWittenberg2016}, Kerz, Esnault and Wittenberg showed that the assumptions on the residue field can be dropped and extended the result to the semi-stable case. The assumptions on the singularities of the special fiber were relaxed further by Binda and Krishna in \cite{BindaKrishna2021}. Finally, in \cite{Lu17} the author showed the isomorphism for all $j\geq 0$ in the smooth case.

In a similar vein, we show the following theorem:
\begin{theorem}[Thm. \ref{theorem_higher_zero}]\label{main_thm_2_in_text}
Let the notation be as in Theorem \ref{main_theorem}. Then the restriction map
$$res^{d,1}_{\bb Z}:\CH^d(X,1)\to \CH^d(X_k,1)$$
is surjective.
\end{theorem}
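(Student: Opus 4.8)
Write $\dim X=d+1$ and recall Bloch's formula: for a smooth variety $Y$ one has $\CH^{p}(Y,1)\cong H^{p-1}(Y,\mathcal{K}^{M}_{p})$, computed by the Gersten complex of the Milnor $K$-sheaf. Thus a class in $\CH^{d}(X_{k},1)$ is represented by a finite sum $\sum_{i}(C_{i},g_{i})$ with $C_{i}\subset X_{k}$ integral curves, $g_{i}\in k(C_{i})^{\times}$, subject to the cocycle condition $\sum_{i}\operatorname{div}_{C_{i}}(g_{i})=0$ in $Z_{0}(X_{k})$; a class in $\CH^{d}(X,1)$ is represented by pairs $(Z,f)$ with $Z\subset X$ an integral surface, $f\in k(Z)^{\times}$ and $\sum\operatorname{div}_{Z}(f)=0$ in $Z_{1}(X)$. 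The restriction $res^{d,1}_{\bb Z}$, which I abbreviate $res$, is the Gysin pullback along the principal Cartier divisor $i\colon X_{k}\hookrightarrow X$; on cycles in good position it is intersection with $X_{k}$, and it sends a horizontal pair $(Z,f)$ (that is, $Z$ flat over $S$ and $f$ a unit along $Z_{k}$) to $(Z_{k},f|_{Z_{k}})$. So the plan is: given a cocycle $\beta=\sum_{i}(C_{i},g_{i})$ on $X_{k}$, construct horizontal surfaces $Z_{i}\subset X$ with $Z_{i,k}=C_{i}$ and functions $f_{i}$ restricting to $g_{i}$, and then repair the failure of the cocycle condition on $X$.

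\textbf{Lifting free rational curves and functions.} Suppose first that every supporting curve $C\cong\mathbb{P}^{1}$ is a smooth free rational curve (this is the point to be arranged using separable rational connectedness; see below). For such a $C$ the normal-bundle sequence $0\to N_{C/X_{k}}\to N_{C/X}\to N_{X_{k}/X}|_{C}\to 0$, together with the triviality of $N_{X_{k}/X}$ (as $X_{k}=\operatorname{div}(\pi)$) and $H^{1}(C,N_{C/X_{k}})=0$, gives $H^{1}(C,N_{C/X})=0$. Hence $\operatorname{Hilb}(X/S)$ is smooth over $S$ at $[C]$, and since $A$ is henselian the $k$-point $[C]$ lifts to an $A$-point, i.e. a surface $Z\subset X$ flat over $S$ with $Z_{k}=C$. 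As $\mathcal{O}_{Z,C}$ is a discrete valuation ring with residue field $k(C)$, every $g\in k(C)^{\times}$ is the restriction of a unit $f\in\mathcal{O}_{Z,C}^{\times}\subset k(Z)^{\times}$, which lifts the function. By construction $D:=\sum_{i}\operatorname{div}_{Z_{i}}(f_{i})$ is a horizontal $1$-cycle on $X$ with $res(D)=\sum_{i}\operatorname{div}_{C_{i}}(g_{i})=0$.

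\textbf{Closing up via Theorem \ref{main_theorem}(i).} The chain $\sum_{i}(Z_{i},f_{i})$ fails to be a cocycle only through $D$. But $res(D)=0$ forces $[D]=0$ in $\CH^{d}(X_{k})=\CH_{0}(X_{k})$, and by Theorem \ref{main_theorem}(i) the restriction $\CH^{d}(X)\to\CH^{d}(X_{k})$ is injective; hence $[D]=0$ in $\CH^{d}(X)=\CH_{1}(X)$, so $D=\sum_{l}\operatorname{div}_{W_{l}}(h_{l})$ for suitable surfaces $W_{l}$ and functions $h_{l}$. Setting $\alpha:=\sum_{i}(Z_{i},f_{i})-\sum_{l}(W_{l},h_{l})$ produces a genuine cocycle, hence a class $[\alpha]\in\CH^{d}(X,1)$, and one computes $res[\alpha]=\beta-\big[res\sum_{l}(W_{l},h_{l})\big]$.

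\textbf{The main obstacle.} Two SRC-moving inputs must be secured, and I expect their interaction to be the technical heart of the proof. First, one must reduce $\beta$ within its class to a cocycle supported on free rational curves; through general points of a separably rationally connected variety pass very free rational curves \cite{Kollar1996}, and combs and smoothing should allow replacing the $C_{i}$ by free curves, but transporting the functions $g_{i}$ along the smoothing requires care. Second, and more seriously, the trivialization $D=\sum_{l}\operatorname{div}_{W_{l}}(h_{l})$ furnished abstractly by Theorem \ref{main_theorem}(i) need not restrict to a trivial rational equivalence on $X_{k}$, so a priori the correction term $\big[res\sum_{l}(W_{l},h_{l})\big]$ is nonzero. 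The way around this is not to lift and correct in two independent steps, but to realize $D$ as the boundary of surfaces-with-functions that are themselves horizontal relative free rational curves over $S$, so that their restrictions to $X_{k}$ lie in the image of the Milnor $K_{2}$-differential and hence represent $0$ in $\CH^{d}(X_{k},1)$. Equivalently, one must show that the rational equivalence underlying the specialization isomorphism of $\CH^{d}$ can be chosen, on the separably rationally connected fiber, to be carried by free rational curves that deform over $S$. Granting this compatibility the correction term vanishes, $res[\alpha]=\beta$, and surjectivity follows; establishing it precisely is where the separable rational connectedness of $X_{k}$ and the henselianness of $A$ are used in full.
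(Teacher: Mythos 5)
Your overall skeleton --- lift the chain, then correct its boundary --- is the same as the paper's, but there is a genuine gap at exactly the point you flag as ``the main obstacle'': you never prove that the trivializing chain for $D$ can be chosen with vanishing restriction; you only assert that ``granting this compatibility'' the result follows. That compatibility is not a refinement to be filled in later, it is the entire content of the theorem. Concretely, the missing statement is: every $D\in\ker[res\colon Z_1(X)\to Z_0(X_k)]$ equals $\partial(\xi)$ for some $\xi\in C_2(X,-1)$ with $res(\xi)=0$. The paper proves precisely this, in two stages. First, in relative dimension one (assuming only that the \emph{generic} fiber is separably rationally connected) it introduces the sheaf $\mathcal{M}^*_{X;X_k}$ of invertible meromorphic functions congruent to $1$ at the generic point of $X_k$ and shows that the associated relative Picard group $\Pic(X,X_k)$ vanishes: this follows from the long exact sequence of $0\to\mathcal{O}^*_{X;X_k}\to\mathcal{M}^*_{X;X_k}\to Div(X,X_k)\to 0$, the surjectivity $H^0(X,\mathcal{O}_X^\times)\twoheadrightarrow H^0(X_k,\mathcal{O}_{X_k}^\times)$ (units lift over a henselian base), and the isomorphism $\Pic(X)\cong\Pic(X_k)$ of Proposition \ref{proposition_curve_case}. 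Vanishing of $\Pic(X,X_k)$ says exactly that a horizontal divisor restricting to zero on $X_k$ is the divisor of a function congruent to $1$ along $X_k$, which is the required $\xi$. Second, the general case is reduced to relative dimension one exactly as in Step 3 of the proof of Proposition \ref{theorem_lifting_well_defined}: reduce to two horizontal curves through one point meeting $X_k$ transversally, place them inside a relative curve over $S$ produced by Koll\'ar's comb-smoothing theorem together with Hensel's lemma, resolve, apply the $d=1$ case there, and push forward. Your closing slogan --- ``realize $D$ as the boundary of horizontal relative curves'' --- is a fair description of this, but without the relative Picard computation and the comb argument it is not a proof.

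There is a secondary gap in your lifting step: it requires the cocycle $\beta$ to be supported on smooth free rational curves, so that $H^1(C,N_{C/X})=0$ makes the Hilbert scheme smooth at $[C]$, and you admit that moving $\beta$ to such a representative while transporting the functions $g_i$ ``requires care.'' The paper sidesteps this entirely: chain-level surjectivity of $res\colon C_2(X,-1)\to C_1(X_k,0)$ holds for arbitrary curves and with no rational connectedness hypothesis, by \cite[Lem. 2.1]{Lu16} --- one embeds any given curve of $X_k$ in the special fiber of a relative surface $Z\subset X$ containing it with multiplicity one and lifts the function there --- so no moving of $\beta$ is needed, and all the SRC input is concentrated in the boundary-correction step described above.
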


Let us give an overview of the structure of the article. In Section \ref{section_Filtrations on higher zero-cycles} we recall the theory of Chow groups with coefficients in Milnor K-theory which is needed for Theorems \ref{main_theorem}(ii) and \ref{main_thm_2_in_text}. For example, if $X_k$ is a variety of dimension $d$ over a field $k$, then for all $j\geq 0$ $$CH^{d+j}(X_k,j)\cong \mathrm{coker}(\oplus_{x\in X_k^{(d-1)}}K_{j+1}^M(x)\rightarrow \oplus_{x\in X_k^{(d)}}K_{j}^M(x)).$$ 
If $k$ is a local field and $X_k$ proper, then the group $CH^{d+1}(X_k,1)$ is often denoted by $SK_1(X_k)$ and appears in the study of higher dimensional local class field theory as the domain of the reciprocity map $\rho_X:SK_1(X_k)\to \pi_1^{\mathrm{ab}}(X_k) $ (see for example \cite{Fo15}).
We also set up a theory of correspondences and their action on Chow groups with coefficients in Milnor K-theory. This allows to prove the following theorem: 
\begin{theorem}[Thm. \ref{theorem_finite_exponent}]\label{theorem_intro_finite_exponent}
If $X_k$ is smooth projective over $k$ and rationally connected, then $$\ker(\CH^{d+j}(X_k,j)\to K^M_j(k))$$ is of finite exponent.
\end{theorem}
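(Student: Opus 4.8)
The plan is to run a decomposition of the diagonal argument, feeding it into the correspondence calculus set up above. Write $f\colon X_k\to\Spec(k)$ for the structure morphism, so that the map in the statement is the proper pushforward $f_*\colon\CH^{d+j}(X_k,j)\to\CH^{j}(\Spec k,j)=K^M_j(k)$, which on the presentation as a cokernel is the sum of the norm maps $N_{k(x)/k}$ over the closed points $x\in X_k^{(d)}$. Since $X_k$ is rationally connected, its zero cycles are universally supported on a point: over the algebraic closure $\overline{k(X_k)}$ of the function field the degree map $\CH_0(X_{\overline{k(X_k)}})\to\bb{Z}$ is an isomorphism. By the Bloch--Srinivas method -- restricting the diagonal to the generic fibre of the first projection and using the localisation sequence -- I would produce an integer $N\geq 1$, a zero cycle $z_0$ on $X_k$ of degree $N$, and a cycle $\Gamma$ supported on $Z\times X_k$ for some closed subscheme $Z\subsetneq X_k$, such that
\[
N\,\Delta_{X_k}=X_k\times z_0+\Gamma\quad\text{in }\CH^d(X_k\times X_k).
\]
Over $\overline{k(X_k)}$ the tautological point and $(z_0)$ have the same degree, hence agree in $\CH_0$; the descent to $k(X_k)$ only costs a further bounded multiple, which I absorb into $N$.

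Next I would let both sides act on a class $\alpha\in\ker(f_*)\subseteq\CH^{d+j}(X_k,j)$, using the convention $\Gamma_*\alpha=\mathrm{pr}_{2,*}(\Gamma\cdot\mathrm{pr}_1^*\alpha)$. The diagonal acts as the identity, so $N\alpha=(X_k\times z_0)_*\alpha+\Gamma_*\alpha$. The correspondence $X_k\times z_0$ factors through the pushforward to $\Spec k$: one gets $(X_k\times z_0)_*\alpha=(f_*\alpha)\cdot z_0$ in the $K^M_*(k)$-module structure, which vanishes precisely because $\alpha\in\ker(f_*)$. The whole problem therefore reduces to showing $\Gamma_*\alpha=0$, i.e. to controlling a correspondence supported on $Z\times X_k$ with $\dim Z<d$.

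For the last step the key point is a dimension vanishing. By the projection formula the action of $\Gamma=\iota_*\gamma$ factors through the restriction of $\alpha$ to $Z$; choosing a resolution (or, in positive characteristic, a de Jong alteration) $\pi\colon\tilde Z\to Z$ with $\tilde Z$ smooth projective and $\dim\tilde Z=\dim Z<d$, this restriction is computed by the lci pullback $\pi^*\alpha\in\CH^{d+j}(\tilde Z,j)$. But $\CH^{p}(W,n)=0$ whenever $p>\dim W+n$, since there are no cycles of negative dimension; with $p=d+j$, $n=j$ and $\dim\tilde Z<d$ this forces $\CH^{d+j}(\tilde Z,j)=0$, hence $\pi^*\alpha=0$ and $\Gamma_*\alpha=0$ (an alteration of degree $\delta$ only replaces this by $\delta\,\Gamma_*\alpha=0$, again absorbed into the exponent). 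Therefore $N\alpha=0$, so $\ker(f_*)$ is killed by $N$. The main obstacle is this last step together with its interface with the correspondence calculus: one must justify the projection formula, the flat and lci pullbacks, and the compatibility of the correspondence action with pullback for Chow groups with coefficients in Milnor K-theory -- exactly the formalism developed earlier -- and handle the singularities of the support $Z$ while keeping the resulting multiplicities bounded, so that the final exponent stays finite.
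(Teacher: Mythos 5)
Your proposal is correct and follows the same overall strategy as the paper's proof of Theorem \ref{theorem_finite_exponent}: a decomposition of the diagonal obtained from the vanishing of degree-zero zero-cycles over $\overline{k(X_k)}$, fed into the correspondence action on $\CH^{d+j}(X_k,j)\cong A_0(X_k,j)$ set up in Section \ref{section_correspondences}, with the term $X_k\times z_0$ acting by zero on $\ker(f_*)$ and the finite exponent coming from the torsion order $N$. The one genuine divergence is how the error correspondence $\Gamma$ supported on $Z\times X_k$ is killed. The paper does this with its moving Lemma \ref{lemma_moving}: any class in $A_0(X_k,j)$ is represented by Milnor K-theory symbols supported on closed points of $X_k\setminus Z$, after which $\mathrm{pr}_1^*\alpha\cdot\Gamma$ vanishes for support reasons --- no resolution of singularities, no contravariance for higher Chow groups, no alteration multiplicities. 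You instead factor $\Gamma_*$ through a smooth model $\tilde Z\to Z$ and invoke the dimension vanishing $\CH^{d+j}(\tilde Z,j)=0$; this is the classical Bloch--Srinivas treatment and it does work, but it is heavier: lifting $\Gamma$ to a cycle on $\tilde Z\times X_k$ needs a Noetherian induction over the locus where $\tilde Z\to Z$ fails to be an isomorphism (components of $\Gamma$ may lie over it), in characteristic $p$ you need de Jong's alterations with the degree absorbed into the exponent (as you correctly note), and you need the projection formula together with pullback along $\tilde Z\to X_k$ for the groups in question --- available in Rost's formalism since the target is smooth, or via Bloch/Levine for higher Chow groups, but noticeably more machinery than Lemma \ref{lemma_moving}, which the paper proved precisely to make this step elementary. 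In fact, within Rost's calculus your last step can be streamlined so as to avoid resolving $Z$ at all: the pullback $I(f)$ exists for any morphism $f\colon Z\to X_k$ with smooth target, and on $A_0$ it lands in $A_{-r}(Z,j+r)=0$ for $r=\dim X_k-\dim Z>0$, which is exactly the dimension vanishing you want. A further minor difference: the paper first reduces to $X_k(k)\neq\emptyset$ by the trace argument of Proposition \ref{proposition_push_pull} and decomposes against a rational point $P$, whereas you allow a zero-cycle $z_0$ of degree $N$; both bookkeepings yield the same conclusion.
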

This generalises a theorem of Colliot-Th\'el\`ene (for $j=0$) \cite[Prop. 11]{CT2005} to higher zero-cycles and leads us to extend the Bloch-Beilinson conjectures concerning filtrations on Chow groups to higher Chow groups 
(see Section \ref{subsection_BB}).
In Section \ref{section_codim_one} we study the case of relative (semi-stable) curves which is needed in Sections \ref{section_smooth_case} and \ref{section_higher_zero_cycles} in the proofs of our main theorems. The key idea, originally due to Koll\'ar, is to use the deformation theory of rational curves in order to reduce to the case of relative (semi-stable) curves.

It was observed in \cite{KerzEsnaultWittenberg2016} that when studying Chow groups in a family, more precisely on a regular flat and projective scheme $X$ over a henselian discrete valuation ring $A$, the ordinary Chow group of the special fiber $X_k$ should be replaced by a cohomological one. This is related to the fact that there is a restriction map for $K$-groups
$$K_0(\cal X)\to K_0(X_n)$$
induced by the inclusion $X_n\hookrightarrow X$, where $X_n=X\times_AA/\pi^n$ is the thickened special fiber, $\pi$ being a regular parameter. This map is natural in the sense that it comes from the restriction of locally free sheaves. Inspired by this, we conjecture that if the generic fiber $X_K$ of $X$ is
separably rationally connected, then, assuming the Gersten conjecture for Milnor K-theory for the first isomorphism, the restriction map 
$$res^d: CH^{d}(X)\cong H^{d}(X,\hat{\cal K}^M_{d,X}) \to \projlim_n H^{d}(X_k,\hat{\cal K}^M_{d,X_n})$$
is an isomorphism.
In Section \ref{section_d=2} we give some evidence for this conjecture in case that $d=2$:
\begin{theorem}(Thm. \ref{theorem_surfaces})
Let $X$ be as above, $ch(k)=p$ and $ch(K)=0$. Assume that
the reduced special fiber $X_k$ of $X$ is a simple normal crossing divisor.
Assume further that the Gersten conjecture holds for the Milnor K-sheaf $\K^M_{2,X}$ and that the induced map $ \CH^2(X)\cong H^{2}(X,\K^M_{2,X})  \to \projlim_n H^{2}(X_k,\K^M_{2,X_n})$ is surjective.
Then the map
$$res^2: \CH^2(X) \to \projlim_n H^{2}(X_k,\K^M_{2,X_n})$$
is an isomorphism.
\end{theorem}

In Appendix \ref{appendix_Gersten_higher_chow} we recollect some facts about the Gersten conjecture which we need in several places in the article. In Appendix \ref{section_coh_chow} we propose a theory of cohomological higher Chow groups of zero cycles which generalise the Levine-Weibel Chow group \cite{LeWe85}. These groups already appear naturally in Section \ref{section_codim_one} when studying curves. In general we expect them to calculate the lowest filtered piece of Quillen's higher K-groups. 

\begin{notat}
If $A$ is an abelian group, and $n$ a natural number, we denote by $A[n]$ the $n$-torsion of $A$. If $l$ is a prime number, we denote by $A_{l-\mathrm{tors}}$ the $l$-primary torsion of $A$ and by $A_{\mathrm{tors}}$ the entire torsion subgroup of $A$.
\end{notat}

\paragraph*{Acknowledgement.} I would like to thank Christian Dahlhausen, Salvatore Floccari and Stefan Schreieder for helpful comments and discussions and J\'anos Koll\'ar and Kay Rülling for helpful e-mail correspondence. Furthermore, I would like to thank the participants of a seminar on zero-cycles at Hannover, which led to some of the ideas of the article.


\section{Filtrations on higher zero-cycles}\label{section_Filtrations on higher zero-cycles}
\subsection{Chow groups with coefficients: the four basic maps}
We introduce Chow groups with coefficients from \cite{Ro96} and recall some of their properties. Note that in \cite{Ro96} Rost assumes that the base is a field. We need his results for schemes over discrete valuation rings but we formulate them in the largest generality possible. 
In this section let $S$ be an excellent\footnote{In order for the Gersten complex of a noetherian scheme $X$ to be a complex one needs to assume $X$ to be excellent (see \cite{Ka83} for the case of varieties over a field and \cite[Sec. 8.1]{GilleSzamuely2006} for the more general case of excellent noetherian schemes). Throughout the article we assume that the base scheme $S$ is excellent which implies that all the schemes which we consider and which are of finite type over $S$ are excellent.} noetherian scheme and $X$ be a $d$-dimensional scheme of finite type over $S$. Let
$$C_p(X,n):= \bigoplus_{x\in X_{(p)}}K^M_{n+p}(x)$$
and let 
$$d=d_X=\oplus \partial^x_y:C_p(X,n)\xrightarrow{} C_{p-1}(X,n)$$
be the map induced by the tame symbol (see \cite[(3.2)]{Ro96}). It can be shown that these maps fit into a complex
$$C_{d-*}(X,n):= C_d(X,n)\xrightarrow{d} C_{d-1}(X,n)\xrightarrow{d} C_{d-2}(X,n)\to \dots$$
which we call cycle complex with coefficients in Milnor K-theory. In fact this is just the Gersten complex for Milnor K-theory. We consider these complexes homologically with $C_p(X,n)$ in degree $p$ and let $$A_p(X,n):=H_p(C_{d-*}(X,n)).$$ 
By definition of the complex, there are isomorphisms
$$\CH_p(X)\cong A_p(X,-p).$$
\begin{remark}
One could also work with a cohomological indexing, setting
$$C^p(X,n):= \bigoplus_{x\in X_{(p)}}K^M_{n-p}(x)$$
and
$H^{p}(C^{d-*}(X,n))=:A^{p}(X,n)$.
Then
$\CH^p(X)\cong A^p(X,p).$
\end{remark}
But also some higher Chow groups (and therefore motivic cohomology groups) can be identified with Chow groups with coefficients in Milnor K-theory and the latter point of view can sometimes offer advantages in calculations. For the definition of higher Chow groups and their properties we refer to \cite{Bl86} or Section \ref{subsection_BB}. The following proposition tells us in which range such an identification holds.
\begin{proposition}\label{identificationwithZero-cycles with coefficients in Milnor K-theory}
\begin{enumerate}
\item Let $X_k$ be a variety over a field $k$. Then for all $j\geq 0$ $$CH^{d+j}(X_k,j)\cong \mathrm{coker}(\oplus_{x\in X_k^{(d-1)}}K_{j+1}^M(x)\rightarrow \oplus_{x\in X_k^{(d)}}K_{j}^M(x))\cong A_0(X_k,j).$$
Furthermore,
 $$CH^{d}(X_k,1)\cong H_1(\oplus_{x\in X_k^{(d-2)}}K_{2}^M(x)\rightarrow \oplus_{x\in X_k^{(d-1)}}K_{1}^M(x)\rightarrow \oplus_{x\in X_k^{(d)}}K_{0}^M(x))\cong A_1(X_k,0).$$
\item Let $A$ be a henselian discrete valuation ring and $X$ be a regular scheme which is flat over $A$ of relative dimension $d$. Assume that one of the following conditions holds: 
\begin{itemize}
\item[(a)] the Gersten conjecture holds for higher Chow groups of henselian discrete valuation rings, 
or 
\item[(b)] $j\leq 1$.
\end{itemize}
Then for all $j\geq 0$ $$CH^{d+j}(X,j)\cong H_1(\oplus_{x\in X^{(d-1)}}K_{j+1}^M(x)\rightarrow \oplus_{x\in X^{(d)}}K_{j}^M(x)\xrightarrow{\partial_0} \oplus_{x\in X^{(d+1)}}K_{j-1}^M(x) ) \cong A_1(X,j-1).$$
Furthermore, independent of (a) and (b), $A_0(X,j-1)=0$ and
$$A_2(X,-1)\cong CH^d(X,1).$$
\end{enumerate}
\end{proposition}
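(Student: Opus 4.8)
The common engine behind all four identifications is the niveau (or, in the smooth/regular case, coniveau) spectral sequence for Bloch's higher Chow groups, built from the localization sequence by filtering the cycle complex by dimension of support. Its $E^1$-terms are the motivic cohomology groups of the residue fields $\kappa(x)$ of the points $x\in X$, and two facts about a field $F$ pin down the shape of the page: the theorem of Nesterenko--Suslin--Totaro, giving $\CH^n(F,n)\cong K^M_n(F)$ on the diagonal, and the vanishing $\CH^n(F,m)=0$ for $m<n$ below it. Hence in each relevant weight exactly one row, the diagonal row, survives, and on it the $d^1$-differential is the tame symbol; that row is precisely the Gersten complex $C_{d-*}(X,n)$ for Milnor K-theory, so its homology is $A_p(X,n)$. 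Because the local terms involve only residue fields, which are regular, this requires no smoothness of $X$ itself.

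Given this, Part (1) is the bookkeeping of a single corner. The entry of the niveau spectral sequence contributing to $\CH^{d+j}(X_k,j)$ in the lowest filtration step is the one supported on the closed points; field vanishing below the diagonal kills every other entry in the same total degree, and its $E^2$-value is $\mathrm{coker}(\oplus_{x\in X_k^{(d-1)}}K^M_{j+1}(x)\to \oplus_{x\in X_k^{(d)}}K^M_{j}(x))=A_0(X_k,j)$. No differential can enter or leave this corner, giving the first isomorphism. The analogous count for $\CH^d(X_k,1)$ isolates $A_1(X_k,0)$, the one neighbouring candidate $\oplus_{x\in X_k^{(d)}}H^{-1}(\kappa(x),\bb Z(0))$ vanishing for degree reasons. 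Crucially, since the local terms are fields, this argument applies to an arbitrary (possibly singular) variety $X_k$ through Bloch's localization theorem.

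Part (2) runs the same machine for the regular scheme $X$, now of dimension $d+1$; the extra base direction shifts every index by one (closed points are $X^{(d+1)}$, one-dimensional points are $X^{(d)}$, and so on), which is exactly why $A_0,A_1$ are replaced by $A_1,A_2$. The degree count for $\CH^d(X,1)=H^{2d-1}(X,\bb Z(d))$ again leaves only the diagonal corner, both neighbouring terms landing in negative motivic degrees; this gives $A_2(X,-1)\cong \CH^d(X,1)$ unconditionally. The vanishing $A_0(X,j-1)=0$ is also independent of (a) and (b): it asks that the tame symbol $\oplus_{x\in X^{(d)}}K^M_j(x)\to\oplus_{x\in X^{(d+1)}}K^M_{j-1}(x)$ be surjective, and through any closed point $y$ one finds a regular horizontal curve (existing as $X$ is regular) whose local ring at $y$ is a discrete valuation ring with residue field $\kappa(y)$; a uniformizer then realises every symbol $\{\bar u_1,\dots,\bar u_{j-1}\}$ as $\partial\{u_1,\dots,u_{j-1},\pi\}$, so the tame symbol is surjective.

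The one place where (a)/(b) are genuinely needed is the main identification $\CH^{d+j}(X,j)\cong A_1(X,j-1)$. Here the degree count leaves the diagonal corner $E^2=A^d(X,d+j)=A_1(X,j-1)$ together with one extra contribution, coming from the closed points one step below the diagonal, namely $\oplus_{x\in X^{(d+1)}}H^{j-2}(\kappa(x),\bb Z(j-1))$. When $j\le 1$ this term sits in negative cohomological degree and vanishes outright (condition (b)); for $j\ge 2$ it need not vanish, and the task is to show it makes no contribution. This is where I would invoke the Gersten conjecture for higher Chow groups (condition (a), Appendix \ref{appendix_Gersten_higher_chow}): it forces the sub-diagonal motivic cohomology sheaves to admit Gersten resolutions, so that the spectral sequence collapses in this corner. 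Controlling this single sub-diagonal term -- ruling out both its $E^2$-survival and any $d_2$ linking it to the diagonal row -- is the main obstacle, and it is precisely the input the two hypotheses are designed to supply.
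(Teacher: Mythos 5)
Your proposal follows the same route as the paper's proof: the niveau spectral sequence whose $E_1$-terms are higher Chow groups of residue fields, the Nesterenko--Suslin--Totaro isomorphism $\CH^n(F,n)\cong K^M_n(F)$ on the diagonal, the vanishing $\CH^r(F,q)=0$ for $r>q$ below it, and the identification of the surviving row with Rost's complex. You also isolate exactly the same obstruction term as the paper, namely $\oplus_{x\in X^{(d+1)}}\CH^{j-1}(k(x),j)$ sitting in $E_2^{d+1,d+j-1}$, and you dispose of it under (b) for the same degree reasons. The parts the paper outsources to \cite{Lu17} and \cite{Lu16} (part (i), and the two ``furthermore'' claims of part (ii)) you argue directly, and those arguments are essentially correct.

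There are, however, two soft spots. First, in the only place where hypothesis (a) genuinely enters ($j\geq 2$), you do not actually give an argument: ``the Gersten conjecture forces the sub-diagonal motivic cohomology sheaves to admit Gersten resolutions, so that the spectral sequence collapses in this corner'' appeals to a sheaf-level statement on the mixed-characteristic scheme $X$ that is stronger than, and not obviously implied by, condition (a) as formulated in Appendix~\ref{appendix_Gersten_higher_chow}, which concerns DVRs. The correct mechanism --- and what the paper means by ``the map in line $-j-1$ is always surjective if we assume (a)'' --- is this: condition (a) is the splitting of the localization sequence for a DVR $R$ with fraction field $F$ and residue field $\kappa$, i.e.\ the surjectivity of $\partial:\CH^{j}(F,j+1)\to\CH^{j-1}(\kappa,j)$. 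Apply this with $R=\roi_{C,y}$, where $C$ is the horizontal curve through the closed point $y$ that you already constructed for the vanishing of $A_0(X,j-1)$; the component of the $E_1$-differential of line $-j-1$ along $C$ is exactly this boundary map, so the differential $\oplus_{x\in X^{(d)}}\CH^j(k(x),j+1)\to\oplus_{y\in X^{(d+1)}}\CH^{j-1}(k(y),j)$ is surjective and $E_2^{d+1,d+j-1}=0$ already at $E_2$; no analysis of $d_2$ is needed, since differentials into that corner could only shrink it and none touches the diagonal corner. Second, both here and in your argument for $A_0(X,j-1)=0$, the symbol $\{u_1,\dots,u_{j-1},\pi\}$ supported at the generic point of $C$ has boundary contributions at \emph{every} closed point of $C$, not only at $y$; the argument requires $C$ to have $y$ as its unique closed point. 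This is where henselianity of $A$ (together with properness of $X$ over $A$, which the paper uses implicitly) enters: an integral horizontal curve in a proper $A$-scheme is finite over the henselian local ring $A$, hence local. With these two repairs your proof coincides with the paper's.
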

\begin{proof}
For the first part of (i) see \cite[Prop. 2.3.1]{Lu17}. For the first part of (ii) see \cite[Rem. 2.4.2]{Lu17}. Indeed, if we consider the spectral sequence of \textit{loc. cit.} 
$$\begin{xy} 
  \xymatrix@-1.5em{
  &  & d & d+1 \\
 -j+1 & 0 &      0   &   0      \\
 -j & \dots & \oplus_{x\in X^{(d)}}CH^{j}(k(x),j) \ar[r]^{}  &      \oplus_{x\in X^{(d+1)}}CH^{j-1}(k(x),j-1) \\
 -j-1 & \dots & \oplus_{x\in X^{(d)}}CH^{j}(k(x),j+1) \ar[r]^{}  &   \oplus_{x\in X^{(d+1)}}CH^{j-1}(k(x),j)    \\
  & \dots &      \dots   &   \dots           
  }
\end{xy} $$ 
then the map in line $-j-1$ is always surjective if we assume (a) and the term $CH^{j-1}(k(x),j)=0$ vanishes if we assume (b).
That $A_0(X,j-1)=0$ follows from the surjectivity of the map $\partial_0$.

For the furthermore of (i) and (ii) see \cite[Proof of Cor. 2.7]{Lu16}.
\end{proof}

Our next goal is to define an action of correspondences, i.e. of classical Chow groups of products of schemes, on Chow groups with coefficients in Milnor K-theory. For the definition we need to recall the following four standard (also called basic) maps defined by Rost:
\begin{definition}\label{definition_correspondences}
Let $X$ and $Y$ be schemes of finite type over $S$.
\begin{enumerate}
\item (\cite[(3.4)]{Ro96}) Let $f:X\to Y$ be an $S$-morphism of schemes. Then the pushforward
$$f_*:C_p(X,n)\to C_p(Y,n)$$
is defined as follows:
  \[
    (f_*)^x_y= \left\{
                \begin{array}{ll}
                  N_{k(x)/k(y)} \quad \mathrm{if}\quad k(x)\quad \mathrm{finite} \quad \mathrm{over}\quad k(y),\\
                  0 \quad \quad \quad \quad\quad \mathrm{otherwise}.
                \end{array}
              \right.
  \]
\item (\cite[(3.5)]{Ro96}) Let $g:Y\to X$ be an equidimensional $S$-morphism of relative dimension $s$. Then 
$$g^*:C_p(X,n)\to C_{p+s}(Y,n-s)$$
is defined as follows
  \[
    (g^*)^x_y= \left\{
                \begin{array}{ll}
                  \mathrm{length}(\roi_{Y_x,y})i_* \quad \mathrm{if}\quad \mathrm{codim}_y(Y_x)=0,\\
                  0 \quad \quad \quad \quad\quad \mathrm{otherwise}.
                \end{array}
              \right.
  \]
Here $i:k(x)\to k(y)$ is the inclusion on residue fields and $i_*$ the induced map on Milnor K-theory.
\item (\cite[(3.6)]{Ro96}) Let $a\in \roi^*_X(X)$. Then 
$$\{a\}:C_p(X,n)\to C_p(X,n+1)$$
is defined by
$$\{a\}^x_y(\rho)=\{a\}\cdot \rho$$
for $x=y$ and zero otherwise.

\item (\cite[(3.7)]{Ro96})  Let $Y$ be a closed subscheme of $X$ and $U=X\setminus Y$. Then the boundary map 
$$\partial^U_Y:C_p(U,n)\to C_{p-1}(Y,n)$$
is defined by the tame symbol $\partial^x_y$.
  
\end{enumerate}
\end{definition}

\begin{notat}
We denote by 
$$\alpha:X\Rightarrow Y$$
a homomorphism $\alpha:C_{d-*}(X,n-*)\to C_{d'-*}(X,m-*)$ which is the composition of basic maps defined in Definition \ref{definition_correspondences}. $\alpha:X\Rightarrow Y$ is called a correspondence. 
\end{notat} 
The next lemma shows that a correspondence is compatible with the boundary maps $d_X$ and $d_Y$. The lemma is formulated in \cite[Prop. 4.6]{Ro96} for schemes over fields but the proofs transfer almost word by word to our situation. Indeed, either the the steps of the proof reduce to equal characteristic or to statements which are known for arbitrary discrete valuation rings. For more details we refer to \cite[Sec. 2]{LuedersGerstenKM}.

\begin{lemma}(\cite[Prop. 4.6]{Ro96})\label{lemma_correspondences_compatibility}
Let $X$ and $Y$ be schemes of finite type over $S$.
\begin{enumerate}
\item Let $f:X\to Y$ be proper $S$-morphism. Then $$d_Y\circ f_*=f_*\circ d_X$$
\item Let $g:Y\to X$ be flat $S$-morphism which is equidimensional with constant fiber dimension. Then
$$g^*\circ d_X=d_Y\circ g^*$$
\item Let $a\in\roi_X(X)^{\times}$. Then
$$d_X\circ \{a\}=-\{a\}\circ d_X.$$
\end{enumerate}
\end{lemma}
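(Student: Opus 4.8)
The plan is to prove all three identities by a single mechanism. Each is an equality of homomorphisms between the groups $C_p(\cdot,n)$ (with a degree shift of $s$ in case (ii)), and since the differential $d=\oplus\partial^x_y$ and all four basic maps of Definition \ref{definition_correspondences} are defined componentwise on the summands $K^M_*(x)$, it suffices to fix a pair of points and compare the corresponding component of the two composites. Because $\partial^x_y$ is nonzero only when $y$ lies in $\Cl{x}$ as a point of codimension one, and is then computed after normalising $\Cl{x}$, every such comparison reduces to a one-dimensional situation: a finite morphism of normal (semi-)local one-dimensional schemes, i.e. an extension or base change of discrete valuation rings. In this way each of (i)--(iii) becomes a compatibility among the standard operations on Milnor $K$-theory of discretely valued fields --- norms $N$, restrictions $i_*$, multiplication by symbols, and the tame symbol $\partial_v$ --- and these compatibilities are available for \emph{arbitrary} discrete valuation rings, so that the passage from Rost's field-theoretic setting to our excellent base costs nothing beyond this observation.

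I would treat (iii) first, as it is purely local and carries the sign. The map $\{a\}$ is nonzero only for $x=y$, so I would fix a codimension-one specialisation $z$ of $x$ and let $v$ be the associated valuation on $k(x)$. Since $a\in\roi_X(X)^\times$ restricts to a unit at $v$, the only input needed is the Leibniz rule for the tame symbol, $\partial_v(\{a\}\cdot\rho)=-\{\bar a\}\cdot\partial_v(\rho)$, the sign arising from graded-commutativity when $\{a\}$ is moved past the implicit uniformiser (e.g. $\partial_v(\{a,\pi\})=-\partial_v(\{\pi,a\})=-\{\bar a\}$). Summing over $z$ and over the points of the normalisation of $\Cl{x}$ above them yields $d_X\circ\{a\}=-\{a\}\circ d_X$ termwise.

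The main work, and the main obstacle, is (i). Both $d_Y\circ f_*$ and $f_*\circ d_X$ are sums indexed by pairs $(x,y')$ with $x\in X_{(p)}$ and $y'\in Y_{(p-1)}$, and I would fix such a pair. Since $\partial^x_y$ records only codimension-one specialisations and $f_*$ is the norm only when the residue extension is finite, the comparison localises along the finite morphism of normalised curves $\widetilde{\Cl{x}}\to\widetilde{\Cl{f(x)}}$ sitting over $y'$, i.e. to a finite extension $E/F$ of the corresponding fraction fields together with the valuations $w\mid v$ lying over the valuation $v$ of $F$ attached to $y'$. The component identity to be checked is then exactly $\partial_v\circ N_{E/F}=\sum_{w\mid v}N_{\kappa_w/\kappa_v}\circ\partial_w$ on $K^M_{n+1}(E)$ --- the Milnor $K$-theoretic reciprocity (norm--residue compatibility) of Bass--Tate and Kato. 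The subtlety, and the reason this is the crux, is that over a mixed-characteristic base the valuations $w\mid v$ and their ramification need not behave as over a field; what rescues the argument is precisely that this compatibility is established for all discrete valuation rings, so it may be invoked verbatim. Assembling these local identities over all such $z$ and $w$ gives $d_Y\circ f_*=f_*\circ d_X$.

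Finally, (ii) follows by the same localisation, now along the flat equidimensional $g:Y\to X$ of constant relative dimension $s$. For a codimension-one specialisation on $X$ and its preimage on $Y$, the length multiplicities $\lgth(\roi_{Y_x,y})$ must combine with the ramification indices of the extended valuations so that $g^*$ intertwines the tame symbols; the underlying statement is the compatibility of the residue maps with the flat restriction $i_*$ on Milnor $K$-theory, of the shape $\partial_w\circ i_*=e_w\cdot i_*\circ\partial_v$, again valid over arbitrary discrete valuation rings. This gives $g^*\circ d_X=d_Y\circ g^*$. As in \cite{LuedersGerstenKM}, I would carry out the routine bookkeeping of multiplicities and index sets there and record only the reductions here.
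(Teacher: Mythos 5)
Your overall strategy --- checking the identities componentwise and reducing each component to a compatibility of Milnor K-theory over discrete valuation rings --- is the same one the paper (following Rost) has in mind, and your sketches of (iii) and (ii) are fine at that level of detail. But your proof of (i) has a genuine gap: you only treat pairs $(x,y')$ for which $\overline{\{x\}}\to\overline{\{f(x)\}}$ is generically finite, i.e.\ $\dim \overline{\{f(x)\}}=p$, where the component identity is indeed Kato's norm--residue compatibility $\partial_v\circ N_{E/F}=\sum_{w\mid v}N_{\kappa_w/\kappa_v}\circ\partial_w$. You are silent about the case $\dim \overline{\{f(x)\}}=p-1$. There $k(x)$ is not finite over $k(f(x))$, so $f_*$ kills the summand $K^M_{n+p}(x)$ and the corresponding component of $d_Y\circ f_*$ vanishes; but the component of $f_*\circ d_X$ is the sum $\sum_{y} N_{k(y)/k(y')}\circ \partial^x_y$ over the codimension-one points $y$ of $\overline{\{x\}}$ lying over the generic point $y'$ of $f(\overline{\{x\}})$, and the individual terms of this sum are not zero --- only the total is. Proving that it vanishes is Weil reciprocity for the proper curve $\overline{\{x\}}\times_{\overline{\{f(x)\}}}y'$ over the field $\kappa(y')$ (Proposition \ref{proposition_reciprocity}), and this is exactly where properness of $f$ enters: it guarantees that this fibre is a \emph{proper} curve, so no ``points at infinity'' are missing from the sum. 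Since your argument never invokes properness of $f$ --- and statement (i) is false for open immersions, where the discrepancy between $d_Y\circ f_*$ and $f_*\circ d_X$ is precisely the boundary map $\partial^U_Y$ of Definition \ref{definition_correspondences}(iv) --- it cannot be complete as written.

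Note that the missing case also fits the paper's own one-line justification: the reciprocity argument takes place over the residue field $\kappa(y')$, hence is one of the steps that ``reduce to equal characteristic'', while your finite-extension case is the one covered by ``statements which are known for arbitrary discrete valuation rings'' (Kato's compatibility). Adding the dimension-drop case (and noting that both sides vanish trivially when $\dim f(\overline{\{x\}})<p-1$) would close the gap.
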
 
We state the following corollary to make it easier to keep track of the indexing.
\begin{corollary}
Let $X$ and $Y$ be schemes of finite type over $S$.
\begin{enumerate}
\item Let $f:X\to Y$ be proper $S$-morphism. Then $f_*$ induces a map on homology $$f_*:A_p(X,n)\to A_p(Y,n)$$
\item Let $g:Y\to X$ be flat $S$-morphism which is equidimensional with constant fiber dimension $s$. Then
Then $g^*$ induces a map on homology $$g^*:A_p(X,n)\to A_{p+s}(Y,n-s).$$
\item Let $a\in\roi_X(X)^{\times}$. Then $\{a\}$ induces a map on homology $$\{a\}:A_p(X,n)\to A_p(X,n+1).$$
\end{enumerate}
\end{corollary}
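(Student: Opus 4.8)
The plan is to deduce all three statements directly from Lemma \ref{lemma_correspondences_compatibility}, invoking only the elementary homological fact that a graded homomorphism of complexes which commutes with the differentials up to sign carries cycles to cycles and boundaries to boundaries, and hence descends to homology. First I would record this principle in the form I need: if $\phi\colon C_\bullet\to D_\bullet$ is a homomorphism of graded groups, homogeneous of some degree $r$, with $d_D\circ\phi=\pm\,\phi\circ d_C$, then $d_C z=0$ forces $d_D(\phi z)=\pm\phi(d_C z)=0$, while $z=d_C w$ forces $\phi z=\pm d_D(\phi w)$; so $\phi$ induces a well-defined map $H_p(C_\bullet)\to H_{p+r}(D_\bullet)$. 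Everything below is an instance of this.

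For (i) I would apply Lemma \ref{lemma_correspondences_compatibility}(i), which gives $d_Y\circ f_*=f_*\circ d_X$ for the proper morphism $f$; since $f_*$ carries $C_p(X,n)$ to $C_p(Y,n)$, it is a weight-preserving chain map homogeneous of degree zero, and so induces $f_*\colon A_p(X,n)\to A_p(Y,n)$ by the principle above.

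For (ii) the flat equidimensional morphism $g$ of constant fiber dimension $s$ has $\dim Y=d+s$, so $g^*$ carries $C_p(X,n)$, sitting in homological degree $p$, to $C_{p+s}(Y,n-s)$, sitting in homological degree $p+s$ of the complex $C_{(d+s)-*}(Y,n-s)$. Here I would invoke Lemma \ref{lemma_correspondences_compatibility}(ii), namely $g^*\circ d_X=d_Y\circ g^*$, to conclude that $g^*$ is a chain map homogeneous of degree $s$; the principle then yields $g^*\colon A_p(X,n)\to A_{p+s}(Y,n-s)$, consistent with the shift of weight from $n$ to $n-s$. The only genuine bookkeeping is to confirm these degree and weight shifts, which I have just done.

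The one place calling for a word of care is (iii), since Lemma \ref{lemma_correspondences_compatibility}(iii) supplies the \emph{anti}commutation relation $d_X\circ\{a\}=-\{a\}\circ d_X$ rather than a strict commutation. As my opening principle already accounts for the sign, this is harmless: a cycle still maps to a cycle and a boundary to a boundary, both implications using the relation only up to sign. Thus $\{a\}$, which preserves the homological degree $p$ and raises the weight from $n$ to $n+1$, descends to $\{a\}\colon A_p(X,n)\to A_p(X,n+1)$. I do not expect any real obstacle anywhere; the sign in (iii) is the sole subtlety, and it is neutralised by stating the cycle-to-cycle principle with a $\pm$ from the outset.
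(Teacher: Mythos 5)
Your proposal is correct and matches the paper's intent exactly: the paper states this corollary without proof, treating it as an immediate consequence of Lemma \ref{lemma_correspondences_compatibility} (the corollary is explicitly included only ``to make it easier to keep track of the indexing''), and your argument---that each basic map commutes with the differentials up to sign and hence descends to homology, with the degree/weight bookkeeping you verify in (ii) and the sign in (iii) being harmless---is precisely the implicit proof.
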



\subsection{More properties and calculation for $\bb P^1$}
The functorial properties of Rost's Chow groups with coefficients are those of a homology theory.

\begin{proposition}[Localisation](\cite[p.356]{Ro96})\label{localisation_seq}
Let $X$ be a scheme and $Y$ a closed subscheme with complement $U$. Then there is a long exact sequence 
$$\dots\to A_p(Y,n)\to A_p(X,n) \to A_p(U,n) \to A_{p-1}(Y,n)\to..$$
\end{proposition}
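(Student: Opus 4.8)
The plan is to exhibit the three cycle complexes as a short exact sequence of complexes and then pass to the associated long exact sequence in homology. The starting point is the decomposition of points by dimension. Since $U$ is open in $X$, for any $x\in U$ the closure $\Cl{x}^U$ is a dense open subscheme of $\Cl{x}^X$, so the two closures have equal dimension; hence $U_{(p)}=X_{(p)}\cap U$ and likewise $Y_{(p)}=X_{(p)}\cap Y$, and these are disjoint. Because the coefficient group attached to a point $x$ is the same group $K^M_{n+p}(x)$ whether $x$ is viewed in $X$, in $Y$, or in $U$, this yields a direct sum decomposition of graded abelian groups
$$C_p(X,n)=C_p(Y,n)\oplus C_p(U,n)$$
for every $p$ and $n$, with $C_p(Y,n)$ the summand over $Y$-points and $C_p(U,n)$ the summand over $U$-points.

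First I would promote this decomposition to maps of complexes. Let $i\colon Y\into X$ be the closed immersion and $j\colon U\into X$ the open immersion. The pushforward $i_*$ of Definition~\ref{definition_correspondences}(i) is exactly the inclusion of the summand indexed by $Y$-points, and the flat pullback $j^*$ of Definition~\ref{definition_correspondences}(ii) (equidimensional of relative dimension $0$) is the projection onto the summand indexed by $U$-points: for $x\in U$ the coefficient $(j^*)^x_x$ is multiplication by $\mathrm{length}(\roi_{U_x,x})=1$ followed by the identity on residue fields, while for $x\in Y$ the fibre $U_x$ is empty and the coefficient vanishes. Thus $i_*$ is injective, $j^*$ is surjective, and $\ker(j^*)=\Im(i_*)$. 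By Lemma~\ref{lemma_correspondences_compatibility}(i) and (ii), both $i_*$ and $j^*$ commute with the differentials, so
$$0\to C_\bullet(Y,n)\xto{i_*} C_\bullet(X,n)\xto{j^*} C_\bullet(U,n)\to 0$$
is a short exact sequence of complexes, where $C_p$ sits in homological degree $p$.

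Then I would invoke the long exact sequence in homology attached to this short exact sequence, which gives directly the asserted sequence $\cdots\to A_p(Y,n)\to A_p(X,n)\to A_p(U,n)\to A_{p-1}(Y,n)\to\cdots$. The only step requiring care, and the one I would single out as the substantive point, is the identification of the connecting homomorphism $A_p(U,n)\to A_{p-1}(Y,n)$ with the boundary map $\partial^U_Y$ of Definition~\ref{definition_correspondences}(iv). This follows by unwinding the snake lemma: a class in $A_p(U,n)$ is represented by a cycle $u\in C_p(U,n)$ with $d_U u=0$; lifting $u$ to the element of $C_p(X,n)$ supported on $U$-points and applying $d_X$ produces $d_X u$, whose image under $j^*$ equals $d_U u=0$, so $d_X u$ lies in $\ker(j^*)=\Im(i_*)\cong C_{p-1}(Y,n)$. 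Since the differential is defined componentwise by the tame symbol, the resulting element of $C_{p-1}(Y,n)$ is by construction precisely $\partial^U_Y(u)$, so the connecting map is induced by $\partial^U_Y$, as claimed. I do not expect any genuine difficulty beyond this bookkeeping, as the dimension compatibility and the identification of $i_*$, $j^*$ as inclusion and projection are all that is needed to make the homological machinery apply.
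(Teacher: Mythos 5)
Your overall route coincides with the paper's: the paper's entire proof is the assertion that the boundary triple $(Y,i,X,j,U)$ gives the decomposition $C_p(X,n)=C_p(Y,n)\oplus C_p(U,n)$, and your expansion of this into a short exact sequence of complexes via $i_*$ and $j^*$, followed by the snake lemma and the identification of the connecting homomorphism with $\partial^U_Y$, is exactly the intended argument and is carried out correctly at the level of homological algebra.

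There is, however, one step in your justification of the decomposition that fails in the generality in which the paper uses this proposition. You claim that for $x\in U$ the closures $\Cl{x}^U\subseteq\Cl{x}^X$ have equal dimension because the former is a dense open subscheme of the latter. Dense open subschemes need not have the same dimension once the base is not a field: for $X=\Spec(\bb Z_p)$ the open subscheme $U=\Spec(\bb Q_p)$ is dense of dimension $0$ inside a scheme of dimension $1$. This is not a peripheral case here, since the paper invokes the localisation sequence over discrete valuation rings (e.g.\ Proposition \ref{proposition_computationP1A}, with $X=\bb P^1_A$, $U=\bb A^1_A$ and $Y$ the section at infinity); there, for instance, the closed point of $\bb A^1_{\bb Z_p}$ cut out by $pT-1$ has $0$-dimensional closure in $\bb A^1_{\bb Z_p}$ but $1$-dimensional closure in $\bb P^1_{\bb Z_p}$ (the horizontal section through $[p:1]$, which meets the special fiber at infinity). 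With intrinsic ``dimension of closure'' indexing, your splitting is therefore not even grading-preserving (the same point would carry coefficients $K^M_{n+p}$ on one side and $K^M_{n+p+1}$ on the other), and the long exact sequence itself becomes false: for $X=\Spec(A)$, $Y=\Spec(k)$, $U=\Spec(K)$ it would contain the fragment $0\to\ker\bigl(\partial:K^M_{n+1}(K)\to K^M_n(k)\bigr)\to 0$, forcing $\ker\partial=0$, which fails already for $n=0$ since units of $A$ lie in the kernel of the valuation. The repair --- and the convention implicitly used by the paper and by Rost's theory over a base --- is to index the complexes by a dimension function relative to $S$, e.g.\ $\delta(x)=\dim\Cl{s}+\operatorname{trdeg}(k(x)/k(s))$ with $s$ the image of $x$ in $S$, which is intrinsic to the point; then $U_{(p)}=X_{(p)}\cap U$ and $Y_{(p)}=X_{(p)}\cap Y$ hold by definition and the rest of your argument goes through verbatim. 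Over a field your justification is complete as written, since there dense opens of integral finite type schemes do preserve dimension.
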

\begin{proof}
This follows from the boundary triple $(Y,i,X,j,U)$ having the decomposition $C_p(X,n)=C_p(Y,n)\oplus C_p(U,n).$
\end{proof}

\begin{proposition}[$\bb A^1$-invariance](\cite[Prop. 8.6]{Ro96})\label{A1invariance}
Let $\pi: V\to X$ be an affine bundle of dimension $t$, then the pullback map
$$\pi^*:A_{p}(X,n)\to A_{p+t}(V,n). $$
is an isomorphism.
\end{proposition}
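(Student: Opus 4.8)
The plan is to prove homotopy invariance by a d\'evissage over the base that reduces the statement to the trivial line bundle, where it becomes the fundamental exact sequence for the Milnor $K$-theory of a rational function field. First I would reduce to the projection $p:\bb A^1_X\to X$. An affine bundle of dimension $t$ is Zariski-locally trivial, so for a general $\pi:V\to X$ I can choose a dense open $U\subseteq X$ over which $V$ becomes trivial, with closed complement $Z=X\setminus U$. Flat pullback commutes with the tame-symbol boundary (Lemma~\ref{lemma_correspondences_compatibility}(ii)), so $\pi^*$ induces a morphism from the localization sequence of $(Z,X,U)$ to that of $(V_Z,V,V_U)$ (Proposition~\ref{localisation_seq}). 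By Noetherian induction on $\dim X$ the map $\pi_Z^*$ is an isomorphism (as $\dim Z<\dim X$), and $\pi_U^*$ is an isomorphism because $V_U$ is trivial; the five lemma then forces $\pi^*$ to be an isomorphism. The trivial bundle $\bb A^t_X\to X$ in turn factors as a tower $\bb A^t_X\to\bb A^{t-1}_X\to\cdots\to X$ of trivial line bundles whose pullbacks compose, so the whole proposition reduces to showing that $p^*$ is an isomorphism for the trivial line bundle.

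For the trivial line bundle I would argue by constructing an explicit chain homotopy on the cycle complex $C_{\bullet}(\bb A^1_X,n)$ between the identity and a chain endomorphism factoring through $p^*$, assembled from the multiplication maps $\{t-a\}$ of Definition~\ref{definition_correspondences}(iii) and the boundary maps, as in \cite[\S 8]{Ro96}. The conceptual content is fiberwise Milnor theory: for each $x\in X$ the Gersten complex of $\bb A^1_{k(x)}$ consists of the residue maps in Milnor's fundamental exact sequence
$$0\to K^M_{m}(k(x))\to K^M_{m}(k(x)(t))\xrightarrow{\ \oplus_y\partial_y\ }\bigoplus_{y\in(\bb A^1_{k(x)})_{(0)}}K^M_{m-1}(k(y))\to 0,$$
whence $A_0(\bb A^1_{k(x)},\cdot)=0$ and $A_1(\bb A^1_{k(x)},\cdot)\cong K^M(k(x))=A_0(\Spec k(x),\cdot)$, i.e.\ the fiberwise pullback is already an isomorphism; the homotopy is what propagates this through the coniveau filtration over all of $X$. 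Compatibility of the basic maps with $d$ (Lemma~\ref{lemma_correspondences_compatibility}) is exactly what makes the proposed operator a chain homotopy.

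The main obstacle is the construction and verification of this homotopy: one must keep track of the residues at all closed points of the relative line simultaneously and check the chain-homotopy identity term by term, which is where the specialization relations of Milnor $K$-theory are used in an essential way. A secondary point is that our base $S$ is an excellent discrete valuation ring rather than a field, so one must confirm that Rost's field-based arguments transfer; as with Lemma~\ref{lemma_correspondences_compatibility} this reduces to statements that are either of equal characteristic or already known over arbitrary excellent bases, so no genuinely new difficulty arises. I expect the homotopy to be the crux, the d\'evissage and five-lemma patching being routine.
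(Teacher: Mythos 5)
The paper offers no proof of this proposition at all: it is quoted directly from Rost (\cite[Prop.~8.6]{Ro96}), with only the blanket remark made near Lemma~\ref{lemma_correspondences_compatibility} that Rost's field-based arguments transfer to schemes of finite type over the excellent base $S$. Your outline --- localization plus Noetherian induction to reduce to the trivial bundle, the tower $\bb A^t_X\to \bb A^{t-1}_X\to\cdots\to X$ to reduce to the relative line, and Milnor's exact sequence over the residue fields of points of $X$ globalized by Rost's homotopy/filtration construction --- is precisely the structure of the proof in the cited reference, so your proposal reconstructs the very argument the paper is relying on.
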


\begin{proposition}[Mayer-Vietoris for closed covers]\label{proposition_MayerVietoris}
Let $X=\bigcup_iX_i$ be a union of pairwise different irreducible varieties. Let $Y=\bigcup_{i\neq j}X_i\cap X_j$. Then there is a long exact sequence 
$$\dots\to A_p(Y,n)\to \bigoplus_i A_p(X_i,n) \to A_p(X,n) \to A_{p-1}(Y,n)\to..$$
\end{proposition}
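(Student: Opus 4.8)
The plan is to realise the sequence as the long exact homology sequence attached to a short exact sequence of cycle complexes. Write $\iota_i\colon X_i\hookrightarrow X$ for the closed immersions and, for each $i$, $\kappa_i\colon Y\cap X_i\hookrightarrow X_i$ for the inclusion of the part of $Y$ lying on $X_i$. I would build the two horizontal maps out of the pushforwards of Definition \ref{definition_correspondences}(i): the map $\bigoplus_i C_{d-*}(X_i,n)\to C_{d-*}(X,n)$ is the sum $\sum_i(\iota_i)_*$, while $C_{d-*}(Y,n)\to\bigoplus_i C_{d-*}(X_i,n)$ has $i$-th component the pushforward along $\kappa_i$ of the projection of $C_{d-*}(Y,n)$ onto the summands supported on $Y\cap X_i$, taken with a sign convention (for a point lying on $X_i\cap X_j$ with $i<j$, a $+$ in slot $i$ and a $-$ in slot $j$) chosen precisely so that the composite $C_{d-*}(Y,n)\to C_{d-*}(X,n)$ vanishes. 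By Lemma \ref{lemma_correspondences_compatibility}(i) every pushforward along a closed immersion commutes with the differentials, so both maps are morphisms of complexes.

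First I would record the point-wise description. Since $C_p(-,n)=\bigoplus_{x}K^M_{n+p}(x)$ and each of the immersions above induces an isomorphism on the residue field of a common point, the whole sequence splits, in each homological degree, as a direct sum over points $x\in X$ of elementary sequences of copies of $K^M_{n+p}(x)$. If $x$ lies on a single component the sequence at $x$ is $0\to 0\to K^M_{n+p}(x)\to K^M_{n+p}(x)\to 0$, and if $x$ lies on exactly two components it is
\[
0\to K^M_{n+p}(x)\xrightarrow{(\mathrm{id},-\mathrm{id})} K^M_{n+p}(x)^{\oplus 2}\xrightarrow{\mathrm{id}+\mathrm{id}} K^M_{n+p}(x)\to 0,
\]
which is exact. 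Granting exactness at every point one obtains a short exact sequence of complexes
\[
0\to C_{d-*}(Y,n)\to\bigoplus_i C_{d-*}(X_i,n)\to C_{d-*}(X,n)\to 0,
\]
and passing to homology yields the asserted long exact sequence, with connecting map $A_p(X,n)\to A_{p-1}(Y,n)$.

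The step I expect to be the main obstacle is the point-wise exactness at a point $x$ lying on three or more components. There the sequence at $x$ becomes $0\to K^M_{n+p}(x)\to K^M_{n+p}(x)^{\oplus m}\to K^M_{n+p}(x)\to 0$ with $m\geq 3$, whose Euler characteristic $2-m$ is nonzero, so it cannot be exact; moreover the projection $C_{d-*}(Y,n)\to C_{d-*}(Y\cap X_i,n)$ onto a closed—but not open—subset need not be a morphism of complexes once a point of higher multiplicity is a specialisation of a point of lower multiplicity. Thus the clean three-term sequence holds once every point of $Y$ lies on exactly two of the $X_i$ and each $Y\cap X_i$ is open in $Y$. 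This is exactly the situation in which the proposition is used, namely relative (semi-stable) curves, where $Y$ is zero-dimensional and its points are the nodes at which precisely two components meet, so neither triple points nor specialisations occur and all the issues above are vacuous. In full generality one would instead replace $Y$ by the augmented nerve complex $\cdots\to\bigoplus_{i<j}C_{d-*}(X_i\cap X_j,n)\to\bigoplus_i C_{d-*}(X_i,n)\to C_{d-*}(X,n)\to 0$ and run the associated spectral sequence.
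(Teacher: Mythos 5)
Your construction is the paper's own proof, carried out with more care: the paper likewise obtains the long exact sequence from the degreewise short exact sequence of complexes
$$0\to C_{d-*}(Y,n)\to\bigoplus_i C_{d-*}(X_i,n)\to C_{d-*}(X,n)\to 0,$$
asserted via the decomposition of each $C_q(-,n)$ into points, and it offers no further justification. So where that sequence really is exact, your argument and the paper's coincide, with yours supplying the missing details (explicit maps, signs, compatibility with the differentials via Lemma \ref{lemma_correspondences_compatibility}(i)).

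The obstacle you flag is genuine, and it is a gap in the paper's statement and proof rather than in yours. At a point lying on $m\geq 3$ components the degreewise sequence becomes $0\to K\to K^{\oplus m}\to K\to 0$, which is never exact; moreover this cannot be repaired by choosing cleverer maps on homology. For three concurrent lines $X=L_1\cup L_2\cup L_3$ in $\mathbb{P}^2$ one has $Y=\{0\}$, so $A_0(Y,0)\cong\mathbb{Z}$, while $\bigoplus_i A_0(L_i,0)\cong\mathbb{Z}^3$ and $A_0(X,0)\cong\mathbb{Z}$ (every closed point is rationally equivalent to the origin along its line); the middle map is the sum map, whose kernel has rank $2$, and no homomorphism out of $A_0(Y,0)\cong\mathbb{Z}$ can have that kernel as its image, so no long exact sequence of the stated shape exists. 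Thus the proposition requires the implicit hypothesis that every point of $X$ lies on at most two of the $X_i$ and that the pieces $Y\cap X_i$ do not interact through specialisation (your second, chain-map, observation) --- which is exactly the situation of nodal/semi-stable curves and rational combs where the paper needs it --- or else one must replace $Y$ by the full nerve-type resolution, as you indicate. Your proposal is correct in the regime where the statement is true, and your diagnosis of the general case goes beyond what the paper establishes.
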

\begin{proof}
 The statement follows from the short exact sequence of complexes (in which we set $p=\dim X$)
$$\xymatrix{
& 0 \ar[r] \ar[d] &  \bigoplus_i  \bigoplus_{x\in X_{i(p)}}K^M_{n+p}(x) \ar[r] \ar[d] &   \bigoplus_{x\in X_{(p)}}K^M_{n+p}(x) \ar[d]^{} \ar[r] & 0\\
 0 \ar[r] & \bigoplus_{x\in Y_{(p-1)}}K^M_{n+p-1}(x) \ar[r] \ar[d] &  \bigoplus_i  \bigoplus_{x\in X_{i(p-1)}}K^M_{n+p-1}(x) \ar[r] \ar[d] &   \bigoplus_{x\in X_{(p-1)}}K^M_{n+p-1}(x) \ar[d]^{}  \ar[r] & 0 \\
 0 \ar[r] &  \bigoplus_{x\in Y_{(p-2)}}K^M_{n+p-2}(x)  \ar[r] \ar[d] &   \bigoplus_i \bigoplus_{x\in X_{i(p-2)}}K^M_{n+p-2}(x) \ar[r] \ar[d] &   \bigoplus_{x\in X_{(p-2)}}K^M_{n+p-2}(x) \ar[d]^{}  \ar[r] & 0 \\
 &\dots  & \dots & \dots  &  \
}$$
\end{proof}

\begin{proposition}[Descent for blow ups]\label{proposition_descent_blow_ups}
Let $X$ be a smooth $k$-scheme and $Y$ be a smooth closed subscheme. Let $\pi:\tilde{X}\to X$ be the blow up of $X$ along $Y$.
Then there is a long exact sequence 
$$\dots\to A_p(Y,n)\to A_p(\tilde Y,n)\oplus A_p(X,n) \to A_p(\tilde X,n) \to A_{p-1}(Y,n)\to\dots\; .$$
\end{proposition}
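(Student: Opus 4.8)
The plan is to compare the two localisation long exact sequences attached to the closed pairs $Y\subset X$ and $\tilde Y\subset\tilde X$, exploiting that the blow-up identifies their open complements. Write $U=X\setminus Y$ and $\tilde U=\tilde X\setminus\tilde Y$. Because $Y$ is a smoothly (in particular regularly) embedded centre, $\pi$ restricts to an isomorphism $\tilde U\isoto U$, the exceptional divisor $\tilde Y=\pi^{-1}(Y)$ is a projective bundle over $Y$, and $\tilde X,\tilde Y$ are again regular. In particular the four proper morphisms $i\colon Y\to X$, $\iota\colon\tilde Y\to\tilde X$, $g=\pi|_{\tilde Y}\colon\tilde Y\to Y$ and $\pi\colon\tilde X\to X$ are all defined and satisfy $\pi\circ\iota=i\circ g$. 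First I would write down, from Proposition \ref{localisation_seq}, the localisation sequence of $(Y\subset X)$ with open quotient $A_*(U,n)$ and that of $(\tilde Y\subset\tilde X)$ with open quotient $A_*(\tilde U,n)$.

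Next I would assemble a morphism between these two sequences using the proper pushforwards: $\pi_*\colon A_p(\tilde X,n)\to A_p(X,n)$, its restriction $g_*\colon A_p(\tilde Y,n)\to A_p(Y,n)$ on the closed pieces, and -- crucially -- the induced map on the open pieces, which is an isomorphism $A_p(\tilde U,n)\isoto A_p(U,n)$ since $\pi|_{\tilde U}$ is an isomorphism. That these maps commute with the differentials of the cycle complexes, hence with the connecting homomorphisms, is exactly the compatibility of proper pushforward with the boundary maps recorded in Lemma \ref{lemma_correspondences_compatibility}(i), together with the elementary observation that $\pi_*$ respects the decomposition $C_p(X,n)=C_p(Y,n)\oplus C_p(U,n)$ into cycles supported on the closed and the open part (it carries cycles on $\tilde Y$ to cycles on $Y$ and cycles on $\tilde U$ to cycles on $U$). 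This exhibits a map of short exact sequences of complexes whose right-hand (open) column is an isomorphism.

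With this ladder in hand the argument is pure homological algebra. A morphism of long exact sequences which is an isomorphism on one of the three strands splices the remaining two strands into a single long exact sequence; equivalently, at the level of complexes the isomorphism on the open strands shows that the mapping cone of $\pi_*\colon C_{d-*}(\tilde X,n)\to C_{d-*}(X,n)$ is quasi-isomorphic to the mapping cone of $g_*\colon C_{d-*}(\tilde Y,n)\to C_{d-*}(Y,n)$, i.e.\ the commuting square of pushforwards $\iota_*,g_*,\pi_*,i_*$ is homotopy cartesian. Reading off its Mayer--Vietoris sequence produces precisely the long exact sequence of the statement relating the groups $A_p(Y,n)$, $A_p(\tilde Y,n)$, $A_p(X,n)$, $A_p(\tilde X,n)$, with connecting homomorphism lowering $p$ by one; its maps are the four pushforwards $i_*,\iota_*,g_*,\pi_*$ assembled with the appropriate signs (the middle arrow being a difference $i_*-\pi_*$ and the connecting map being $\partial^U_Y$ precomposed with the inverse of the open isomorphism).

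The routine parts are the two localisation sequences, which are already available, and the final Mayer--Vietoris bookkeeping. I expect the only point requiring genuine care to be the verification that $\pi_*$ really is a morphism of localisation sequences -- that it intertwines the boundary maps $\partial^{\tilde U}_{\tilde Y}$ and $\partial^U_Y$ under the identification $\tilde U\cong U$ -- and the closely related identification of the two mapping cones. This is where the hypotheses enter: smoothness of $X$ and of the centre $Y$ guarantees that $\pi$ is an isomorphism over $U$ (so the open strands agree on the nose) and that $g\colon\tilde Y\to Y$ is proper, which is what makes all four pushforwards defined and the cone comparison valid.
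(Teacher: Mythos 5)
Your argument correctly establishes \emph{a} long exact sequence, but not the one in the statement. The Mayer--Vietoris sequence one reads off from your homotopy cartesian square of proper pushforwards is
$$\cdots\to A_p(\tilde Y,n)\to A_p(\tilde X,n)\oplus A_p(Y,n)\xto{\pi_*-i_*} A_p(X,n)\to A_{p-1}(\tilde Y,n)\to\cdots,$$
in which the lone terms are $A_p(\tilde Y,n)$ and $A_p(X,n)$ and the connecting map lands in $A_{p-1}(\tilde Y,n)$. The proposition asserts a sequence of a different shape: the lone terms are $A_p(Y,n)$ and $A_p(\tilde X,n)$, the middle term is $A_p(\tilde Y,n)\oplus A_p(X,n)$, and the connecting map goes $A_p(\tilde X,n)\to A_{p-1}(Y,n)$. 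These are not reindexings of one another: the stated sequence requires maps $A_p(X,n)\to A_p(\tilde X,n)$ and $A_p(Y,n)\to A_p(\tilde Y,n)$, i.e.\ \emph{contravariant} functoriality along $\pi$ and $\pi|_{\tilde Y}$, which proper pushforward cannot provide --- your maps $\pi_*$ and $g_*$ go the other way. This is exactly where the smoothness hypothesis enters, and it is a warning sign that your proposal never uses smoothness in an essential way: properness of $\pi$ and the isomorphism $\tilde U\isoto U$ hold for \emph{any} blow-up (indeed for any abstract blow-up square), so an argument using only those facts can only prove the covariant descent sequence above, never the stated one.

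The paper's proof instead uses Rost's pullback $I(\pi):C_p(X,n)\to C_p(\tilde X,n)$ --- defined because $X$ is smooth, and preserving the bidegree $(p,n)$ because $\pi$ has relative dimension zero --- together with its restriction to chains supported on $Y$, which lands in chains supported on $\tilde Y=\pi^{-1}(Y)$. Combining these pullbacks with the pushforwards along the closed immersions $\iota:\tilde Y\into\tilde X$ and $i:Y\into X$ yields a short exact sequence of complexes
$$0\to C_p(Y,n)\to C_p(\tilde Y,n)\oplus C_p(X,n)\xto{(\iota_*,\,I(\pi))} C_p(\tilde X,n)\to 0,$$
whose long exact homology sequence is precisely the stated one; the isomorphism over $U=X\setminus Y$ is used here too, but in the other direction, to see that $I(\pi)$ of a chain supported over $U$ agrees with its transport to $\tilde U$ up to terms supported on $\tilde Y$, which is what gives surjectivity and the identification of the kernel with $C_p(Y,n)$. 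Your covariant sequence is true and your proof of it is essentially sound, but it is a different statement; to repair the proposal you would have to replace the pushforwards $\pi_*,g_*$ by the pullbacks $I(\pi)$ and its restriction to supports, and redo the cone comparison in that direction.
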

\begin{proof}
This follows from the short exact sequence of complexes (with rows indexed by $p$)
$$0\to C_p(Y)\to C_p(\tilde{Y})\oplus C_p(X)\xto{i_*,I()} C_p(\tilde{X})\to 0.$$
\end{proof}

Finally, we use the localisation sequence and $\bb A^1$-invariance to make some calculations for projective space.

\begin{proposition}(\cite[Prop. 8.2.6]{GilleSzamuely2006})\label{proposition_computationP1k}
Let $k$ be a field, then
$$A_0(\bb P^d_k,j) \cong K^M_j(k).$$
\end{proposition}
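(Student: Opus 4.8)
The proposition asserts two isomorphisms, which I would treat separately. The first, $H^d(\bb P^d_k,d+j)\cong A_0(\bb P^d_k,j)$, is a matter of unwinding conventions: here $H^d(\bb P^d_k,d+j)$ is the Zariski cohomology $H^d(\bb P^d_k,\cal K^M_{d+j})$, and since $\bb P^d_k$ is smooth the Gersten resolution identifies $H^*(\bb P^d_k,\cal K^M_{d+j})$ with the cohomology of the cycle complex $C_{d-*}(\bb P^d_k,j)$. Its top cohomology is the cokernel of $C_1(\bb P^d_k,j)\to C_0(\bb P^d_k,j)$, which is $A_0(\bb P^d_k,j)$; this is exactly the identification recorded in Proposition~\ref{identificationwithZero-cycles with coefficients in Milnor K-theory}(i) for $X_k=\bb P^d_k$. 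The real content is the second isomorphism $A_0(\bb P^d_k,j)\cong K^M_j(k)$, which I would prove by induction on $d$ using localisation and homotopy invariance, following \cite[Prop.~8.2.6]{GilleSzamuely2006}.

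As preparation I would record $A_*(\bb A^d_k,n)$. The projection $\bb A^d_k\to\Spec k$ is an affine bundle of dimension $d$, so Proposition~\ref{A1invariance} gives $A_p(\bb A^d_k,n)\cong A_{p-d}(\Spec k,n)$. The scheme $\Spec k$ is $0$-dimensional, hence its cycle complex is concentrated in homological degree $0$ with $A_0(\Spec k,n)=K^M_n(k)$ and $A_q(\Spec k,n)=0$ for $q\neq 0$. Therefore $A_q(\bb A^d_k,n)$ vanishes unless $q=d$; in particular $A_0(\bb A^d_k,n)=0$ for $d\geq 1$ and $A_1(\bb A^d_k,n)=0$ for $d\geq 2$.

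For the inductive step ($d\geq 2$) I fix a hyperplane $\bb P^{d-1}_k\subset\bb P^d_k$ with open complement $\bb A^d_k$ and read off from the localisation sequence (Proposition~\ref{localisation_seq}) the segment
$$A_1(\bb A^d_k,j)\to A_0(\bb P^{d-1}_k,j)\to A_0(\bb P^d_k,j)\to A_0(\bb A^d_k,j).$$
By the previous paragraph the two outer terms vanish, so the pushforward $A_0(\bb P^{d-1}_k,j)\to A_0(\bb P^d_k,j)$ is an isomorphism; iterating reduces the claim to $d=1$. For the base case I apply localisation to $\bb P^1_k=\bb A^1_k\sqcup\{\infty\}$, where $\infty$ is a closed point with residue field $k$, obtaining
$$A_1(\bb A^1_k,j)\xrightarrow{\partial}A_0(\Spec k,j)\to A_0(\bb P^1_k,j)\to A_0(\bb A^1_k,j)=0.$$
Since $A_0(\Spec k,j)=K^M_j(k)$, the group $A_0(\bb P^1_k,j)$ is the cokernel of the boundary map $\partial$, which is the residue at $\infty$.

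The heart of the argument, and the step I expect to be the main obstacle, is showing that $\partial$ vanishes. By Milnor's exact sequence the subgroup $A_1(\bb A^1_k,j)=\ker(K^M_{j+1}(k(t))\to\bigoplus_{x}K^M_j(k(x)))$ is exactly the image of $K^M_{j+1}(k)$, that is the subgroup of symbols pulled back from $k$; such constant symbols have trivial residue at every place of $k(t)/k$, and in particular at $\infty$, so $\partial=0$ and $A_0(\bb P^1_k,j)\cong K^M_j(k)$. Equivalently, this is the assertion that the proper pushforward $\bb P^1_k\to\Spec k$ induces the isomorphism, i.e.\ Weil reciprocity for Milnor K-theory; the remaining steps of the proof are formal consequences of the localisation sequence and homotopy invariance.
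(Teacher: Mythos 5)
Your proof is correct and follows the same skeleton as the paper's: induction on $d$ via the localisation sequence (Proposition \ref{localisation_seq}) applied to the pair $(\bb P^{d-1}_k,\bb A^d_k)$, with both outer terms killed by homotopy invariance (Proposition \ref{A1invariance}), reducing everything to $d=1$; the identification of $H^d$ with $A_0$ via the Gersten resolution is likewise how the paper reads the statement. The only divergence is in the base case. The paper shows that the restriction $A_1(\bb P^1_k,j)\to A_1(\bb A^1_k,j)$ is an isomorphism using Weil reciprocity (Remark \ref{reciprocity}, Proposition \ref{proposition_reciprocity}), which by exactness of the localisation sequence forces the boundary $A_1(\bb A^1_k,j)\to A_0(\Spec(k),j)$ to vanish; you instead compute $A_1(\bb A^1_k,j)$ directly as the image of $K^M_{j+1}(k)$ via Milnor's exact sequence $0\to K^M_{j+1}(k)\to K^M_{j+1}(k(t))\to\bigoplus_x K^M_j(\kappa(x))\to 0$, and observe that constant symbols consist of units at $\infty$, so the tame symbol kills them and $\partial=0$. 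These two inputs are equivalent (in Gille--Szamuely, reciprocity for $\bb P^1$ is deduced from Milnor's sequence), so the difference is cosmetic rather than structural: your version makes the vanishing of the boundary map explicit and is self-contained at the level of $\bb A^1$, while the paper's reciprocity formulation is the one that applies to arbitrary smooth projective curves and is reused later, e.g. in Proposition \ref{proposition_computationP1A}, where it is applied to the generic fiber of $\bb P^1_A$.
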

\begin{proof}
The proposition is clear for $d=0$. For $d=1$ the localisation sequence of Proposition \ref{localisation_seq} gives an exact sequence 
$$0\to A_1(\bb P^1_k,j)\xto{\cong} A_1(\bb A^1_k,j)\to A_0(\Spec(k),j)\to A_0(\bb P^1_k,j)\xto{} A_0(\bb A^1_k,j)\to 0$$
The isomorphism on the left follows from reciprocity (see Remark \ref{reciprocity} and Proposition \ref{proposition_reciprocity}) and $A_0(\bb A^1_k,j)=0$ by Proposition \ref{A1invariance}. This implies the statement for $d=1$.
Next we proceed by induction on $d$. For $d>1$ the localisation sequence gives the exact sequence
$$0= A_1(\bb A^d_k,j)\to A_0(\bb P^{d-1}_k,j)\to A_0(\bb P^d_k,j)\xto{} A_0(\bb A^d_k,j)= 0$$
where $A_0(\bb A^d_k,j)\cong A_{-d}(\Spec(k),j)=0= A_{1-d}(\Spec(k),j)= A_1(\bb A^d_k,j)$ by Proposition \ref{A1invariance}.
\end{proof}

\begin{proposition}\label{proposition_computationP1A}
Let $A$ be a discrete valuation ring, then
$$A_1(\bb P^d_A,j-1) \cong A_1(\Spec(A),j-1)\cong \hat K^M_j(A),$$
assuming the Gersten conjecture for Milnor K-theory for the isomorphism on the right.
\end{proposition}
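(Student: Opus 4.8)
The plan is to establish the displayed chain of isomorphisms one link at a time, imitating the proof of Proposition \ref{proposition_computationP1k} but carried out relatively over $\Spec(A)$ in place of a field. The leftmost isomorphism $H^d(\bb P^d_A,d+j)\cong A_1(\bb P^d_A,j-1)$ is the identification of Proposition \ref{identificationwithZero-cycles with coefficients in Milnor K-theory}(ii) applied to $X=\bb P^d_A$, which is regular, flat and projective over $A$ of relative dimension $d$; here I would invoke hypothesis (a) or (b) of that proposition (for projective space over $A$ the Gersten conjecture for higher Chow groups is available in the cases we need). It then remains to prove $A_1(\bb P^d_A,j-1)\cong A_1(\Spec(A),j-1)\cong \hat K^M_j(A)$.

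The decisive simplification over the DVR base is the vanishing $A_0(\Spec(A),m)=0$ for every $m\geq 0$: the boundary $C_1(\Spec(A),m)=K^M_{m+1}(K)\to C_0(\Spec(A),m)=K^M_m(k)$ is the tame symbol, which is surjective (one hits $\alpha\in K^M_m(k)$ by $\{\pi\}\cdot\tilde\alpha$ for a lift $\tilde\alpha$ and a uniformiser $\pi$), so its cokernel vanishes. Consequently $A_1(\Spec(A),m)=\ker(\partial)$ is the only possibly nonzero homology of $\Spec(A)$. Combining this with $\bb A^1$-invariance (Proposition \ref{A1invariance}) for the affine bundle $\bb A^d_A\to\Spec(A)$ gives $A_p(\bb A^d_A,n)\cong A_{p-d}(\Spec(A),n)$, which vanishes unless $p=d+1$. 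Feeding this into the localisation sequence (Proposition \ref{localisation_seq}) for the pair $\bb P^{d-1}_A\subset\bb P^d_A$ with open complement $\bb A^d_A$, the terms $A_1(\bb A^d_A,n)$ and $A_2(\bb A^d_A,n)$ both vanish for every $d\geq 2$; note in particular that the otherwise troublesome term $A_2(\bb A^2_A,n)\cong A_0(\Spec(A),n)$ is killed precisely by the vanishing above. Hence $A_1(\bb P^{d-1}_A,n)\xrightarrow{\cong}A_1(\bb P^d_A,n)$, and descending induction on $d$ reduces the claim to the base case $d=1$.

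For $d=1$ I would run the localisation sequence for the section $\Spec(A)\cong\bb P^0_A\subset\bb P^1_A$ at infinity, with complement $\bb A^1_A$. Since $A_1(\bb A^1_A,n)\cong A_0(\Spec(A),n)=0$, the sequence reads $A_2(\bb A^1_A,n)\xrightarrow{\partial}A_1(\Spec(A),n)\xrightarrow{\iota_*}A_1(\bb P^1_A,n)\to 0$, so it suffices to show that the residue-at-infinity map $\partial$ vanishes. This is the relative reciprocity input (Remark \ref{reciprocity}, Proposition \ref{proposition_reciprocity}): under the isomorphism $A_2(\bb A^1_A,n)\cong A_1(\Spec(A),n)$ coming from $\bb A^1$-invariance every class is pulled back from the base, hence is unramified along all finite horizontal divisors, and reciprocity along the fibres of $\bb P^1_A\to\Spec(A)$ forces its residue at $\infty$ to vanish as well; thus $\partial=0$ and $\iota_*$ is an isomorphism. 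Finally, the Gersten complex of $\Spec(A)$ identifies $A_1(\Spec(A),j-1)$ with $\ker(K^M_j(K)\xrightarrow{\partial}K^M_{j-1}(k))$, and the Gersten conjecture for Milnor K-theory of $A$ identifies this kernel with $\hat K^M_j(A)$, giving the rightmost isomorphism. I expect the main obstacle to be this last step together with the relative reciprocity in the base case: the reciprocity statement must be available over the discrete valuation ring and not merely over a field, and checking that classes pulled back from the base have vanishing residue at infinity in this relative setting is the one point where the argument genuinely goes beyond a formal transcription of the field-theoretic computation.
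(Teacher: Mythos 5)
Your proposal is correct and follows essentially the same route as the paper: induction on $d$ via the localisation sequence and $\bb A^1$-invariance, with the vanishing $A_0(\Spec(A),m)=0$ (surjectivity of the tame symbol) handling the term $A_2(\bb A^2_A,\cdot)$, and Weil reciprocity on the generic fibre $\bb P^1_K$ settling the base case $d=1$. The only cosmetic difference is that the paper phrases the $d=1$ step as the restriction $A_2(\bb P^1_A,j-1)\to A_2(\bb A^1_A,j-1)$ being an isomorphism, whereas you equivalently show the connecting map to $A_1(\Spec(A),j-1)$ vanishes; your occasional slips in the Milnor K-theoretic twist index (e.g.\ $A_1(\bb A^1_A,n)\cong A_0(\Spec(A),n)$ should carry a shifted second argument) are harmless since the relevant groups vanish for all twists.
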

\begin{proof}
The proposition is clear for $d=0$. 
For $d=1$ the localisation sequence of Proposition \ref{localisation_seq} gives an exact sequence
$$0\to A_2(\bb P^1_A,j-1)\xto{\cong} A_2(\bb A^1_A,j-1)\to A_1(\Spec(A),j-1)\to A_1(\bb P^1_A,j-1)\xto{} A_1(\bb A^1_A,j-1)=0.$$
The isomorphism on the left follows from reciprocity (see Remark \ref{reciprocity} and Proposition \ref{proposition_reciprocity}) applied to the generic fiber of $\bb P^1_A$ and the fact, that the sets of codimension $1$ points of $\bb P^1_A$ and $\bb A^1_A$ contained in the special fiber coincide. The group on the right is zero since $A_1(\bb A^1_A,j-1)\cong A_0(\Spec(A),j)=0$ by Proposition \ref{A1invariance} and \ref{identificationwithZero-cycles with coefficients in Milnor K-theory}(ii). This implies the statement for $d=1$.
Next we proceed by induction on $d$. For $d>1$ the localisation sequence gives the exact sequence
$$0= A_2(\bb A^d_A,j-1)\to A_1(\bb P^{d-1}_A,j-1)\to A_1(\bb P^d_A,j-1)\xto{} A_1(\bb A^d_A,j-1)= 0$$
where $A_1(\bb A^d_A,j)\cong A_{1-d}(\Spec(A),j)=0= A_{2-d}(\Spec(A),j)= A_2(\bb A^d_A,j)$ by Proposition \ref{A1invariance} and \ref{identificationwithZero-cycles with coefficients in Milnor K-theory}(ii).
\end{proof}


\subsection{A higher degree map}\label{section_higher_degree_map}
Let $X_k$ be a proper $k$-scheme with structure map $p:X_k\to \Spec (k)$. Higher Chow groups are covariantly functorial for proper maps. In particular, $p$ induces a push-forward map
$$p_*:\CH^{d+j}(X_k,j)\to \CH^j(k,j)\cong K^M_j(k).$$

If $[F:k]$ is finite, then there is a commutative diagram
$$\xymatrix{
\CH^{d+j}(X_{F},j)  \ar[r] \ar[d] &  K^M_j(F) \ar[d]^{N_{F/k}} \\
\CH^{d+j}(X_{k},j)   \ar[r]^{}  & K^M_j(k),    \
}$$
in which the vertical maps are proper push-forwards. For Milnor K-theory this is the norm map.
Furthermore, if $F$ is an arbitrary field extension of $k$, then the diagram
$$\xymatrix{
\CH^{d+j}(X_{F},j)  \ar[r] &  K^M_j(F) \\
\CH^{d+j}(X_{k},j) \ar[u]  \ar[r]^{}  & K^M_j(k), \ar[u]    \
}$$
in which the vertical maps are flat pullbacks, commutes. 
\begin{definition}
We denote the kernel of $p_*:\CH^{d+j}(X_k,j)\to K^M_j(k)$ by $\CH^{d+j}(X_k,j)_0$. If $j=0$, then we sometimes drop the $j$ from the notation.
\end{definition}

\begin{remark}[Reciprocity]\label{reciprocity}
In view of the isomorphism
$$CH^{d+j}(X_k,j)\cong \mathrm{coker}(\oplus_{x\in X_k^{(d-1)}}K_{j+1}^M(x)\rightarrow \oplus_{x\in X_k^{(d)}}K_{j}^M(x))\cong A_0(X_k,j)$$
from Proposition \ref{identificationwithZero-cycles with coefficients in Milnor K-theory}(i), the existence of the proper push-forward $p_*$ implies Weil reciprocity for curves:
\begin{proposition}(\cite[Prop. 7.4.4]{GilleSzamuely2006})\label{proposition_reciprocity}
Let $C$ be a smooth projective curve over $k$. For a closed point $P$, let $\partial^M_P:K^M_n(k(C))\to K^M_{n-1}(\kappa(P))$ be the tame symbol coming from the valuation on $k(C)$ defined by $P$. Then for all $\alpha\in K^M_n(k(C))$ we have
$$\sum_{P\in C_{(0)}}(N_{\kappa(P)\mid k}\circ \partial^M_P)(\alpha)=0.$$
\end{proposition}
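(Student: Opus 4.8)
The plan is to obtain reciprocity as a formal consequence of the existence and functoriality of the proper push-forward, exactly as signalled by the sentence preceding the statement. Set $n=j+1$, so that the assertion concerns $\alpha\in K^M_{j+1}(k(C))$ together with the tame symbols $\partial^M_P:K^M_{j+1}(k(C))\to K^M_j(\kappa(P))$. Since $C$ is a smooth projective curve we have $d=\dim C=1$, and Proposition \ref{identificationwithZero-cycles with coefficients in Milnor K-theory}(i) specialises to
$$\CH^{1+j}(C,j)\cong A_0(C,j)\cong \mathrm{coker}\Big(K^M_{j+1}(k(C))\xrightarrow{\ \oplus_P\partial^M_P\ }\bigoplus_{P\in C_{(0)}}K^M_j(\kappa(P))\Big),$$
where the only codimension-zero point is the generic point $\eta$, with residue field $k(C)$, and the boundary is the direct sum of the tame symbols.

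Next I would invoke the structure map $p:C\to\Spec(k)$, which is proper because $C$ is projective over $k$, together with its push-forward on higher Chow groups $p_*:\CH^{1+j}(C,j)\to\CH^j(k,j)\cong K^M_j(k)$. Under the isomorphism above I would identify $p_*$ with the map induced on the cokernel by $\bigoplus_P N_{\kappa(P)/k}:\bigoplus_P K^M_j(\kappa(P))\to K^M_j(k)$; on the class supported at a closed point $P$ this is the standard compatibility of the proper push-forward with the norm, cf.\ the norm diagram above. Granting this identification, the mere fact that $p_*$ is \emph{defined} on the quotient $A_0(C,j)$ means that $\bigoplus_P N_{\kappa(P)/k}$ factors through the cokernel of $\oplus_P\partial^M_P$, i.e.\ annihilates its image. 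Unwinding, for every $\alpha\in K^M_{j+1}(k(C))$ one gets $\sum_{P}N_{\kappa(P)/k}(\partial^M_P(\alpha))=0$, which is exactly the claim.

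A cleaner, self-contained variant that bypasses higher Chow groups is to use Rost's basic push-forward $p_*:C_*(C,j)\to C_*(\Spec(k),j)$ of Definition \ref{definition_correspondences}(i) together with its compatibility with the differential, Lemma \ref{lemma_correspondences_compatibility}(i). As $\Spec(k)$ is zero-dimensional, its complex is concentrated in degree $0$, equal to $K^M_j(k)$, so $C_1(\Spec(k),j)=0$; the component of $p_*$ at the generic point vanishes because $k(C)$ is not finite over $k$, while at each closed point $P$ it is $N_{\kappa(P)/k}$. The chain-map identity $d_{\Spec(k)}\circ p_*=p_*\circ d_C$ evaluated on $C_1(C,j)=K^M_{j+1}(k(C))$ then reduces to $\sum_P N_{\kappa(P)/k}\circ\partial^M_P=0$. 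The point I would watch for is circularity: for a curve mapping to a point, Rost's Proposition 4.6(1) essentially \emph{is} this reciprocity law, so this second route only counts as a derivation if Lemma \ref{lemma_correspondences_compatibility} is taken as an imported black box. The first route is preferable on this count, since the proper push-forward on higher Chow groups is constructed independently of Rost's complex; there the real work lies in matching that push-forward with the norm map on zero-cycle representatives, which is where I expect the weight of the argument to sit.
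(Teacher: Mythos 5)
Your first route is essentially the paper's own argument: the paper presents this proposition inside Remark \ref{reciprocity}, citing \cite[Prop. 7.4.4]{GilleSzamuely2006}, and observes that reciprocity follows from the identification of $\CH^{1+j}(C,j)$ with $A_0(C,j)$, i.e.\ the cokernel of the tame symbols, together with the existence of the proper push-forward $p_*$, whose compatibility with the norm maps is exactly the displayed norm diagram you invoke. Your second (Rost-complex) route and the circularity caveat you attach to it go beyond what the paper records, but since you rest the proof on the first route, your proposal takes essentially the same approach as the paper.
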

\end{remark}

If $A$ is a discrete valuation ring with quotient field $K$ and residue field $k$ and $X$ a proper $A$-scheme with structural map $p:X\to \Spec (A)$, then $p$ induces a push-forward map
$$p_*:\CH^{d+j}(X,j)\to \CH^j(\Spec (A),j).$$
Assuming the Gersten conjecture for Milnor K-theory and higher Chow groups for a henselian DVR, the group on the right is isomorphic to the $j$-th improved Milnor K-theory $\hat{K}^M_j(A)$ (see Propositions \ref{Gersten_KM} and \ref{Gersten_higherChow}). 

\begin{definition}
We denote the kernel of $p_*:\CH^{d+j}(X,j)\to \CH^j(\Spec (A),j)$ by $\CH^{d+j}(X,j)_0$.
\end{definition}

The diagram 
$$\xymatrix{
\CH^{d+j}(X_{},j)  \ar[r]^{res} \ar[d]^{p_*} & \CH^{d+j}(X_{k},j)   \ar[d]^{p_*} \\
\CH^j(\Spec (A),j) \ar[r]^{sp}  & K^M_j(k),    \
}$$
commutes. Therefore there is a morphism
$$\CH^{d+j}(X,j)_0\to \CH^{d+j}(X_k,j)_0.$$

\begin{proposition}\label{proposition_push_pull}
Let $F/k$ be a finite field extension of degree $m$. Let $X_k$ be a $k$-scheme and $X_F$ its base change to $F$. Then the composition 
$$\CH^{d+j}(X_k,j)\to \CH^{d+j}(X_F,j)\to \CH^{d+j}(X_k,j)$$ 
is multiplication by $m$.
\end{proposition}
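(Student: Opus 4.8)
The plan is to reduce everything to Rost's four basic maps for the finite flat morphism $g\colon X_F\to X_k$ and then to carry out the computation directly on the cycle complex. Since $F/k$ is finite of degree $m$, the structure map $\Spec F\to\Spec k$ is finite flat of degree $m$ and equidimensional of relative dimension $0$, and all of these properties are preserved by the base change $g\colon X_F=X_k\times_k\Spec F\to X_k$. Hence both the flat pullback $g^*$ and the proper pushforward $g_*$ of Definition \ref{definition_correspondences} are defined, and under the identifications of Proposition \ref{identificationwithZero-cycles with coefficients in Milnor K-theory}(i) the two arrows in the statement correspond exactly to $g^*\colon A_0(X_k,j)\to A_0(X_F,j)$ and $g_*\colon A_0(X_F,j)\to A_0(X_k,j)$. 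It therefore suffices to prove that $g_*\circ g^*=m\cdot\mathrm{id}$.

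I would prove this already on the level of cycle complexes, entry by entry. Fix $x\in X_k$. Its fibre $(X_F)_x=\Spec\big(k(x)\otimes_k F\big)$ is a $0$-dimensional (Artinian) scheme, so every point $y$ lying over $x$ satisfies $\codim_y((X_F)_x)=0$ and contributes $(g^*)^x_y=\mathrm{length}(\roi_{(X_F)_x,y})\cdot i_*$, where $i_*$ is induced by the inclusion $k(x)\hookrightarrow k(y)$. Because $g$ is finite, $(g_*)^y_{x'}=N_{k(y)/k(x)}$ if $x'=x=g(y)$ and $0$ otherwise. The composition is thus \emph{diagonal}: $(g_*g^*)^x_{x'}=0$ for $x'\neq x$, while
$$(g_*g^*)^x_x=\sum_{y\mapsto x}\mathrm{length}(\roi_{(X_F)_x,y})\cdot\big(N_{k(y)/k(x)}\circ i_*\big).$$

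The two inputs that finish the argument are the norm--restriction identity $N_{k(y)/k(x)}\circ i_*=[k(y):k(x)]\cdot\mathrm{id}$ in Milnor K-theory, valid for any finite field extension, and the length--degree formula obtained by decomposing $k(x)\otimes_k F=\prod_y\roi_{(X_F)_x,y}$ into its local Artinian factors:
$$\sum_{y\mapsto x}\mathrm{length}(\roi_{(X_F)_x,y})\cdot[k(y):k(x)]=\dim_{k(x)}\big(k(x)\otimes_k F\big)=[F:k]=m.$$
Combining these gives $(g_*g^*)^x_x=m\cdot\mathrm{id}$, so that $g_*\circ g^*$ is multiplication by $m$ on each term of the cycle complex, hence on $A_0$ and thus on $\CH^{d+j}(X_k,j)$. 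I expect the only real care to lie in the bookkeeping for inseparable or non-reduced fibres---verifying that the length multiplicities appearing in $g^*$ are precisely what is needed for the two formulas above to combine to $m$, and that the higher Chow functoriality matches Rost's basic maps under the identification of Proposition \ref{identificationwithZero-cycles with coefficients in Milnor K-theory}---but this is routine rather than a genuine obstacle.
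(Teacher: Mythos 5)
Your proof is correct and is essentially the paper's own argument in expanded form: the paper proves the proposition by citing \cite[Cor. 1.4]{Bl86}, noting alternatively that it follows from the identification of Proposition \ref{identificationwithZero-cycles with coefficients in Milnor K-theory} together with the corresponding push--pull fact for Milnor K-theory, which is exactly the norm--restriction and length--degree computation you carry out on Rost's cycle complex. The points you flag as routine (inseparable residue extensions, compatibility of the higher Chow functorialities with Rost's basic maps under the identification) are precisely the details the paper also leaves implicit.
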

\begin{proof}
This is \cite[Cor. 1.4]{Bl86}. Alternatively this follows from Proposition \ref{identificationwithZero-cycles with coefficients in Milnor K-theory}(ii) and the corresponding fact for Milnor K-theory.
\end{proof}

\subsection{Correspondences on Chow groups with coefficients}\label{section_correspondences}
We define an action of classical correspondences on Chow groups with coefficients. The definition is probably well-known to the expert; for example in \cite{Deglise2006}, D\'eglise shows that Chow groups with coefficients are Nisnevich sheaves with transfer. In order to define this action, we need the following proposition, which is one of the main results of Rost in \cite{Ro96}, generalising the pullback map on algebraic cycles defined using the deformation to the normal cone (see \cite{Fulton1998}).

\begin{proposition}
Let $f:Y\to X$ be a morphism of schemes with $X$ smooth over $S$ and $r=\dim_SX-\dim_SY$. Then there is a homomorphism of complexes
$$I(f):C_*(X,n)\to C_{*-r}(Y,n+r)$$
which has all the expected properties of a pullback map of algebraic cycles. It is defined in terms of the four basic maps and therefore induces a map on homology
$$f^*:A_p(X,n)\to A_{p-r}(Y,n+r).$$
\end{proposition}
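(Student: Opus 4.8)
The plan is to reconstruct Rost's pullback from \cite{Ro96} in the relative setting over $S$, reducing a general $f$ to a flat pullback and a Gysin map for a regular closed immersion, the latter built by deformation to the normal cone as in \cite{Fulton1998}. The starting observation is that since $X$ is smooth over $S$, the graph $\gamma_f\colon Y\to X\times_S Y$, $y\mapsto (f(y),y)$, is a section of the smooth projection $\op{pr}_Y\colon X\times_S Y\to Y$, hence a regular closed immersion of codimension $n:=\dim_S X$ whose normal bundle is $f^*T_{X/S}$. This identification uses only smoothness of $X/S$ and is insensitive to the base, so it transfers verbatim over $S$.

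First I would construct the Gysin map $\gamma_f^*$ for this regular immersion. Forming the deformation to the normal cone $D(X\times_S Y,Y)\to \bb A^1_S$, whose special fibre is the normal bundle $N=f^*T_{X/S}$ and whose open complement is $(X\times_S Y)\times_S\bb G_m$, the specialisation map $\sigma$ is the composite of the flat pullback along the $\bb G_m$-factor with the boundary map $\partial$ of Definition \ref{definition_correspondences}(iv) onto the special fibre. Since $N\to Y$ is a vector bundle of rank $n$, homotopy invariance (Proposition \ref{A1invariance}) identifies $A_\ast(Y,\cdot)$ with $A_{\ast+n}(N,\cdot)$ up to the standard twist of the coefficient index, and I set $\gamma_f^*:=(\pi^*)^{-1}\circ(\op{incl})_*\circ\sigma$. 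Every constituent is one of the four basic maps, so this produces the required morphism of complexes $I(\gamma_f)$; its independence of choices and its chain-map property follow from the compatibility of the basic maps with the differentials (Lemma \ref{lemma_correspondences_compatibility}) together with the usual chain-homotopy controlling $\sigma$.

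Assembling $I(f)$ requires transporting a class from $X$ onto the product $X\times_S Y$ before applying $\gamma_f^*$, equivalently applying the refined Gysin of the diagonal $\Delta_{X/S}\colon X\to X\times_S X$ to the Cartesian square in which $\gamma_f$ is the base change of $\Delta_{X/S}$. Here lies the main obstacle. Over a field one would simply use the flat pullback along $\op{pr}_X\colon X\times_S Y\to X$, but over $S$ this projection is the base change of $Y\to S$ and is \emph{not} flat in general; indeed in the applications of this paper $Y=X_k$ is the special fibre, which is the extreme non-flat case. I would therefore avoid $\op{pr}_X^*$ entirely and feed the class in through Rost's cross product (itself built from the four basic maps), or equivalently through the refined Gysin of the smooth diagonal, neither of which requires flatness of $Y\to S$. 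The flat pullbacks that do occur — along $\op{pr}_Y$, the vector-bundle projection, and the $\bb A^1$- and $\bb G_m$-factors — are all smooth over $S$ precisely because $X/S$ is smooth, so they remain legitimate. The degree shift then bookkeeps to the asserted $f^*\colon A_p(X,n)\to A_{p-r}(Y,n+r)$ with $r=\dim_S X-\dim_S Y$.

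Finally, the expected properties — functoriality $g^*f^*=(fg)^*$, compatibility with proper pushforward and flat pullback, and the projection formula — would be verified by reducing each identity to the corresponding identity of \cite{Ro96} over the residue fields appearing in the Gersten complexes, the base $S$ intervening only through the indexing. The genuinely delicate remaining point, beyond the non-flatness just discussed, is to re-establish the compatibilities of the basic maps with the differentials and the behaviour of the deformation space in mixed characteristic. These are exactly the statements that either localise to equal characteristic or are known for arbitrary discrete valuation rings, and I would invoke the detailed verifications of \cite{LuedersGerstenKM} to complete them.
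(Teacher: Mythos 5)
You have correctly identified what the paper itself never spells out: the paper gives no proof of this proposition at all, but simply invokes it as one of Rost's main results in \cite{Ro96}, with the transfer to the relative setting implicitly handled as in Lemma \ref{lemma_correspondences_compatibility} (reduction to equal characteristic or to statements known for arbitrary discrete valuation rings, deferring details to \cite{LuedersGerstenKM}). Your reconstruction --- graph factorisation, the regular immersion $\gamma_f$ with normal bundle $f^*T_{X/S}$, deformation to the normal cone, homotopy invariance --- is exactly Rost's construction, and you are right that the one genuinely new difficulty over $S$ is that $\op{pr}_X\colon X\times_S Y\to X$ is the base change of $Y\to S$ and hence fails to be flat precisely in the case of interest, e.g.\ when $Y$ lies in the special fibre.

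However, neither of your two workarounds closes this gap. First, Rost's cross product is \emph{not} built from the four basic maps; it is a separate pointwise construction whose compatibility with the differentials is proved only over a field, and its relative analogue suffers from exactly the problem you are trying to avoid: if $Y$ is vertical, the components of $\overline{\{x\}}\times_S Y$ have dimension $\dim\overline{\{x\}}-1+\dim Y$ when $x$ is horizontal but $\dim\overline{\{x\}}+\dim Y$ when $x$ is vertical, so $-\times[Y]$ is not even a map of graded complexes. This excess dimension of vertical-times-vertical products is the standard obstruction to exterior products over a Dedekind base, so appealing to the cross product here begs the question. Second, the ``refined Gysin of the smooth diagonal'' addresses the step that was never problematic (pulling back along the regular immersion $\gamma_f$, where your deformation argument is fine); it does not transport a class from $X$ onto $X\times_S Y$, which is where flatness was needed. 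The actual repair is a case distinction: if $Y\to S$ is flat, then $\op{pr}_X$ is flat and Rost's proof applies verbatim; if $Y$ is integral and not flat over $S$, then $f$ factors through the special fibre $X_k$, which is smooth over $k$ because $X/S$ is smooth, and one defines $I(f)$ as Rost's field-level pullback for $Y\to X_k$ composed with the restriction map $\partial\circ\{-\pi\}\circ j^*\colon C_p(X,n)\to C_{p-1}(X_k,n+1)$ of Section \ref{section_higher_zero_cycles}, which is a composition of basic maps. Two smaller points: $(\pi^*)^{-1}$ exists only on homology, so your composite defines $f^*$ on $A_*$ but not the asserted chain-level $I(f)$ without Rost's more careful treatment; and every place the paper actually uses this proposition (the intersection product, the correspondence action of Definition \ref{definition_correspondence}, blow-up descent) is over a field, where Rost's original statement suffices as stated.
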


In \cite[Sec. 14]{Ro96} Rost defines a cross product
$$\times: C_p(Y,r)\times_{\bb Z}C_q(Z,s)\to C_{p+q}(X\times Z,r+s)$$ 
and shows that it induces a product on homology
$$\times: A_p(Y,r)\times_{}A_q(Z,s)\to A_{p+q}(X\times Z,r+s).$$ 

\begin{definition}
Let $\delta: X\to X\times X$ be the diagonal map and $\dim X=d$. We define an intersection product
$$\cup:A_p(X,r)\otimes A_q(X,s)\to A_{p+q-d}(X,r+s+d)$$
by the composition 
$$\cup: A_p(X,r)\times_{}A_q(X,s)\xto{\times} A_{p+q}(X\times X,r+s)\xto{\delta^*} A_{p+q-d}(X,r+s+d) $$
$$x\otimes y\mapsto (\delta_X)^*(x\times y).$$
In particular, if $r=-p$ and $s=-q$, then this coincides with the usual product of algebraic cycles.
\end{definition}
This allows us to define the action of a correspondence.
\begin{definition}[Action of correspondences]\label{definition_correspondence} 
Let $k$ be a field and $X$ and $Y$ be smooth $k$-varieties.
\begin{enumerate}
\item A correspondence between $X$ and $Y$ is a cycle
$$\Gamma\in \CH_q(X\times Y)=A_q(X\times Y,-q).$$
A $0$-correspondence between $X$ and $Y$ is an element of $\CH_{\dim Y}(X\times Y)$.

\item Let $d=\dim X$ and $d'=\dim Y$. Let $X$ furthermore be projective, which implies that the projection $p_2:X\times Y\to Y$ to the second component is proper (we denote the first projection by $p_1$). 
A correspondence $\Gamma$ defines a morphism
$$\Gamma_*:A_p(X,n) \xto{p_1^*} A_{p+d'}(X\times Y,n-d')\xto{\otimes \Gamma} A_{p+d'+q-(d+d')}(X\times Y,n-d'-q+d+d')\xto{{p_2}_*} A_{p+q-d}(Y,n-q+d) $$
$$x\mapsto p_1^*(x)\mapsto p_1^*(x)\otimes \Gamma \mapsto {p_2}_*(p_1^*(x)\otimes \Gamma). $$
Note that for $X=Y$ and $q=\dim Y$, this simplifies to 
$$\Gamma_*:A_p(X,n) \to A_{p}(X,n). $$
\end{enumerate}
\end{definition}

\begin{remark}
More generally, in Definition \ref{definition_correspondence}, we could have defined the action of higher correspondences $\Gamma\in A_p(X\times Y,q)$ for arbitrary $p,q\in \bb Z$.
\end{remark}

\begin{lemma}\label{lemma_moving}
Let $X$ be a regular integral scheme and $Y$ a closed subscheme of $X$. Let $U=X\setminus Y$. Then the map
$$\bigoplus_{x\in U_{(0)}}K^M_j(x)\to A_0(X,j)$$
is surjective.
\end{lemma}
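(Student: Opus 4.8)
The plan is to prove the stronger, cycle-level statement that
$$C_0(Y,j)\subseteq C_0(U,j)+\Im\big(d_X\colon C_1(X,j)\to C_0(X,j)\big)$$
as subgroups of $C_0(X,j)$. Granting this, surjectivity follows at once: by the boundary-triple decomposition underlying the localisation sequence (Proposition \ref{localisation_seq}) one has $C_0(X,j)=C_0(Y,j)\oplus C_0(U,j)$, so every class in $A_0(X,j)$ is represented by some $\alpha_Y+\alpha_U$ with $\alpha_Y\in C_0(Y,j)$, $\alpha_U\in C_0(U,j)$, and the displayed inclusion lets one replace $\alpha_Y$ modulo $\Im(d_X)$ by a cycle supported on $U$. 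Since $C_0(U,j)+\Im(d_X)$ is a subgroup and $K^M_j(k(x))$ is generated by symbols $\{a_1,\dots,a_j\}$, it suffices to show that for each closed point $x\in Y$ and each symbol $\alpha=\{a_1,\dots,a_j\}\in K^M_j(k(x))$ the cycle $\alpha\cdot[x]$ lies in $C_0(U,j)+\Im(d_X)$. (We assume $U\neq\emptyset$, as otherwise the assertion is empty, and $\dim X\ge1$, the case $\dim X=0$ giving $U=X$ immediately.)

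Next I would produce a suitable curve. Since $X$ is regular, the local ring $\roi_{X,x}$ is regular of dimension $\ge1$, so I can pick elements $s_1,\dots,s_r$ ($r=\dim\roi_{X,x}-1$) forming part of a regular system of parameters whose common zero locus is an integral curve $Z\subseteq X$ through $x$ that is regular at $x$; choosing the $s_i$ generically I arrange in addition that $Z\not\subseteq Y$. Then $Z\cap Y$ is finite and $Z\cap U$ is dense in $Z$. Let $\nu\colon\widetilde Z\to Z$ be the normalisation; it is finite because $X$, hence $Z$, is excellent, and $\widetilde Z$ is a regular integral curve with function field $k(Z)$. Because $Z$ is regular at $x$, there is a unique point $\widetilde x\in\widetilde Z$ over $x$ and $k(\widetilde x)=k(x)$. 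Write $\Sigma=\nu^{-1}(Z\cap Y)$ for the finite set of points over $Y$; every point of $\widetilde Z\smallsetminus\Sigma$ lies over $Z\cap U\subseteq U$.

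Now I construct the symbol. The semilocal ring $\roi_{\widetilde Z,\Sigma}$ is a semilocal Dedekind domain, hence a principal ideal domain, so by the Chinese Remainder Theorem I can choose $g$ and lifts $\widetilde a_1,\dots,\widetilde a_j$ in $k(Z)^\times$ with prescribed behaviour along $\Sigma$: $v_{\widetilde x}(g)=1$ while $g$ is a unit at every other point of $\Sigma$, and each $\widetilde a_i$ is a unit at all points of $\Sigma$ with $\widetilde a_i(\widetilde x)=a_i$. Set $\rho=\{g,\widetilde a_1,\dots,\widetilde a_j\}\in K^M_{j+1}(k(Z))$ and compute $d_X(\rho)=\sum_{w\in Z_{(0)}}\partial^z_w(\rho)$, where $z$ is the generic point of $Z$. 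The standard computation of the tame symbol gives $\partial^z_x(\rho)=\{a_1,\dots,a_j\}=\alpha$ (using that $\widetilde x$ is the unique point over $x$, $k(\widetilde x)=k(x)$, $v_{\widetilde x}(g)=1$, and the $\widetilde a_i$ are units reducing to $a_i$); at the remaining points of $Z\cap Y$ all entries of $\rho$ are units, so the tame symbol vanishes; and at points of $Z\cap U$ the contributions lie in $C_0(U,j)$ since they are supported over $U$. Hence $d_X(\rho)=\alpha\cdot[x]+c_U$ with $c_U\in C_0(U,j)$, so $\alpha\cdot[x]\in\Im(d_X)+C_0(U,j)$, as required.

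The heart of the matter, and the step I expect to be the main obstacle, is controlling the boundary so that it introduces no new points of $Y$. There are two potential sources of trouble: the curve $Z$ generally meets $Y$ at points other than $x$, and the chosen point of the normalisation over $x$ could acquire a residue-field extension. Both are neutralised above: regularity of $X$ lets me take $Z$ regular at $x$, so that $k(\widetilde x)=k(x)$ and $\partial^z_x$ is literally the tame symbol with no norm, while the semilocal principal-ideal-domain structure of $\roi_{\widetilde Z,\Sigma}$ lets me force $g$ and the $\widetilde a_i$ to be units at the other points of $\Sigma$, killing their tame symbols there; all uncontrolled zeros and poles are then automatically pushed into $U$. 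The genuinely geometric input is the existence of an integral curve through $x$ that is regular at $x$ and escapes $Y$, which is precisely where the regularity hypothesis on $X$ is used.
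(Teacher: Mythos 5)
Your proof is correct and follows essentially the same route as the paper's: pick an integral curve through the given closed point, regular there and not contained in $Y$ (cut out by part of a regular system of parameters), pass to its normalisation, and use CRT on the semilocal ring at the points lying over $Y$ to produce a symbol in $K^M_{j+1}$ of the function field whose boundary is $\alpha$ at $x$ plus terms supported in $U$. The only cosmetic difference is in the CRT step: you make $g$ and all the $\tilde a_i$ units at the remaining points of $\Sigma$, whereas the paper lifts the whole class to $\tilde\alpha$ and takes a single function $f\equiv 1$ at those points --- both devices kill the unwanted tame symbols over $Y$.
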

\begin{proof}
Let $y\in Y_{(0)}$ be a closed point and $\alpha\in K^M_j(y)$. We pick a curve $C$, i.e. an integral closed subscheme of dimension $1$, of $X$ with the following properties:
\begin{itemize}
\item[(1)] The generic point $z$ of $C$ is contained in $U$,
\item[(2)] $C$ is regular at $y$.
\end{itemize}
This curve can be constructed as follows: we may assume that $Y$ is a divisor which locally at $y$ is defined by an element $\pi\in \roi_{X,y}$. If $\dim X=d$, then we pick $d-1$ independent regular parameters $x_1,\dots,x_{d-1}\in \roi_{X,y}$ such that $x_i\roi_{X,y}\nsupseteq \pi\roi_{X,y}$ and let $C:=\overline{V(x_1,\dots,x_{d-1})}$ (cf. \cite[Sublem. 7.4]{SS10}). Let $\rho:\tilde{C}\to C$ be the normalisation of $C$ and $\tilde{\alpha}\in K(C)=K(\tilde{C})$ be a lift of $\alpha$. By the Chinese remainder theorem we can pick a rational function $f\in K(C)=K(\tilde{C})$ which is congruent to $1$ at all elements of the set of points $\rho^{-1}(C\cap Y\setminus y)$ and vanishes at the closed point $\rho^{-1}(y)$. Then $\rho_*(\partial_{\tilde{C}}(\{f,\tilde\alpha\}))=\alpha+Z$ with $Z\in \bigoplus_{x\in U_{(0)}}K^M_j(x)$.
\end{proof}

\begin{proposition}
Let $X$ and $Y$ be regular projective and birationally equivalent $k$-varieties. Then 
$$A_0(X,j)\cong A_0(Y,j).$$
In particular, by Proposition \ref{identificationwithZero-cycles with coefficients in Milnor K-theory}(i), $\CH^{d+j}(X,j)\cong \CH^{d+j}(Y,j)$ for $d=\dim X=\dim Y$. 
\end{proposition}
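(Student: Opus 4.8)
The plan is to realise the birational equivalence by a pair of graph correspondences and to compute their action on $A_0$ with the help of the moving lemma. A birational equivalence between the integral varieties $X$ and $Y$ restricts to an isomorphism $\phi\colon U\isoto V$ between dense open subschemes $U\sseq X$ and $V\sseq Y$; write $Z=X\setminus U$ and $W=Y\setminus V$ for the reduced complements, which have dimension $<d$. Let $\Gamma=\overline{\Gamma_\phi}\sseq X\times Y$ and $\Gamma'=\overline{\Gamma_{\phi^{-1}}}\sseq Y\times X$ be the closures of the graphs; these are integral of dimension $d=\dim X=\dim Y$, hence are $0$-correspondences $\Gamma\in\CH_d(X\times Y)$ and $\Gamma'\in\CH_d(Y\times X)$ in the sense of Definition \ref{definition_correspondence}. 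Since $X$ and $Y$ are projective, that definition applies and yields maps
$$\Phi:=\Gamma_*\colon A_0(X,j)\To A_0(Y,j),\qquad \Psi:=\Gamma'_*\colon A_0(Y,j)\To A_0(X,j).$$

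The heart of the argument is the computation of $\Phi$ on cycles supported on $U$. Over $U$ one has $\Gamma\cap(U\times Y)=\Gamma_\phi$, the graph of the isomorphism $\phi$, so the first projection restricts to an isomorphism $\Gamma\cap(U\times Y)\isoto U$ and meets each fibre $\{x\}\times Y$ with $x\in U$ transversally in a single reduced point. Consequently, for a closed point $x\in U$ and $\alpha\in K^M_j(x)$ viewed in $C_0(X,j)$, the class $(p_2)_*\big(p_1^*(\alpha)\cup\Gamma\big)$ is carried by the single reduced point $(x,\phi(x))$, whose residue field is identified with $k(x)$ via $\phi$ and which receives the coefficient $\alpha$ with length multiplicity one. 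Hence on the subgroup $\bigoplus_{x\in U_{(0)}}K^M_j(x)=\bigoplus_{y\in V_{(0)}}K^M_j(y)$ of cycles supported on $U=V$, the map $\Phi$ is exactly the transport isomorphism induced by $\phi$, and likewise $\Psi$ is the transport induced by $\phi^{-1}$; in particular these two transports are mutually inverse on this common cycle group.

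To conclude, by Lemma \ref{lemma_moving} the natural map $\bigoplus_{x\in U_{(0)}}K^M_j(x)\to A_0(X,j)$ is surjective, so every class of $A_0(X,j)$ is represented by a cycle supported on $U$, and symmetrically every class of $A_0(Y,j)$ by a cycle supported on $V$. Combining this with the previous paragraph, $\Psi\circ\Phi$ acts as the identity on a generating set of $A_0(X,j)$ and $\Phi\circ\Psi$ as the identity on one of $A_0(Y,j)$, so $\Phi$ and $\Psi$ are mutually inverse isomorphisms $A_0(X,j)\cong A_0(Y,j)$. The final assertion then follows from Proposition \ref{identificationwithZero-cycles with coefficients in Milnor K-theory}(i). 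The step I expect to be the main obstacle is the computation in the second paragraph: one must unwind Rost's intersection product $p_1^*(-)\cup\Gamma$ together with the proper pushforward $(p_2)_*$ to verify that the graph correspondence acts as the naive transport on cycles supported over $U$. The crucial point is that the additional components of the closure $\Gamma$ lie entirely over $Z$ and so meet no fibre $\{x\}\times Y$ with $x\in U$; this forces $p_1|_\Gamma$ to have reduced, length-one fibres over such $x$ and guarantees that only the trivial length multiplicity enters, so that no spurious contribution is picked up along the way.
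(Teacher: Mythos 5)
Your proof is correct, and while it uses the same two ingredients as the paper --- the graph closures $\Gamma$, $\Gamma'$ acting as $0$-correspondences in the sense of Definition \ref{definition_correspondence}, and the moving Lemma \ref{lemma_moving} --- the mechanism is genuinely different. The paper follows Colliot-Th\'el\`ene--Coray and Fulton: it forms the composite correspondence $\Gamma'\circ\Gamma\in\CH^d(X\times X)$, quotes the classical decomposition of $\Gamma'\circ\Gamma$ as the diagonal $\Delta$ plus correspondences whose projections to $X$ have positive codimension, kills the action of the latter by Lemma \ref{lemma_moving}, and concludes $\Gamma'_*\circ\Gamma_*=\Delta_*=\mathrm{id}$; this implicitly uses both that the action of correspondences on $A_0(-,j)$ is compatible with composition of correspondences and that $\Delta_*=\mathrm{id}$, neither of which is verified in the paper for Rost's theory. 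You avoid composing correspondences altogether: you compute the action of each graph directly on classes represented by cycles in the isomorphism locus $U\cong V$, showing it is the naive transport along $\phi$, and then the moving lemma makes the two transports mutually inverse on all of $A_0$. What your route buys is independence from the (unstated) composition formula; what it costs is the explicit verification --- which you rightly single out as the main obstacle --- that Rost's product $p_1^*(\alpha)\cup[\Gamma]$ followed by $(p_2)_*$ is the naive transfer when the supports meet properly in a single reduced point. This verification is not really extra work relative to the paper: the paper's implicit step $\Delta_*=\mathrm{id}$ is exactly the special case $\phi=\mathrm{id}$ of your computation, so both proofs ultimately rest on the same transverse-intersection input, and yours needs strictly fewer unstated compatibilities of Rost's formalism.
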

\begin{proof}
We follow the proof of \cite[Prop. 6.3]{CTCoray1979} and \cite[Ex. 16.1.11]{Fulton1998}, where the statement is proved for $j=0$. Let $\Gamma$ be the closure of the graph of the a birational map from $X$ to $Y$. Then $\Gamma$ and its inverse $\Gamma'$ are correspondences from $X$ to $Y$, resp. $Y$ to $X$. We need to show that $\Gamma'_*\circ \Gamma_*=\mathrm{id}_X$ and $\mathrm{id}_Y=\Gamma_*\circ \Gamma'_*$. But $\Gamma'\circ \Gamma$ is the sum of the identity correspondence  and correspondences whose projections to $X$ are of codimension $>0$ in $X$ (and similarly for $Y$). The statement now follows from Lemma \ref{lemma_moving}.
\end{proof}


\subsection{Chow groups with coefficients in Milnor K-theory of rationally connected varieties of degree zero are of finite exponent}
In this section we fix the following notation.
Let $k$ be a field. Let $X_k$ be a proper $k$-scheme. If $F/k$ is a field extension then let $X_F=X_k\times_k F$.
\begin{theorem}\label{proposition_calc_rc_alg_closed}
If $X_k$ is rationally connected, then $$\CH^{d+j}(X_{\bar k},j)\cong K^M_j(\bar k).$$
\end{theorem}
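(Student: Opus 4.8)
The plan is to prove that the degree map $p_*\colon \CH^{d+j}(X_{\bar k},j)\to K^M_j(\bar k)$ is itself the asserted isomorphism. Via Proposition \ref{identificationwithZero-cycles with coefficients in Milnor K-theory}(i) I identify $\CH^{d+j}(X_{\bar k},j)\cong A_0(X_{\bar k},j)$, so that a class is represented by a finite sum $\sum_i \beta_i\cdot[y_i]$ with $y_i$ closed points (all of residue field $\bar k$, since $\bar k$ is algebraically closed) and $\beta_i\in K^M_j(\bar k)$, and $p_*$ sends it to $\sum_i\beta_i$. Fixing a $\bar k$-rational point $x_0\in X_{\bar k}$, the inclusion $\iota_{x_0}\colon\Spec\bar k\to X_{\bar k}$ induces $s:=(\iota_{x_0})_*\colon K^M_j(\bar k)\to A_0(X_{\bar k},j)$, $\beta\mapsto\beta\cdot[x_0]$, and functoriality of pushforward together with $p\circ\iota_{x_0}=\op{id}$ gives $p_*\circ s=\op{id}$. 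Thus $p_*$ is surjective and $s$ is injective; it remains to see that $s$ is also surjective, for then $s$ is an isomorphism and $p_*=s^{-1}$.

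Since $A_0(X_{\bar k},j)$ is generated by the classes $\beta\cdot[y]$, surjectivity of $s$ reduces to the key claim that $\beta\cdot[y]=\beta\cdot[x_0]$ for every closed point $y$ and every $\beta\in K^M_j(\bar k)$; equivalently, that the class $\beta\cdot[y]$ is independent of $y$. Here I would invoke rational connectedness: any two closed points of $X_{\bar k}$ lie in the image of a morphism $f\colon\bb P^1_{\bar k}\to X_{\bar k}$, so I may choose points $P,Q\in\bb P^1_{\bar k}$ with $f(P)=x_0$ and $f(Q)=y$. The morphism $f$ is proper ($\bb P^1_{\bar k}$ is proper and $X_{\bar k}$ separated over $\bar k$), so by Definition \ref{definition_correspondences}(i) and Lemma \ref{lemma_correspondences_compatibility}(i) it induces $f_*\colon A_0(\bb P^1_{\bar k},j)\to A_0(X_{\bar k},j)$ carrying $\beta\cdot[P]\mapsto\beta\cdot[x_0]$ and $\beta\cdot[Q]\mapsto\beta\cdot[y]$ (the relevant norms being $N_{\bar k/\bar k}=\op{id}$).

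It therefore suffices to know that $\beta\cdot[P]=\beta\cdot[Q]$ already in $A_0(\bb P^1_{\bar k},j)$. This is immediate from the computation $A_0(\bb P^1_{\bar k},j)\cong K^M_j(\bar k)$ of Proposition \ref{proposition_computationP1k}, under which both classes map to $\beta$; concretely, choosing $g\in\bar k(\bb P^1)^{\times}$ with divisor $P-Q$ and applying the tame-symbol differential to $\{g\}\cdot\beta\in K^M_{j+1}(\bar k(\bb P^1))$ exhibits $\beta\cdot[P]-\beta\cdot[Q]$ as a boundary. Pushing forward along $f$ then gives $\beta\cdot[x_0]=\beta\cdot[y]$, which completes the argument. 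In positive characteristic, where separable rational connectedness only guarantees that two closed points are joined by a connected chain of rational curves, I would run the same pushforward argument along each link and chain the resulting equalities together.

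The genuinely geometric input is the single invocation of rational connectedness producing the connecting curve $f$; everything else is the formal functoriality of Rost's complexes. Accordingly, the main point to pin down carefully is the behaviour of $f_*$ at the level of the cokernel $A_0$ — that it is well defined (compatibility with the tame-symbol differential, Lemma \ref{lemma_correspondences_compatibility}(i)) and that it really carries $\beta\cdot[P]$ to $\beta\cdot[f(P)]$ with trivial norm factor — together with verifying that the hypothesis ``rationally connected'' is being used precisely in the form of a (chain of) rational curve(s) through a prescribed pair of $\bar k$-points.
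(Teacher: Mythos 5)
Your proposal is correct and follows essentially the same route as the paper's (much terser) proof: identify $\CH^{d+j}(X_{\bar k},j)$ with $A_0(X_{\bar k},j)$ via Proposition \ref{identificationwithZero-cycles with coefficients in Milnor K-theory}(i), use rational connectedness to join any closed point to a fixed base point by a rational curve, and transport the $\bb P^1$-computation of Proposition \ref{proposition_computationP1k} along the proper pushforward $f_*$. The only difference is that you spell out the details the paper leaves implicit (the section $s=(\iota_{x_0})_*$, the triviality of the norm factors, and the tame-symbol boundary $\partial(\{g\}\cdot\beta)=\beta\cdot[P]-\beta\cdot[Q]$), which is exactly the intended argument.
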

\begin{proof}
By Proposition \ref{identificationwithZero-cycles with coefficients in Milnor K-theory}(i) we have that
$$\CH^{d+j}(X_{\bar k},j)\cong \mathrm{coker}[\bigoplus_{x\in X_{\bar k(1)}}K^M_{j+1}(x)\to \bigoplus_{x\in X_{\bar k(0)}}K^M_{j}(x)].$$
The assertion therefore follows from the definition of rational connectedness and Proposition \ref{proposition_computationP1k}.
\end{proof}
\begin{corollary}\label{corollary_torsion}
If $X_k$ is rationally connected, then $\CH^{d+j}(X_k,j)_0$ is torsion.
\end{corollary}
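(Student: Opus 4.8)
The plan is to deduce this immediately from Theorem \ref{proposition_calc_rc_alg_closed} via a standard transfer argument, passing through the algebraic closure and then descending to a finite subextension by a continuity property of higher Chow groups. The key point is that rational connectedness forces the degree-zero part to vanish entirely after base change to $\bar k$, so every degree-zero class becomes zero at a finite level and is therefore killed by the degree of that extension.

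Concretely, I would start with an arbitrary class $\alpha \in \CH^{d+j}(X_k,j)_0$, that is, with $p_*\alpha = 0$ in $K^M_j(k)$. First I would observe that flat pullback along $X_{\bar k} \to X_k$ carries $\alpha$ into the degree-zero part over $\bar k$: this is exactly the commutativity of the flat-pullback square relating the two push-forwards $p_*$ to $K^M_j(k)$ and $K^M_j(\bar k)$ recorded just before the definition of $\CH^{d+j}(X_k,j)_0$, since the image of $p_*\alpha = 0$ under $K^M_j(k) \to K^M_j(\bar k)$ is again $0$. Thus the pullback of $\alpha$ lies in $\CH^{d+j}(X_{\bar k},j)_0$. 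By Theorem \ref{proposition_calc_rc_alg_closed} the push-forward $p_*\colon \CH^{d+j}(X_{\bar k},j) \to K^M_j(\bar k)$ is an isomorphism, so its kernel $\CH^{d+j}(X_{\bar k},j)_0$ is zero, and hence the pullback of $\alpha$ to $X_{\bar k}$ vanishes.

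Next I would use that $\bar k = \varinjlim_F F$ is a filtered colimit of finite extensions $F/k$, together with the fact that Bloch's higher Chow groups commute with filtered colimits of the base field, to conclude $\CH^{d+j}(X_{\bar k},j) \cong \varinjlim_F \CH^{d+j}(X_F,j)$. Since the image of $\alpha$ in this colimit is zero, the pullback of $\alpha$ must already vanish in $\CH^{d+j}(X_F,j)$ for some finite subextension $F/k$, say of degree $m = [F:k]$. Finally, Proposition \ref{proposition_push_pull} identifies the composite of pullback $\CH^{d+j}(X_k,j) \to \CH^{d+j}(X_F,j)$ followed by push-forward back to $\CH^{d+j}(X_k,j)$ with multiplication by $m$; applying it to $\alpha$ and using that its pullback to $X_F$ is zero gives $m\alpha = 0$. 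Hence $\alpha$ is torsion, and as $\alpha$ was arbitrary, $\CH^{d+j}(X_k,j)_0$ is torsion.

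The argument is essentially formal once Theorem \ref{proposition_calc_rc_alg_closed} is in hand, so the only genuinely non-trivial input is the continuity statement that $\CH^{d+j}(-,j)$ commutes with the filtered colimit defining $\bar k$; this is where I would be careful, citing the continuity property of higher Chow groups and noting that the full algebraic closure is used (so that inseparable phenomena are absorbed and every subextension is finite). Everything else reduces to bookkeeping with the two compatibility diagrams already established for $p_*$ and to the projection-formula computation encapsulated in Proposition \ref{proposition_push_pull}. I would not expect to obtain a uniform exponent from this argument alone; the stronger finite-exponent conclusion of Theorem \ref{theorem_intro_finite_exponent} requires the correspondence machinery and is proved separately.
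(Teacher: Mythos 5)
Your proposal is correct and follows essentially the same route as the paper's proof: pull back to $X_{\bar k}$, use the commutative square for $p_*$ together with Theorem \ref{proposition_calc_rc_alg_closed} to see the pullback vanishes, descend to a finite extension $F/k$, and kill $\alpha$ by $[F:k]$ via Proposition \ref{proposition_push_pull}. The only difference is that you spell out the continuity (filtered colimit) step that the paper leaves implicit, which is a reasonable point to make explicit.
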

\begin{proof}
Let $\alpha\in\CH^{d+j}(X_k,j)_0$. It follows from the commutativity of the diagram
$$\xymatrix{
\CH^{d+j}(X_{\bar{k}},j)  \ar[r] &  K^M_j(\bar{k}) \\
\CH^{d+j}(X_{k},j) \ar[u]  \ar[r]^{}  & K^M_j(k), \ar[u]    \
}$$
and Proposition \ref{proposition_calc_rc_alg_closed} that there is a finite field extension $F$ of $k$ such that $\pi^*(\alpha)=0$ for $\pi:X_F\to X_k$. Let $n=[F:k]$. Then, by Proposition \ref{proposition_push_pull}, $n\cdot \alpha=0$.
\end{proof}

The following theorem is due to Colliot-Th\'el\`ene for $j=0$ \cite[Prop. 11]{CT2005}.
\begin{theorem}\label{theorem_finite_exponent}
If $X_k$ is rationally connected, then $\CH^{d+j}(X_k,j)_0$ is of finite exponent.
\end{theorem}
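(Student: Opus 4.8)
The plan is to upgrade the torsion statement of Corollary \ref{corollary_torsion} to a \emph{uniform} bound by means of a Bloch--Srinivas decomposition of the diagonal, fed through the correspondence action on $A_0(X_k,j)$ constructed in Section \ref{section_correspondences}.

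First I would secure the geometric input over the function field. Since rational connectedness is preserved under base field extension, $X_{k(X)}$ is rationally connected over $k(X)$, and hence so is $X_{\overline{k(X)}}$ over the algebraically closed field $\overline{k(X)}$. Applying Theorem \ref{proposition_calc_rc_alg_closed} with $j=0$ over the base $k(X)$ gives $\CH^d(X_{\overline{k(X)}})\cong\bb Z$; that is, the degree-zero subgroup $\CH_0(X_{\overline{k(X)}})_0$ vanishes. This is precisely the hypothesis for the decomposition of the diagonal: there are a positive integer $N$, a divisor $D\subsetneq X$ and a closed point $x_0\in X$ together with an equality in the ordinary Chow group
$$N\cdot[\Delta_X]=\Gamma_1+\Gamma_2\in\CH_d(X\times X),$$
where $\Gamma_1$ is supported on $D\times X$ and $\Gamma_2$ is supported on $X\times x_0$. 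Note that this is a statement about classical cycles and is valid in arbitrary characteristic.

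Next I would let both sides act on $A_0(X_k,j)\cong\CH^{d+j}(X_k,j)$ through the correspondence action of Definition \ref{definition_correspondence}; since $(\Delta_X)_*=\mathrm{id}$, this gives
$$N\cdot\mathrm{id}=(\Gamma_1)_*+(\Gamma_2)_*\quad\text{on }A_0(X_k,j).$$
I then evaluate the two terms on $\CH^{d+j}(X_k,j)_0=\ker(p_*)$. For $\Gamma_1$: given a class $\alpha$, Lemma \ref{lemma_moving} applied with $Y=D$ lets me choose a representative supported on $X\setminus D$, so that $p_1^*\alpha$ is supported on $(X\setminus D)\times X$, disjoint from $\supp(\Gamma_1)\subseteq D\times X$; as the intersection product of classes with disjoint support vanishes, $(\Gamma_1)_*(\alpha)=p_{2,*}(p_1^*\alpha\cup\Gamma_1)=0$. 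For $\Gamma_2$: the action collapses to sending $\alpha$ to the class at $x_0$ determined by the degree $p_*(\alpha)\in K^M_j(k)$, which is zero exactly when $\alpha\in\ker(p_*)$. Hence $N\cdot\alpha=0$ for all $\alpha\in\CH^{d+j}(X_k,j)_0$, proving that this group has exponent dividing $N$.

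The main obstacle I anticipate is not the diagonal decomposition itself --- which lives in ordinary Chow groups --- but the bookkeeping required to transport it through Rost's formalism: one must verify that the product $\cup$ of two classes with disjoint supports on $X\times X$ really vanishes in $A_*(X\times X,\cdot)$, and one must identify $(\Gamma_2)_*$ with the ``degree at $x_0$'' map, keeping careful track of the residue-field extension $k(x_0)/k$ and the attendant norm in Milnor K-theory. Once these compatibilities are in place the argument closes with the single integer $N$ produced by the decomposition.
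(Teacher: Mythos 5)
Your proposal is correct and is essentially the paper's own argument: the paper likewise proves the theorem by a decomposition of the diagonal (obtained there by spreading out the R-equivalence $\eta\sim P$ over $\overline{k(X_k)}$, which is exactly the input your citation of Theorem \ref{proposition_calc_rc_alg_closed} feeds into Bloch--Srinivas), then letting $N\Delta=\Gamma_1+\Gamma_2$ act via the correspondence formalism of Section \ref{section_correspondences}, killing the term supported over a proper closed subset by Lemma \ref{lemma_moving}, and identifying the remaining term with the degree/norm map, which vanishes on $\CH^{d+j}(X_k,j)_0$. The only differences are cosmetic: the paper first reduces to $X_k(k)\neq\emptyset$ by a trace argument rather than tracking the residue field of $x_0$, and it writes its supports transposed relative to yours ($Z$ on $X_k\times Y$ and $P\times_k X_k$), whereas your placement of $\Gamma_1$ on $D\times X$ and $\Gamma_2$ on $X\times x_0$ is the one actually consistent with the stated action $\Gamma_*(\alpha)=p_{2*}(p_1^*\alpha\cdot\Gamma)$.
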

\begin{proof}
Let $\eta$ be the generic point of $X_k$ and $L=K(X_k)$. By a trace argument we may assume that $X_k(k)\neq \emptyset$. Indeed, let $F$ be a finite field extension of $k$ such that $X_F(F)\neq \emptyset$ and assume that $n\cdot \CH^{d+j}(X_F,j)_0=0$ for some $n>0$. Let $m:=[F:k]$. Then the composition
$$\CH^{d+j}(X_k,j)_0\to \CH^{d+j}(X_F,j)_0\to \CH^{d+j}(X_k,j)_0$$ 
is multiplication by $m$ by Proposition \ref{proposition_push_pull} and therefore $nm\cdot \CH^{d+j}(X_k,j)_0=0$. Therefore let $P\in X_k(k)$. Let $\Omega$ be the algebraic closure of $L$. When base changed to $X_\Omega$, the points $\eta$ and $P$ are $R$-equivalent in $X(\Omega)$ and therefore $\eta=P_\Omega\in \CH_0(X_\Omega)$. This equivalence already holds over some finite extension of $L$ and therefore, again by a trace argument, there is an $n>0$ such that $n\cdot(\eta-P_L)=0\in \CH_0(X_L)$. This implies that 
$$n(\Delta-P\times_k X_k)= Z\in \CH^d(X_k\times X_k),$$
where $Z$ is a cycle supported on $X_k\times Y$ and $Y$ is a subscheme of codimension at least $1$ in $X$. By Section \ref{section_correspondences} we get an action of the correspondence
$$n(\Delta-P\times_k X)_*= Z_*:A_0(X_k,j)\to A_0(X_k,j).$$
Lemma \ref{lemma_moving} implies that $Z_*=0$ and therefore $n\Delta_*=n(P\times_k X)_*$. 
But $(P\times_k X)_*(z)=0$ if $z\in \CH^{d+j}(X_k,j)_0$.
\end{proof}

\begin{corollary}\label{corollary_torsion_fe}
Let $K$ be a $p$-adic field and $X_K$ be a smooth projective, geometrically integral variety over $K$. Then the group ${\CH^{d+j}(X_K,j)_0}_{l-\mathrm{tors}}$ is a finite for $l\neq p$. 

\end{corollary}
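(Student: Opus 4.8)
The plan is to realise the $l$-primary torsion inside \'etale cohomology and then to quote the known finiteness over $p$-adic fields. The organising principle is that a cofinitely generated $\bb Z_l$-module $M$ is abstractly $(\bb Q_l/\bb Z_l)^r\oplus F$ with $F$ finite, so that $M_{l-\mathrm{tors}}$ is finite exactly when $M$ contains no nonzero divisible subgroup. Accordingly I would establish two things: (a) ${\CH^{d+j}(X_K,j)_0}_{l-\mathrm{tors}}$ is cofinitely generated over $\bb Z_l$; and (b) its maximal divisible subgroup is zero. Since the torsion of the target $K^M_j(K)$ of the degree map is finite, finiteness for $\CH^{d+j}(X_K,j)$ and for its degree-zero subgroup $(-)_0$ are equivalent, so I may work with the full group.

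For (a) I use the identification $\CH^{d+j}(X_K,j)\cong A_0(X_K,j)\cong H^d(X_K,\mathcal K^M_{d+j})$ from Proposition \ref{identificationwithZero-cycles with coefficients in Milnor K-theory}(i), together with the Bloch--Kato isomorphism of Zariski sheaves $\mathcal K^M_{d+j}/l^m\cong \mathcal H^{d+j}(\mu_{l^m}^{\otimes(d+j)})$, available because $l\neq p$. The Bockstein sequences for multiplication by $l^m$ identify the $l$-primary torsion of $H^d(X_K,\mathcal K^M_{d+j})$ with a subquotient of $\varinjlim_m H^{d-1}(X_K,\mathcal H^{d+j}(\mu_{l^m}^{\otimes(d+j)}))$, and by the Bloch--Ogus coniveau spectral sequence the latter is a subquotient of $H^{2d+j-1}_{et}(X_K,\bb Q_l/\bb Z_l(d+j))$. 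Over a $p$-adic field this group is cofinitely generated: in the Hochschild--Serre spectral sequence only the rows $a\in\{0,1,2\}$ survive (as $\mathrm{cd}_l(K)=2$), each geometric term $H^b_{et}(X_{\bar K},\bb Q_l/\bb Z_l(d+j))$ is cofinitely generated and vanishes for $b>2d$, and one passes to $G_K$-cohomology. This gives (a).

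For (b) I would compute the corank of each Hochschild--Serre graded piece $H^a(G_K,W_b)$, where $W_b=H^b_{et}(X_{\bar K},\bb Q_l(d+j))$ and $a+b=2d+j-1$. Because $\mathrm{cd}_l(K)=2$ and the local Euler characteristic vanishes for $l\neq p$, one has $\dim H^1(G_K,W_b)=\dim H^0(G_K,W_b)+\dim H^2(G_K,W_b)$, with $H^2(G_K,W_b)$ dual to $H^0(G_K,W_b^{\vee}(1))$; hence the entire corank is governed by the Galois invariants $H^0(G_K,W_b)$ and $H^0(G_K,W_b^{\vee}(1))$. A weight count, using Deligne's purity together with the monodromy--weight structure and the vanishing of $H^b(X_{\bar K})$ for $b>2d$, shows that for $j\geq 1$ the Frobenius weights on every relevant $W_b$ and $W_b^{\vee}(1)$ are nonzero, so all these invariants vanish, the ambient \'etale corank is $0$, and finiteness follows formally from (a). For $j=0$ the full group $H^{2d-1}_{et}(X_K,\bb Q_l/\bb Z_l(d))$ does acquire a divisible part, coming from the piece $H^2(G_K,H^{2d-2}_{et}(X_{\bar K},\bb Q_l(d)))$, i.e. from divisor (Tate) classes; this contribution sits in a shallower step of the coniveau filtration than the cycle-theoretic subquotient computing $A_0$, and its absence from that subquotient is exactly the finiteness theorem of Saito--Sato \cite{SS10}, which I would invoke directly.

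I expect step (b) to be the main obstacle. The weight bookkeeping is clean only under weight--monodromy, which over $p$-adic fields is not known in full generality; one either restricts to the known cases or, following Saito--Sato, replaces the weight argument by an unconditional Kato-homology finiteness statement. The essential point is thus to show that the divisible part of the ambient \'etale cohomology is never seen by the deepest coniveau (zero-cycle) layer that computes $A_0(X_K,j)$.
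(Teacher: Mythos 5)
Your step (b) is a genuine gap, and it sits exactly where the difficulty of the statement lies. For $j\geq 1$ you reduce the vanishing of the divisible part to a weight count that needs the monodromy--weight structure, which is not known over $p$-adic fields in the generality required, and your fallback for $j=0$ is to invoke the Saito--Sato finiteness theorem itself; so the argument is conditional for $j\geq 1$ and essentially circular for $j=0$. You admit this yourself, but the point is sharper than a technical caveat: your proof never uses the hypothesis that actually does the work. The corollary lives in the section on rationally connected varieties, and the paper's proof invokes Theorem \ref{theorem_finite_exponent} (for $X_K$ rationally connected, $\CH^{d+j}(X_K,j)_0$ has finite exponent, proved by a decomposition-of-the-diagonal correspondence argument). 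A group of finite exponent has no nonzero divisible subgroup, so that theorem is precisely the unconditional substitute for your step (b). Indeed, your step (a) (cofinite generation of the $l$-primary torsion via Gersten, Bloch--Kato and the coniveau spectral sequence, plus cofinite generation of \'etale cohomology over $p$-adic fields) combined with Theorem \ref{theorem_finite_exponent} would already close the proof, since a cofinitely generated $l$-primary group of bounded exponent is finite; the long weight analysis is then unnecessary.

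For comparison, the paper's route avoids coniveau bookkeeping on the generic fiber altogether. It first proves that $\CH^{d+j}(X_K,j,\bb Z/l^r)$ is finite: pass to a semistable model $X$, use the \'etale cycle class isomorphism $\CH^{d+j}(X,j,\bb Z/l^r)\cong H_{et}^{2d+j}(X,\bb Z/l^r(d+j))$ (Saito--Sato for $j=0$, Kerz--Saito for $j=1$, cohomological dimension for $j>1$), the localisation sequence $\CH^{d+j}(X,j,\bb Z/l^r)\to \CH^{d+j}(X_K,j,\bb Z/l^r)\to \CH^{d+j-1}(X_k,j-1,\bb Z/l^r)$, and Gabber's refined uniformisation to handle a general $X_K$. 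Then the bounded exponent supplied by Theorem \ref{theorem_finite_exponent} makes the map ${\CH^{d+j}(X_K,j)_0}_{l-\mathrm{tors}}\to \CH^{d+j}(X_K,j,\bb Z/l^r)$ injective for $r\gg 0$, and finiteness follows. Your (a) is a plausible alternative to the model-theoretic finiteness step (modulo care with torsion in the Milnor K-sheaves when running the Bockstein), but without Theorem \ref{theorem_finite_exponent} or an unconditional replacement for weight--monodromy, your proof does not close.
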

\begin{proof}
First assume that $X_K$ has a semistable model $X$ whose special fiber $X_k$ is a simple normal crossing divisor. We recall that the \'etale cycle class map
$$\CH^{d+j}(X,j,\bb Z/l^r)\to H_{\et}^{2d+j}(X,\bb Z/l^r(d+j))$$
is an isomorphism for all $j\geq 0$. For $j=0$ this is due to Saito and Sato \cite[Thm. 0.6]{SS10}. For $j=1$ this follows from the Kato conjectures proved by Kerz and Saito \cite[Thm. 9.3]{KeS12}. For $j>1$ this follows from cohomological dimension.
By the localisation sequence for higher Chow groups the sequence
$$\CH^{d+j}(X,j,\bb Z/l^r)\to \CH^{d+j}(X_K,j,\bb Z/l^r)\to\CH^{d+j-1}(X_k,j-1,\bb Z/l^r)$$
is exact. The finiteness of $H_{\et}^{2d+j}(X,\bb Z/l^r(d+j))$ and $\CH^{d+j-1}(X_k,j-1,\bb Z/l^r)$ implies that the group $\CH^{d+j}(X_K,j,\bb Z/l^r)$ is finitely generated. The case of a general $X_K$ now follows from Gabber's refined uniformisation. 

Since by Theorem \ref{theorem_finite_exponent} the group $\CH^{d+j}(X_K,j)_{0l-\mathrm{tors}}$ is of bounded exponent, the map 
$$\CH^{d+j}(X_K,j)_{0l-\mathrm{tors}}\to\CH^{d+j}(X_K,j)/l^r=\CH^{d+j}(X_K,j,\bb Z/l^r)$$
is injective for $r$ sufficiently large. Since, as we have seen above, the group $\CH^{d+j}(X_K,j,\bb Z/l^r)$ is finitely generated, this implies that $\CH^{d+j}(X_K,j)_{0l-\mathrm{tors}}$ is finite.
\end{proof}

\begin{remark} We expect that even more holds, i.e. that the group $\CH^{d+j}(X_K,j)_0$ of Corollary \ref{corollary_torsion_fe} is a direct sum of a finite group and a $p$-primary group of finite exponent. For $j=0$ this is shown in \cite[Cor. 0.5]{SS10}. \end{remark}

\subsection{Bloch-Beilinson conjectures for higher Chow groups}\label{subsection_BB}
Let $X$ be a variety over a field. In \cite{Bl86} Bloch defines a generalisation of Chow groups, so called higher Chow groups, as a candidate for motivic cohomology. These groups are defined as the homology groups of a certain complex
$$\dots\to z^*(X,n+1)\xrightarrow{\partial_{n+1}} z^*(X,n)\xrightarrow{\partial_n} z^*(X,n-1)\xrightarrow{\partial_{n-1}}\dots .$$
Here $\Delta^n:= \Spec(k[t_0,\dots,t_n]/\sum t_i-1)$ and $z^*(X,n)\subset z^*(X\times \Delta^n)$ is the subset of codimension $*$ cycles meeting all faces properly (for more details see \textit{loc. cit.}). More precisely, $$\CH^*(X,n):=\ker[ z^*(X,n)\xrightarrow{\partial_{n}} z^*(X,n-1)]/\text{im}[z^*(X,n+1)\xrightarrow{\partial_{n+1}} z^*(X,n)].$$ 
In particular, $\CH^r(X,0)\cong\CH^r(X)$ for all $r\geq 0$.

Bloch's higher Chow groups have the following properties: let $f:X\to Y$ be a morphism of $k$-varieties.
\begin{enumerate}
\item If $f$ is flat, then there is a pullback map $f^*:\CH^p(Y,q)\to \CH^p(X,q)$.
\item If $f$ is proper, then there is a push forward map $f_*:\CH^{p+d}(X,q)\to \CH^p(Y,q)$, where $d$ is the relative dimension of $X$ over $Y$.
\item There is a product $\CH^p(X,q) \otimes \CH^r(Y,s)\to \CH^{p+r}(X\times Y,q+s)$. If $X=Y$ is smooth, then pulling back along the diagonal gives a product $\CH^p(X,q) \otimes \CH^r(X,s)\to \CH^{p+r}(X,q+s)$.
\end{enumerate}

These properties allow to define an action of correspondences on higher Chow groups:
\begin{definition}
Let $d=\dim X$ and $d'=\dim Y$. Let $X$ furthermore be projective, which implies that the projection $p_2:X\times Y\to Y$ to the second component is proper (we denote the first projection by $p_1$). 
A correspondence $\Gamma\in\CH^a(X\times Y)$ defines a morphism
$$\Gamma_*:\CH^p(X,q) \xto{p_1^*} \CH^{p}(X\times Y,q)\xto{\otimes \Gamma} \CH^{p+a}(X\times Y,q)\xto{{p_2}_*} \CH^{p+a-d}(Y,q) $$
$$x\mapsto p_1^*(x)\mapsto p_1^*(x)\otimes \Gamma \mapsto {p_2}_*(p_1^*(x)\otimes \Gamma). $$
Note that for $X=Y$ and $a=d=\dim X$, this simplifies to 
$$\CH^d(X\times X)\times \CH^p(X,q)\to \CH^p(X,q)$$
$$(\Gamma,x)\mapsto p_{2*}(p_1^*x\cdot\Gamma).$$
\end{definition}

The following conjecture proposes a generalisation of conjectures due to Bloch and Beilinson for Chow groups (see for example \cite[Conj. 2.1]{Jannsen1994} and \cite[Sec. 5]{Beilinson1987}) to higher Chow groups.
\begin{conjecture}\label{conjecture_BB}
For every smooth projective variety $X$ over a field $k$ there exists a filtration
$$\dots\subset F^i\subset\dots\subset F^1\subset F^0=\CH^p(X,q)_{\bb Q}$$
satisfying the following conditions:
\begin{enumerate}
\item[(a)] $F^1\CH^p(X,0)_{\bb Q}=\CH^p(X)_{\rm hom \bb Q}$ for some fixed Weil cohomology $H^\bullet(X)$,
\item[(b)] $F^r\CH^p(X,q)_{\bb Q}\cdot F^s\CH^{p'}(X,q')_{\bb Q}\subset F^{r+s}\CH^{p+p'}(X,q+q')_{\bb Q}$ under the intersection product,
\item[(c)] $F^\bullet$ is respected by $f^*$ and $f_*$ for morphisms $f:X\to Y$,
\item[(d)] $F^i\CH^p(X,q)_{\bb Q}=0$ for $i>>0$.
\end{enumerate}
\end{conjecture}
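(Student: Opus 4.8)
We indicate the strategy one would follow to establish Conjecture \ref{conjecture_BB}; since the statement is itself a conjecture, this is a proposed line of attack rather than a proof, and it reduces the problem to the standard open conjectures of Murre and Bloch--Beilinson, transported to higher Chow groups by means of the correspondence action set up in Section \ref{section_correspondences}.

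The plan is to manufacture the filtration out of a Chow--K\"unneth decomposition of the diagonal. Assume that $X$ carries mutually orthogonal idempotents $\pi_0,\dots,\pi_{2d}\in\CH^d(X\times X)_{\bb Q}$ with $\sum_i\pi_i=\Delta_X$ and with $\pi_i$ realizing the projector onto $H^i(X)$ for the fixed Weil cohomology appearing in (a). Each $\pi_i$ acts on $\CH^p(X,q)_{\bb Q}$ through the action of correspondences on higher Chow groups defined above. Writing $n=2p-q$ for the cohomological degree in the identification $\CH^p(X,q)\cong H^n_M(X,\bb Q(p))$, we would set
$$F^\nu\CH^p(X,q)_{\bb Q}:=\bigcap_{l=0}^{\nu-1}\ker\bigl(\pi_{n-l}\bigr),$$
so that $F^0$ is the whole group and $F^1=\ker(\pi_n)$.

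The first task is to verify the formal axioms. For (a) one has $q=0$, hence $n=2p$; using that $\pi_{2p}$ computes the cycle class in top degree, $F^1\CH^p(X,0)_{\bb Q}=\ker(\pi_{2p})$ is exactly the homologically trivial part $\CH^p(X)_{\mathrm{hom}\,\bb Q}$. Axiom (c) follows once the decompositions for $X$ and $Y$ are chosen compatibly with $f^*$ and $f_*$: the correspondence action is functorial by Lemma \ref{lemma_correspondences_compatibility} together with the product formalism of Section \ref{section_correspondences}, so a morphism intertwines the $\pi_i$ up to the expected degree shift and therefore respects $F^\bullet$. Axiom (b) reduces to choosing the decomposition multiplicatively, i.e.\ so that the external product of $\pi_i$ and $\pi_j$ refines $\pi_{i+j}$ on $X\times X$; this is the refined form of Murre's conjecture. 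Axiom (d) requires the conjectural vanishing of the projectors of the wrong weight, namely $\pi_m=0$ on $\CH^p(X,q)_{\bb Q}$ for $m$ outside a bounded interval around $n$ (the analogue of the vanishing $\pi_m=0$ on $\CH^j(X)$ for $m<j$ or $m>2j$). Granting this, only finitely many projectors act nontrivially and their sum is the identity on the group, so the intersection defining $F^\nu$ becomes zero for $\nu\gg0$.

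The hard part is exactly what keeps this statement a conjecture. Unconditionally one cannot even produce the idempotents $\pi_i$: their existence is Murre's conjecture, known only for curves, surfaces, abelian varieties and a handful of further cases. Moreover the wrong-weight vanishing used in (d), together with the independence of $F^\bullet$ from the chosen decomposition, amounts to the Bloch--Beilinson principle that a homologically trivial correspondence acts nilpotently on $\CH^\bullet(X,\bullet)_{\bb Q}$; this would follow from the standard conjectures combined with Kimura--O'Sullivan finite-dimensionality, neither of which is available. A cleaner but equally inaccessible route would obtain $F^\bullet$ as the slice (coniveau) filtration coming from a motivic $t$-structure on $\mathrm{DM}(k)_{\bb Q}$, whose existence is itself conjectural. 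The realistic outcome of this plan is therefore only a conditional theorem, valid for those $X$ that admit a multiplicative Chow--K\"unneth decomposition and for which the nilpotence conjecture is known, which is precisely why the statement is recorded here as a conjecture.
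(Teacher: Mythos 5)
There is nothing in the paper for your proposal to be measured against: Conjecture \ref{conjecture_BB} is stated as a conjecture, and the paper offers no proof of it, conditional or otherwise. The only supporting evidence the paper records is the unlabeled remark immediately following the statement, which uses Corollary \ref{corollary_torsion} to verify axiom (d) alone, in the special case of a rationally connected variety: there $\CH^{d+j}(X_k,j)_0$ is torsion, so $\CH^{d+j}(X_k,j)_{\bb Q}\cong K^M_j(k)_{\bb Q}$ and one may take the trivial filtration $F^1=F^2=\dots=0$ for $p=d+j$, $q=j$. Your text correctly recognizes the conjectural status and lays out the standard Murre-style strategy: build $F^\bullet$ from a Chow--K\"unneth decomposition acting through the correspondence formalism of Section \ref{section_correspondences}, and reduce the axioms to Murre's conjecture together with a wrong-weight vanishing/nilpotence statement. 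This is a sensible and historically grounded reduction (for $q=0$ it is exactly Jannsen's theorem that Murre's conjecture for all smooth projective varieties is equivalent to the existence of a Bloch--Beilinson filtration), and you are right that each input is open in general. Two caveats on your outline: the filtration you write down depends on the chosen projectors, so even conditionally you must separately argue independence of that choice --- this is not subsumed by verifying (a)--(d) and is where the nilpotence conjecture genuinely enters; and axiom (c) requires compatible decompositions across \emph{all} morphisms $f:X\to Y$, a substantially stronger demand than existence variety-by-variety, which your phrase ``chosen compatibly'' quietly absorbs. Neither caveat is a gap relative to the paper, since the paper proves nothing here; your proposal is best read as an elaboration of why the statement is recorded as a conjecture, plus a route that would make it a conditional theorem.
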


\begin{remark}
By Corollary \ref{corollary_torsion} we know that if $X_k$ is rationally connected of dimension $d$, then $\CH^{d+j}(X_k,j)_0$ is torsion. Therefore $\CH^{d+j}(X_k,j)_{\bb Q}\cong K^M_n(k)_{\bb Q}$ and $X_k$ satisfies Conjecture \ref{conjecture_BB}(d) for $F^1=F^2=\dots=0$ and $p=d+j,q=j$.
\end{remark}

\subsection{A cohomological theory of cycle complexes with coefficients in Milnor K-theory}\label{section_cohomological_theory_of_cycle_complexes}
In \cite{BO74} Bloch and Ogus define the notion of a Poincar\'e duality theory with supports which consists of a (twisted) homology theory and a (twisted) cohomology theory with supports which coincide for regular schemes by Poincar\'e duality.
We have already remarked that Rost's Chow groups with coefficients in Milnor K-theory have the properties of a Borel-Moore homology theory, i.e, the following:
they are covariantly functorial for proper maps, contravariantly functorial for flat maps, there exists a long exact localisation sequence and they are homotopy invariant.
In order to get a corresponding cohomological theory with supports of Chow groups with coefficients in Milnor K-theory, we suggest to replace the Gersten complex for Milnor K-theory by the Cousin resolution of Milnor K-theory. The cohomology groups of these Cousin complexes will reappear in Appendix \ref{section_coh_chow} where we define cohomological higher Chow groups of zero-cycles.

\begin{definition}
Let $X$ be a noetherian scheme and $\cal F$ a sheaf of abelian groups on $X$. The complex
$$\bigoplus_{x\in X^{(0)}}i_{x*}H^0_x(X,\cal F)\to \bigoplus_{x\in X^{(1)}}i_{x*}H^1_x(X,\cal F)\to \dots\to \bigoplus_{x\in X^{(d)}}i_{x*}H^d_x(X,\cal F) $$
is called the Cousin complex of $\cal F$ and denoted by $C^\bullet_{\cal F}(X)$.

Let $Z$ be a closed subscheme of $X$. The complex
$$\bigoplus_{x\in Z^{(0)}}i_{x*}H^0_x(X,\cal F)\to \bigoplus_{x\in Z^{(1)}}i_{x*}H^1_x(X,\cal F)\to \dots\to \bigoplus_{x\in Z^{(d)}}i_{x*}H^d_x(X,\cal F) $$
is called the Cousin complex of $\cal F$ on $X$ supported on $Z$ and denoted by $C^\bullet_{\cal F}(X)_Z$.
\end{definition}

By \cite[Ch. IV, Prop. 2.3]{Ha66} the Cousin complex depends functorially on $\cal F$ and is therefore contravariantly functorial for morphisms of schemes in the following sense: if $f:X\to Y$ is a morphism of schemes, $\cal F$ a sheaf on $X$ and $\cal G$ a sheaf on $Y$ and if $\cal F\to f_*\cal G$ is a morphism of sheaves, then there is an induced morphism of Cousin complexes
$f^*:C^\bullet_{\cal F}(Y)\to C^\bullet_{\cal G}(X).$ The following proposition is immediate from is the definition.

\begin{proposition}[Localisation]
Let $X$ be a noetherian scheme and $Y$ a closed subscheme of $X$. Let $U=X\setminus Y$ be the open complement. Let $j:U\hookrightarrow X$ denote the inclusion. Then the sequence of complexes
$$0\to C^\bullet_{\cal F}(X)_Z\to C^\bullet_{\cal F}(X)\to j_*C^\bullet_{\cal F}(U)\to 0$$
is exact.
\end{proposition}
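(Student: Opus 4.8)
The plan is to prove the statement degreewise: I would first check that in each cohomological degree the sequence is a short exact sequence of abelian sheaves on $X$, and then observe that the two maps commute with the differentials, so that the sequence is exact as a sequence of complexes. Throughout write $Z$ for the given closed subscheme, so $U=X\setminus Z$, and recall that for a point $x$ the superscript in $X^{(p)}$, $Z^{(p)}$, $U^{(p)}$ always refers to codimension measured in $X$. The combinatorial heart of the matter is the disjoint decomposition
$$X^{(p)}=Z^{(p)}\sqcup U^{(p)},$$
valid because every codimension-$p$ point of $X$ lies either in the closed set $Z$ or in its complement $U$, and because for $x\in U$ the local ring $\roi_{X,x}=\roi_{U,x}$ is unchanged, so that the codimension of $x$ in $U$ equals its codimension in $X$. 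Hence the degree-$p$ term $\bigoplus_{x\in X^{(p)}}i_{x*}H^p_x(X,\cal F)$ of the middle complex is literally the direct sum of the summands indexed by $Z^{(p)}$ and those indexed by $U^{(p)}$.

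To match the right-hand term I would identify the $U$-summands with the Cousin complex of $U$. For $x\in U$ the inclusion of the point into $X$ factors as $\{x\}\hookrightarrow U\xrightarrow{j}X$, whence $j_*i_{x*}=i_{x*}$ as functors into sheaves on $X$; and since $U$ is open in $X$, excision for local cohomology gives a canonical isomorphism $H^p_x(U,\cal F|_U)\cong H^p_x(X,\cal F)$. Consequently the degree-$p$ term $j_*\bigoplus_{x\in U^{(p)}}i_{x*}H^p_x(U,\cal F)$ of the right-hand complex is canonically identified with $\bigoplus_{x\in U^{(p)}}i_{x*}H^p_x(X,\cal F)$, and under this identification the degree-$p$ sequence is exactly the split short exact sequence attached to the partition $X^{(p)}=Z^{(p)}\sqcup U^{(p)}$.

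It then remains to check that the two maps are morphisms of complexes. The inclusion $C^\bullet_{\cal F}(X)_Z\to C^\bullet_{\cal F}(X)$ is compatible with the differentials because $Z$ is closed: if $x\in Z^{(p)}$ then $\overline{\{x\}}\subseteq Z$, so any $y\in X^{(p+1)}$ receiving a nonzero boundary component from $x$ already lies in $Z$; thus the differential carries $Z$-summands into $Z$-summands and $C^\bullet_{\cal F}(X)_Z$ is a genuine subcomplex. For the quotient map, the natural morphism $\cal F\to j_*(\cal F|_U)$ induces, by the functoriality of the Cousin complex recalled before the proposition (cf. \cite[Ch. IV, Prop. 2.3]{Ha66}), a morphism of complexes $j^*:C^\bullet_{\cal F}(X)\to j_*C^\bullet_{\cal F}(U)$ which in each degree is, via the excision identification above, the projection onto the $U$-summands. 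In particular the differential induced on the quotient $C^\bullet_{\cal F}(X)/C^\bullet_{\cal F}(X)_Z$ coincides with the differential of the Cousin complex of $U$, so the displayed sequence is an exact sequence of complexes.

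I do not expect a genuine obstacle here; the author rightly calls the statement immediate. The only points deserving attention are the excision isomorphism $H^p_x(U,\cal F|_U)\cong H^p_x(X,\cal F)$ for $x\in U$ and the compatibility of the local-cohomology boundary maps with restriction to the open subscheme $U$ --- and the latter is precisely what the cited functoriality of the Cousin complex provides.
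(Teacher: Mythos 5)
Your proof is correct, and it follows the same route the paper intends: the paper offers no argument beyond declaring the proposition ``immediate from the definition,'' and your degreewise verification --- the decomposition $X^{(p)}=Z^{(p)}\sqcup U^{(p)}$, the excision identification $H^p_x(U,\cal F|_U)\cong H^p_x(X,\cal F)$ together with $j_*i_{x*}=i_{x*}$ for $x\in U$, and the check that both maps are chain maps because $Z$ is closed under specialisation --- is exactly the unpacking of that definition. The only unstated (and harmless, since $X$ is noetherian) point is that $j_*$ commutes with the direct sums appearing in each degree.
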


\begin{proposition}[Poincar\'e duality]
Let $X$ be a smooth variety over a field $k$. Then
$$A_p(X,n)\cong H^{d-p}(X,\cal K^M_{n+d}).$$
\end{proposition}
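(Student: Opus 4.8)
The plan is to recognise Rost's cycle complex $C_{d-*}(X,n)$ as the Cousin complex of the Zariski sheaf $\cal K^M_{n+d}$, and then to use the Gersten conjecture for Milnor $K$-theory to conclude that this complex is a flasque resolution of that sheaf, so that its cohomology computes $H^*(X,\cal K^M_{n+d})$. First I would fix the bookkeeping. Since $X$ is smooth and integral of dimension $d$, a point of dimension $p$ has codimension $d-p$, so $X_{(p)}=X^{(d-p)}$; writing $m=n+d$ and $q=d-p$, the term $C_p(X,n)=\bigoplus_{x\in X_{(p)}}K^M_{n+p}(x)$ becomes $\bigoplus_{x\in X^{(q)}}K^M_{m-q}(x)$. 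This is exactly the term in codimension $q$ of the Cousin complex of $\cal K^M_m$, once one identifies $H^q_x(X,\cal K^M_m)\cong K^M_{m-q}(k(x))$ by purity, the Rost differential agreeing with the coniveau differential because both are given by the tame symbol.

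Next I would invoke the Gersten conjecture for Milnor $K$-theory, which is available for $X$ smooth over a field $k$ (see Appendix \ref{appendix_Gersten_higher_chow} and the references therein). It asserts precisely that the augmented Cousin complex $0\to\cal K^M_m\to C^\bullet_{\cal K^M_m}(X)$ is exact, i.e. that $C^\bullet_{\cal K^M_m}(X)$ is a resolution of $\cal K^M_m$. The terms of this complex are flasque: each summand $i_{x*}H^q_x(X,\cal K^M_m)$ is the pushforward along a closed immersion of a constant sheaf on the irreducible closure $\overline{\{x\}}$, hence flasque, and a direct sum of flasque sheaves on a noetherian scheme is again flasque. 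Therefore the Cousin complex computes the Zariski cohomology of $\cal K^M_m$, giving $H^q(C^\bullet_{\cal K^M_m}(X))\cong H^q(X,\cal K^M_m)$. Equivalently, the coniveau spectral sequence $E_1^{q,t}=\bigoplus_{x\in X^{(q)}}H^{q+t}_x(X,\cal K^M_m)\Rightarrow H^{q+t}(X,\cal K^M_m)$ degenerates, since the Gersten conjecture forces $E_1^{q,t}=0$ for $t\neq 0$, and the one surviving row is the Gersten complex.

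Combining the two steps, the cohomology in cohomological degree $q=d-p$ of the Gersten complex is on the one hand $A_p(X,n)$ by definition, and on the other hand $H^{d-p}(X,\cal K^M_{n+d})$, which yields the claimed isomorphism $A_p(X,n)\cong H^{d-p}(X,\cal K^M_{n+d})$.

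The only genuinely deep ingredient is the Gersten conjecture for Milnor $K$-theory: it is what makes the Cousin complex a resolution and what supplies the purity identification $H^q_x\cong K^M_{m-q}$. This is a theorem for smooth varieties over a field and is taken as input here. Once it is granted, the remaining work — the flasqueness of the Cousin terms and the translation between the homological indexing of $A_p(X,n)$ and the cohomological indexing of $H^{d-p}$ — is routine. The main point requiring care is precisely this index matching, since the weight shift by $d$ and the passage from dimension to codimension must be tracked consistently throughout.
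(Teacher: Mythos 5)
Your proof is correct and takes essentially the same route as the paper, whose entire proof is the one-line remark that the statement ``follows from the Gersten conjecture and purity for Milnor K-theory.'' You have simply made explicit the standard details the paper leaves implicit: the identification of Rost's complex with the Cousin complex via purity, the flasqueness of its terms, and the dimension-to-codimension index bookkeeping.
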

\begin{proof}
This follows from the Gersten conjecture and purity for Milnor K-theory.
\end{proof}


\section{Zero cycles in families of relative dimension $1$}\label{section_codim_one}



In this section we study the case of a family of genus zero curves. This will be an essential ingredient of the proof of our main theorem in the next section.
\begin{proposition}\label{proposition_curve_case}
Let $A$ be a henselian discrete valuation ring with local parameter $\pi$ and residue field $k$. Let $X$ be a regular scheme flat and projective over $A$. 
Let $d$ be the relative dimension of $X$ over $A$ and $X_n:=X\times_A{A/\pi^n}$. We also denote the special fiber $X_1$ by $X_k$.
\begin{enumerate}
\item If $X$ is smooth over $A$, $k=\bb C$ and $X_k$ is rationally connected, then the restriction map $$\CH^1(X)\cong \Pic(X)\to  \Pic(X_k)\cong \CH^1(X_k)$$ is an isomorphism.
\item If $d=1$, and $H^1(X_1,\mathcal{O}_{X_1})=0$, then the restriction map  $$\CH^1(X)\cong \Pic(X)\to  \Pic(X_k)\cong \CH^{LW}_0(X_k)$$ is an isomorphism. Here the last group on the right is the Levine-Weibel Chow group (see Definition \ref{definition_Chow_group_LW}).
\end{enumerate}
\end{proposition}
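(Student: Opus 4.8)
The plan is to prove both parts by comparing the Picard groups of $X$ and its special fiber through the infinitesimal thickenings $X_n = X \times_A A/\pi^n$, using the fact that $X_K$ (and hence, with suitable hypotheses, $X_k$) has no interesting line bundles coming from cohomology of the structure sheaf. The core mechanism is the exact sequence of sheaves on $X_{n+1}$
\begin{equation*}
0 \to \roi_{X_k} \to \roi_{X_{n+1}}^{\times} \to \roi_{X_n}^{\times} \to 1,
\end{equation*}
where the kernel sheaf is identified with $\roi_{X_k}$ (twisted by a line bundle, but as an abelian sheaf it is $\roi_{X_k}$) via $1 + \pi^n f \mapsto \bar f$, since $\pi^{n}\m_A/\pi^{n+1}\m_A$ is killed and $X$ is flat over $A$. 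Taking cohomology, the obstruction to lifting a line bundle from $X_n$ to $X_{n+1}$, and the ambiguity of such a lift, are controlled by $H^2(X_k,\roi_{X_k})$ and $H^1(X_k,\roi_{X_k})$ respectively. First I would therefore reduce both statements to the vanishing (or controlled behavior) of these coherent cohomology groups on the special fiber.

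For part (i), with $k=\bb C$ and $X$ smooth over $A$, the generic fiber $X_K$ is rationally connected, so $H^i(X_{\bar K},\roi) = 0$ for $i>0$; by flatness and semicontinuity of coherent cohomology in the proper flat family $X/A$, the same vanishing $H^i(X_k,\roi_{X_k})=0$ for $i>0$ holds on the special fiber. The displayed exact sequence then shows $\Pic(X_{n+1}) \xrightarrow{\cong} \Pic(X_n)$ for every $n$, so $\Pic(X_k) \cong \varprojlim_n \Pic(X_n)$; the Grothendieck existence theorem (formal GAGA for the proper $A$-scheme $X$) identifies this inverse limit with $\Pic(\hat X)$ and then with $\Pic(X)$ after passing from the formal completion along $X_k$ back to $X$ using that $A$ is henselian. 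The identifications $\CH^1 \cong \Pic$ on the left and right are standard for regular schemes, so this gives the isomorphism claimed.

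For part (ii), where $d=1$ so that $X$ is a regular relative curve but possibly non-smooth and $X_k$ possibly singular, the right-hand group is the Levine--Weibel Chow group $\CH^{LW}_0(X_k)$, and the key geometric input is Lemma \ref{lemma_degeneration_dim1}: the special fiber is a rational comb, a tree of $\bb P^1$'s meeting transversally, so $p_a(X_k)=0$ and each component has genus $0$. From $p_a(X_k)=0$ one gets $H^1(X_k,\roi_{X_k})=0$, while $H^2(X_k,\roi_{X_k})=0$ automatically since $X_k$ is one-dimensional; the thickening sequence again yields $\Pic(X_k)\cong\varprojlim_n\Pic(X_n)\cong\Pic(X)$ via formal GAGA and henselianity. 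The remaining point is the identification $\Pic(X_k)\cong\CH^{LW}_0(X_k)$ for the singular reduced curve $X_k$; this is the standard comparison of line bundles with the Levine--Weibel zero-cycle group in dimension one, where every local ring of $X_k$ is one-dimensional and one uses that Cartier divisors supported in the regular locus generate $\Pic$, matching the defining relations of $\CH^{LW}_0$.

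The main obstacle I anticipate is not the cohomology vanishing but the passage from the formal/pro-object $\varprojlim_n \Pic(X_n)$ to the honest Picard group $\Pic(X)$: this requires formal GAGA (Grothendieck existence) to algebraize formal line bundles, together with the henselian hypothesis on $A$ to descend from the formal completion of $X$ along $X_k$ to $X$ itself, and one must check that the resulting bijection is exactly the restriction map $\Pic(X)\to\Pic(X_k)$ rather than some twist. A secondary subtlety in part (ii) is verifying that the reduced, possibly non-reduced-at-intersections structure of the rational comb still gives $H^1(\roi)=0$ — this is where the \emph{tree} conclusion of Lemma \ref{lemma_degeneration_dim1} (Betti number $\beta(G)=0$) is essential, since a loop in the dual graph would contribute to $H^1$.
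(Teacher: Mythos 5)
Your overall strategy coincides with the paper's: the same thickening sequence $0 \to \roi_{X_1} \to \roi_{X_n}^{\times} \to \roi_{X_{n-1}}^{\times} \to 0$, the same reduction to the vanishing of $H^1(X_k,\roi_{X_k})$ and $H^2(X_k,\roi_{X_k})$, the identification $\Pic(X)\cong\varprojlim_n\Pic(X_n)$ (which the paper gets from \cite[Thm. 5.1.4]{EGA3}), Lemma \ref{lemma_degeneration_dim1} for part (ii), and Lemma \ref{lemma_BK_LW_dim1_pic} for $\Pic(X_k)\cong\CH^{LW}_0(X_k)$. Your part (ii) is correct and essentially identical to the paper's proof: you deduce $H^1(X_k,\roi_{X_k})=0$ directly from the constancy of the arithmetic genus, which is exactly the content of Lemma \ref{lemma_degeneration_dim1}, whereas the paper runs an induction over the components of the comb using Theorem \ref{cohom_proj_space}(iii); both are fine, and $H^2=0$ for dimension reasons in both.

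Part (i), however, contains a genuine error. You claim that the vanishing $H^i(X_{\bar K},\roi)=0$ for $i>0$ passes to the special fiber ``by flatness and semicontinuity of coherent cohomology.'' Semicontinuity gives the inequality in the wrong direction: for a proper flat family the function $y\mapsto h^i(X_y,\roi_{X_y})$ is \emph{upper} semicontinuous, so one only gets $h^i(X_k,\roi_{X_k})\geq h^i(X_K,\roi_{X_K})$, and vanishing at the generic point says nothing about the closed point --- coherent cohomology can jump at special fibers. What is actually needed (and what the paper uses, implicitly) is that in a smooth proper family in characteristic zero the special fiber is \emph{itself} rationally connected (deformation invariance of rational connectedness in characteristic $0$), after which $H^i(X_k,\roi_{X_k})=0$ for $i>0$ follows from Hodge theory applied to $X_k$ over $k=\bb C$. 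Alternatively, one can invoke the constancy of the Hodge numbers $h^{0,i}$ in smooth projective families over a characteristic-$0$ base, which combines the upper semicontinuity of each $h^{p,q}$ with the constancy of their sum (degeneration of Hodge--de Rham plus constancy of Betti numbers); bare semicontinuity alone is not enough. The repair is standard, but as written this step fails, and it is precisely where the hypotheses that $X/A$ is smooth and $k=\bb C$ must enter.
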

\begin{proof}
We begin with some preliminaries for $(i)$ and $(ii)$. First note that $\Pic(X)\cong \varprojlim \Pic(X_n)$ by \cite[Thm. 5.1.4]{EGA3}. Furthermore, $H^1(X_1,\mathcal{O}_{X_n}^\times)\cong \Pic(X_n)$. 
Let $A_n:=A/\pi^n$ and consider the short exact sequence
$$0\to (\pi^{n-1})\to A_n\to A_{n-1}\to 0.$$
 Since $\roi_{X_n}$ is flat over $A_n$, tensoring with $\otimes_{A_n}\roi_{X_n}$ gives the short exact sequence
$$0\to (\pi^{n-1})\otimes \roi_{X_n}\cong (\pi^{n-1})\roi_{X_n}\cong \roi_{X_1}\to \roi_{X_n}\to \roi_{X_{n-1}}\to 0$$
(the second isomorphism is induced by mapping an element $a\in \roi_{X_{1},x}$ to an element $\tilde{a}\pi^{n-1}$ with $\tilde a\in \roi_{X_n,x}$ a lift of $a$) and in turn the short exact sequence
$$0\to  \roi_{X_1}\to \roi_{X_n}^*\to \roi_{X_{n-1}}^*\to 0.$$
Taking cohomology, we get the exact sequence 
\begin{equation}\label{equation_exact_seq_thickenings}
H^1(X_1,\mathcal{O}_{X_1})\rightarrow H^1(X_1,\mathcal{O}_{X_n}^*)\rightarrow H^1(X_1,\mathcal{O}_{X_{n-1}}^*) \rightarrow H^2(X_1,\mathcal{O}_{X_1}). 
\end{equation}

In the situation of $(i)$, the first and last group are zero, since $X_k$ is rationally connected and $k=\bb C$. This implies the statement.

In the situation of $(ii)$, $H^2(X_1,\mathcal{O}_{X_1})=0$ for dimension reasons and $H^1(X_1,\mathcal{O}_{X_1})=0$ by assumption. This implies that $\Pic(X)\cong \Pic(X_k)$.
Finally, the last isomorphism follows from Lemma \ref{lemma_BK_LW_dim1_pic}.
\end{proof}


\begin{remark}[Relation to blow-ups of closed points]
Proposition \ref{proposition_curve_case}(ii) is closely related to the blow-up formula for the Picard group. Let the notation be as in Proposition \ref{proposition_curve_case}(ii) and $\pi:\tilde{X}\to X$ be the blow up in a closed point $x$. Then the exceptional divisor $E$ is isomorphic to $\bb P^1_{k(x)}$ and $\Pic(\tilde{X})\cong \Pic(X)\oplus \Pic(E)$ and $\Pic(E)\cong \bb Z$. The same holds for the Levine-Weibel Chow group of the special fiber: $\CH^{LW}_0(\tilde X_k)\cong \CH^{LW}_0(X_k)\oplus \bb Z$.
\end{remark}

The main example we have in mind for Proposition \ref{proposition_curve_case}(ii) is that in which the special fiber is a rational comb:

\begin{definition}
A genus zero \textit{comb} over $k$ with $n$ \textit{teeth} is a reduced projective curve of genus zero (i.e. a curve $C$ with $h^1(C,\roi_C)=0$) having $n+1$ irreducible components over $\bar{k}$ and only nodes as singularities. One component, defined over $k$, is called the \textit{handle} (say $C_0$). The other $n$ components, $C_1,\dots,C_n$, are disjoint from each other and intersect $C_0$ in $n$ distinct points. Every $C_i$ is smooth and rational. The curves $C_1,\dots,C_n$ may not be individually defined over $k$.
\end{definition}

\begin{remark}[Residue fields of intersection points of combs]
Being a comb implies that the handles $C_i$ are of the form $\bb P^1_{k(p_i)}$ for some $p_i\in C_0$. This is important when constructing combs by attaching rational curves over nonclosed fields.
Let $C^i=\bigcup_{j=1}^iC_i$.
For all $i$ there is a short exact sequence
$$0\to \roi_{C_{i}}(-p_i)\to \roi_{C^i}\to \roi_{C^{i-1}}\to 0$$
in which $p_i\in C_i$ is the intersection point of $C_i$ and $C^{i-1}$. Taking cohomology, we get the exact sequence
$$H^1(C_i,\roi_{C_{i}}(-p_i))\to H^1(C^i,\roi_{C^{i}})\to H^1(C^{i-1},\roi_{C_{i-1}}).$$
Finally, the group $H^1(C_i,\roi_{C_{i}}(-p_i))$ is zero by standard calculations of the cohomology of projective space since $p_i$ is of degree one on $C_i$.
\end{remark}

\begin{proposition}\label{proposition_higher_curve_case}
Let the notation be as in Proposition \ref{proposition_curve_case}(ii). Assume the Gersten conjecture for Milnor K-theory and for higher Chow groups holds for henselian discrete valuation rings. Let $j\geq 1$.
Let $Z_1,Z_2\subset X$ be two horizontal subschemes which intersect the special fiber $X_k$ transversally in the same component and such that $\Frac(A)\cong K(Z_1)\cong K(Z_2)$. Let $\alpha_{Z_1}\in K^M_j(K(Z_1))$ be supported on $Z_1$ and $\alpha_{Z_2}\in K^M_j(K(Z_2))$ be supported on $Z_2$. Let $p:X\to \Spec(A)$ be the structure map and $p_*$ the proper push-forward defined in Section \ref{section_higher_degree_map}. Assume that $p_*(\alpha_{Z_1}-\alpha_{Z_2})=0$ and $\partial_0(\alpha_{Z_1}-\alpha_{Z_2})=0$. Then $$\alpha_{Z_1}-\alpha_{Z_2}=0\in \CH^{1+j}(X_{},j)\cong H_1(K_{j+1}^M(K(X))\xrightarrow{\partial_1} \oplus_{x\in X^{(1)}}K_{j}^M(x)\xrightarrow{\partial_0} \oplus_{x\in X^{(2)}}K_{j-1}^M(x) ) .$$
\end{proposition}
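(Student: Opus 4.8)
The plan is to prove the vanishing by realising $\alpha_{Z_1}-\alpha_{Z_2}$ as a boundary in the complex computing $A_1(X,j-1)\cong\CH^{1+j}(X,j)$ from Proposition \ref{identificationwithZero-cycles with coefficients in Milnor K-theory}(ii); that is, to produce $\beta\in K^M_{j+1}(K(X))$ with $\partial_1(\beta)=\alpha_{Z_1}-\alpha_{Z_2}$. First I would fix the geometry. By Lemma \ref{lemma_degeneration_dim1} the special fiber $X_k$ is a tree of copies of $\bb P^1_k$, and each $Z_i$ is the spectrum of a henselian discrete valuation ring $B_i$ finite over $A$, meeting the common component $C\cong\bb P^1_k$ transversally in a single smooth closed point $w_i$ with residue field $F$. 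I then translate the hypotheses into invariants of the $\alpha_{Z_i}$: the equality $\partial_0(\alpha_{Z_1}-\alpha_{Z_2})=0$ says that the tame symbols $\partial_{w_i}(\alpha_{Z_i})\in K^M_{j-1}(F)$ agree when $w_1=w_2$ and vanish individually when $w_1\neq w_2$, while $p_*(\alpha_{Z_1}-\alpha_{Z_2})=0$ says $N_{K(Z_1)/K}(\alpha_{Z_1})=N_{K(Z_2)/K}(\alpha_{Z_2})$ in $\widehat K^M_j(A)$.

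The core of the construction exploits the rationality of $C$. The points $w_1,w_2$ have the same degree $[F:k]$ on $C\cong\bb P^1_k$, so the classes of $w_1$ and $w_2$ coincide in $\CH^{LW}_0(X_k)\cong\Pic(X_k)$. Since the restriction $\Pic(X)\isoto\Pic(X_k)$ is an isomorphism by Proposition \ref{proposition_curve_case}(ii), it follows that $\roi_X(Z_1)\cong\roi_X(Z_2)$, hence there exists $g\in K(X)^\times$ with $\op{div}_X(g)=Z_1-Z_2$. In particular $g$ is a unit along every component of $X_k$, its restriction to $C$ has divisor $w_1-w_2$, and its restriction to every other component is a nonzero constant.

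Next I would lift each entry of $\alpha_{Z_1}$ to a unit in $\roi_{X,z_1}^\times$, obtaining $\tilde\alpha\in K^M_j(K(X))$ that reduces to $\alpha_{Z_1}$ along $Z_1$, and set $\beta=\{g\}\cdot\tilde\alpha$. Computing $\partial_1(\beta)$ with the standard tame-symbol rules, the simple zero and pole of $g$ along $Z_1$ and $Z_2$ produce the term $\alpha_{Z_1}$ at $z_1$ and a term $-\alpha_{Z_2}'$ at $z_2$, where $\alpha_{Z_2}'=\tilde\alpha|_{Z_2}\in K^M_j(K(Z_2))$; along each component $C'$ of $X_k$, where $g$ is a unit, the residue equals $-\{g|_{C'}\}\cdot\partial_{\eta_{C'}}(\tilde\alpha)\in K^M_j(k(C'))$. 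Subtracting this boundary reduces the assertion, modulo $\op{im}(\partial_1)$, to the vanishing of a cycle supported on the single curve $Z_2$ (the discrepancy $\alpha_{Z_2}'-\alpha_{Z_2}$) together with a vertical cycle $V$ supported on the tree of rational curves.

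The remaining work is to absorb $V$ and the discrepancy on $Z_2$. The vertical contributions lie on components isomorphic to $\bb P^1_k$, where $A_0(\bb P^1_k,j)\cong K^M_j(k)$ by Proposition \ref{proposition_computationP1k}; combined with the moving Lemma \ref{lemma_moving} and the tree structure of $X_k$, one propagates these residues across the nodes and rewrites $V$ as a further boundary. The leftover class on $Z_2$ is controlled by its tame symbol at $w_2$ and its norm to $\widehat K^M_j(A)$, both of which are pinned down by the hypotheses $\partial_0=0$ and $p_*=0$; using that $B_2$ is a henselian discrete valuation ring with residue field $F$, these two invariants determine the class, which then vanishes. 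The main obstacle I anticipate is precisely this bookkeeping: reconciling the two a priori unrelated fields $K(Z_1)$ and $K(Z_2)$ through a single $\beta$ and controlling all spurious residues that the chosen lift $\tilde\alpha$ introduces along the vertical tree and along auxiliary horizontal curves. Making this rigorous will require a careful, likely inductive, choice of lifts and repeated appeals to the genus-$0$ structure of every component of $X_k$, with the hypotheses $p_*=0$ and $\partial_0=0$ entering exactly to guarantee that no residual class survives at the end.
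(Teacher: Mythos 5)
Your opening moves match the paper's: the translation of the hypotheses and the use of Proposition \ref{proposition_curve_case}(ii) to produce $g\in K(X)^\times$ with $\mathrm{div}_X(g)=Z_1-Z_2$ are exactly the paper's first step. The gap is in the bounding element. You set $\beta=\{g\}\cdot\tilde\alpha$ for an \emph{arbitrary} lift $\tilde\alpha\in K^M_j(K(X))$ of $\alpha_{Z_1}$, which (as you yourself note) creates spurious residues along all vertical components and along auxiliary horizontal curves, plus the discrepancy $\tilde\alpha|_{Z_2}-\alpha_{Z_2}$ at $Z_2$; the plan you offer for removing these is not a proof. Concretely: (1) classes supported on the special fiber are not automatically boundaries (already for $j=0$ a single component of a reducible special fiber has nonzero class in $\Pic(X)$), and the vertical pieces you must cancel are not even cycles individually, so ``propagating residues across the nodes'' requires an argument you do not give; (2) the final step is circular: an element of $K^M_j(K(Z_2))$ is very far from being determined by its tame symbol at $w_2$ and its norm to $\hat K^M_j(A)$ (the kernel of this pair of maps is enormous), and the weaker assertion that these two invariants determine its \emph{class} in $\CH^{1+j}(X,j)$ is precisely the $Z_1=Z_2$, $\alpha_{Z_1}=0$ instance of the very proposition you are proving.

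The idea you are missing is that the hypothesis $p_*(\alpha_{Z_1}-\alpha_{Z_2})=0$ must be used to \emph{construct} the symbol, not to clean up afterwards. Transversality forces $\roi(Z_i)$ to be a DVR in which $\pi$ remains a uniformizer, so having equal residue fields identifies $K(Z_1)$ and $K(Z_2)$ with the fraction field of a single unramified extension of $A$ (in the application in Section \ref{section_smooth_case} both $Z_i$ are sections, so with $K$ itself); under this identification, $p_*(\alpha_{Z_1}-\alpha_{Z_2})=0$ says that $\alpha_{Z_1}$ and $\alpha_{Z_2}$ are \emph{the same} symbol $\alpha$ defined over the base --- this is the paper's ``element $\alpha\in K^M_j(\Frac(A))$ given by $\alpha_{Z_1}$ and $\alpha_{Z_2}$''. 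Taking $\beta=\{g\}\cdot\alpha$ with $\alpha$ from the base kills all bookkeeping at once: the entries of $\alpha$ are units at every horizontal prime of $X$, so the only residues of $\beta$ are $\alpha_{Z_1}$ at $z_1$, $-\alpha_{Z_2}$ at $z_2$, and terms $-\{\bar g|_C\}\cdot\partial_{\eta_C}(\alpha)$ on the components $C$ of $X_k$; the latter vanish because $\partial_0(\alpha_{Z_1}-\alpha_{Z_2})=0$ forces $\alpha$ to be unramified over $A$, hence representable by symbols of units of $A$ (when $w_1\neq w_2$), respectively because $g$ may be normalized so that $\bar g\equiv 1$ on $X_k$ (when $w_1=w_2$). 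Hence $\partial_1(\beta)=\alpha_{Z_1}-\alpha_{Z_2}$ on the nose, with nothing left to absorb.
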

\begin{proof}
By Proposition \ref{proposition_curve_case} there is an $f\in K(X)$ such that $\mathrm{div}(f)=Z_1-Z_2$. We may further assume that $f$ is congruent to $1$ in the generic points of the special fiber of $X$ (see the proof of Theorem \ref{theorem_higher_zero}). Let $\alpha\in K^M_j(\Frac(A))$ be the element given by $\alpha_{Z_1}$ and $\alpha_{Z_2}$ and $\tilde{\alpha}$ its image in $K^M_j(K(X))$. Then $\partial_1(\{f,\tilde\alpha\})=\alpha_{Z_1}-\alpha_{Z_2}$.
\end{proof}

\begin{remark}
We expect that the restriction map
$$res: \CH^{1+j}(X_{},j) \cong H^1(X,\cal K^M_{j+1})\to  H^1(X_k,\cal K^M_{j+1})\cong \CH^{1+j}(X_{k},j) $$
induces an isomorphism 
$$res: \CH^{1+j}(X_{},j)_0 \cong  \CH^{1+j}(X_{k},j)_0. $$
Indeed, if $X$ is obtained by a finite number of successive blow ups of closed points of $\bb P^1_A$, then this should follow from the Propositions \ref{proposition_computationP1A}, \ref{proposition_MayerVietoris} and \ref{proposition_descent_blow_ups}.

There is no easy way to formulate a generalisation of this expectation to higher (relative) dimension, unless we make the assumptions as in our main theorem \ref{main_theorem}(ii). 
\end{remark}


\section{The smooth case}\label{section_smooth_case}
In this section let $A$ be a henselian discrete valuation ring with function field $K$ and residue field $k$. Let $S=\Spec(A)$ and $X$ be a regular scheme flat and projective over $S$ with generic fiber $X_K$ and special fiber $X_k$. Let $d$ be the relative dimension of $X$ over $S$. The goal of this section is to prove the following theorem:
\begin{theorem}\label{main_theorem_in_text}
Assume that $X$ is smooth over $S$ and that $X_k$ is separably rationally connected. 
\begin{enumerate}
\item The restriction map $\CH^d(X)\to \CH^d(X_k)$ is an isomorphism.
\item Assume $X_k$ has a rational point that the degree map $deg\colon\CH^d(X_{k'})\to \bb Z$ is an isomorphism for every finite field extension $k'/k$ and assume the Gersten conjecture for Milnor K-theory and for higher Chow groups for henselian discrete valuation rings. Then the vertical maps in the commutative diagram 
\begin{equation}\label{diagram_main_thm}
\xymatrix{
\CH^{d+j}(X_{},j) \ar[d]_\cong  \ar[r]^{}  &  \CH^{d+j}(X_{k},j) \ar[d]^\cong  
\\
\hat K^M_j(A)  \ar[r]^{}   & K^M_j(k) 
}
\end{equation}
are isomorphisms for all $j\geq 0$. 
\end{enumerate}
\end{theorem}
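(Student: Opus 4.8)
The plan is to treat the two parts in tandem, reducing every assertion to the relative curve statements of Section \ref{section_codim_one} by means of the deformation theory of (separably) rational curves. For (i), I would first identify the restriction map with Rost's Gysin pullback $i^*\colon A_1(X,-1)\to A_0(X_k,0)$ along the regular divisor $i\colon X_k\into X$, using the identifications $\CH^d(X)\cong A_1(X,-1)$ and $\CH^d(X_k)\cong A_0(X_k,0)$ of Proposition \ref{identificationwithZero-cycles with coefficients in Milnor K-theory}. Surjectivity then amounts to lifting a closed point $x\in X_k$ to a horizontal $1$-cycle on $X$ whose intersection with $X_k$ equals $[x]$ modulo rational equivalence. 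Since $X$ is smooth over $A$ and $X_k$ is separably rationally connected, I would produce such a lift by passing a very free rational curve through $x$ in $X_k$, deforming it over $A$ into a horizontal family, and absorbing any spurious intersection points with $X_k$ by joining them to a fixed base point along further free rational curves. Injectivity is where the reduction truly bites: given a horizontal cycle $Z$ with $i^*Z=0$ in $\CH_0(X_k)$, I would use deformation theory to place $Z$, together with the rational equivalence witnessing $i^*Z=0$, inside a relative semistable curve $\cal C\subset X$ whose special fiber is a tree of $\bb P^1$'s by Lemma \ref{lemma_degeneration_dim1}, and then invoke Proposition \ref{proposition_curve_case}(ii) to conclude $Z=0$ in $\CH^d(X)$.

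For (ii), the vertical maps are the proper pushforwards $p_*$, so I would begin by noting that a degree-one zero-cycle on $X_k$ (which exists because $\CH^d(X_k)\cong\bb Z$) lifts through the henselian smooth morphism $X\to S$ to a quasi-section; this splits both copies of $p_*$ and yields surjectivity throughout. The claim is thereby reduced to the vanishing of the degree-zero subgroups $\CH^{d+j}(X,j)_0$ and $\CH^{d+j}(X_k,j)_0$. The case $j=0$ is immediate: part (i) gives $\CH^d(X)\cong\CH^d(X_k)\cong\bb Z$, and the degree map is then forced to be an isomorphism.

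For the left vertical map with $j>0$ I would rerun the curve reduction of (i), now for higher cycles: represent a class of $\CH^{d+j}(X,j)_0\cong A_1(X,j-1)$ on horizontal curves, deform into a relative curve, and match the two horizontal strands using $\CH^d(X_k)\cong\bb Z$ (which makes any two special-fiber components of equal degree rationally equivalent) so that the transversality, same-component, and same-residue-field hypotheses of Proposition \ref{proposition_higher_curve_case} are met; that proposition then forces the vanishing, whence the left vertical map is an isomorphism. The right vertical map follows because the same lifting argument exhibits $\CH^{d+j}(X_k,j)_0$ as a quotient of $\CH^{d+j}(X,j)_0=0$; alternatively one combines the finite-exponent bound of Theorem \ref{theorem_finite_exponent} with the hypothesis $\CH^d(X_k)\cong\bb Z$ to eliminate the torsion directly.

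The main obstacle, in both parts, is the deformation-theoretic step: arranging the free rational curves through prescribed points of $X_k$ to spread out over the henselian base into a relative (semistable) curve that simultaneously supports the given cycles, the rational equivalences, and the constraints demanded by Propositions \ref{proposition_curve_case} and \ref{proposition_higher_curve_case}. The second delicate point is the passage from the finite-exponent statement to exact vanishing of the higher degree-zero groups, that is, leveraging $\CH^d(X_k)\cong\bb Z$ to match Milnor $K$-theory coefficients across components.
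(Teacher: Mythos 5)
Your overall skeleton---reduce everything to the relative-curve statements of Section \ref{section_codim_one} via deformation theory of rational curves, and reduce part (ii) to the vanishing of the degree-zero subgroups using the split pushforwards---matches the paper, and your treatment of (ii) is close to the paper's (which uses $\CH^d(X_k)\cong\bb Z$ and the Step-3 comb argument to show $\CH^{d+j}(X,j)$ is generated by symbols on a single transversal curve, then applies Proposition \ref{proposition_higher_curve_case}). The genuine gap is in your injectivity argument for (i). You propose, given a horizontal cycle $Z$ with $i^*Z=0$ in $\CH_0(X_k)$, to place $Z$ \emph{together with the rational equivalence witnessing} $i^*Z=0$ inside a relative semistable curve $\cal C\subset X$ and then quote Proposition \ref{proposition_curve_case}(ii). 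This step fails: the equivalence is witnessed by rational functions on arbitrary curves $C_j\subset X_k$, which need not be rational and need not form a tree, whereas Lemma \ref{lemma_degeneration_dim1} says the special fiber of a regular relative curve with genus-zero generic fiber \emph{must} be a tree of rational curves. That lemma is thus a constraint ruling out your configuration, not a construction tool (you cite it backwards), and none of the deformation machinery available (Koll\'ar's comb theorem \cite[Thm. 15]{Kollar2004}, the Araujo--Koll\'ar moduli space \cite{AraujoKollar2003} plus Hensel's lemma) can produce a relative curve whose special fiber contains prescribed non-rational curves.

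The paper avoids ever having to realize a rational equivalence inside a relative curve, by splitting injectivity into two steps that you have merged. First, Proposition \ref{theorem_lifting_well_defined} treats only the \emph{cycle-level} kernel: a good-position cycle whose restriction vanishes \emph{as a cycle} on $X_k$ is zero in $\CH^d(X)$; here the special-fiber data are finitely many points, which is exactly the setting where the comb deformation works. Second, rational equivalences on $X_k$ are handled not by curves in $X$ but by relative \emph{surfaces}: given $\alpha_0\in k(x)^\times$ on a curve $\overline{\{x\}}\subset X_k$, one finds (as in \cite[Lem. 7.2]{SS10} and \cite[Lem. 2.1]{Lu16}) a relative surface $Z\subset X$ containing $\overline{\{x\}}$ with multiplicity one in $Z_0$ and a lift $\alpha$ of $\alpha_0$ specialising to $1$ on all other components of $Z_0$; then $\mathrm{div}_Z(\alpha)$ is a good-position cycle restricting to $\partial(\alpha_0)$, so $\gamma(\partial(\alpha_0))=[\mathrm{div}_Z(\alpha)]=0$, the lifting map $\gamma$ descends to $\tilde\gamma:\CH^d(X_k)\to\CH^d(X)$, and injectivity follows formally from $\tilde\gamma\circ\tilde\rho=\mathrm{id}$. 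Without this surface/unit-lifting step, or a substitute for it, your proof of (i) does not close. Two minor points: surjectivity in (i) needs no rational curves at all (Hensel's lemma lifts a closed point through the smooth map $X\to S$ to a transversal quasi-section); and your ``alternative'' for the right vertical map in (ii)---combining Theorem \ref{theorem_finite_exponent} with $\CH^d(X_k)\cong\bb Z$---does not work, since finite exponent of $\CH^{d+j}(X_k,j)_0$ for $j>0$ gives no vanishing.
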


We begin by outlining the proof of Theorem \ref{main_theorem_in_text}.
Let $(X)^{d,g}$ denote the set of codimension $d$ points $x$ such that $\overline{\{x\}}$ is flat over $S$, i.e. the cycles in good position. Let $(X)^{d,t}\subset (X)^{d,g}$ denote the subset of points $x$ for which in addition the intersection of $\overline{\{x\}}$ and $X_k$ is transversal. Let $Z^{d,g}(X)=\bigoplus_{x\in (X)^{d,g}}\bb Z$ and $Z^d(X_{k})=\bigoplus_{x\in (X_k)^{(d)}_{}}\bb Z$. Consider the diagram
$$\xymatrix{
\CH^d(X) \ar@/^1pc/[r]^{\tilde{\rho}} &  \CH^{d}(X_k)  \ar[l]_{\tilde{\gamma}}  \\
Z^{d,g}(X) \ar[u]  \ar[r]^{\rho}  & Z^d(X_{k}). \ar[u]  \ar[lu]_\gamma   \
}$$
The idea of the proof of $(i)$ is the following: the two vertical maps exist and are surjective.  The maps $\rho$ and $\tilde{\rho}$, restricting a horizontal one-cycle to the special fiber, also exist and make the outer diagram commute. We show that the diagonal map $\gamma$, induced by lifting closed points to horizontal curves which intersect the special fiber transversally, makes the lower triangle of the above diagram commute. Then we show that $\gamma$ factorises through rational equivalence which gives the map $\tilde{\gamma}$. The map $\tilde{\gamma}$ is surjective and inverse to $\tilde{\rho}$ and therefore an isomorphism. For $j\geq 1$ the situation is slightly more subtle since already the bottom map in Diagram (\ref{diagram_main_thm}) is surjective but not injective. This approach to studying relative zero-cycles was used by Koll\'ar in \cite[Sec. 18]{Kollar2004} assuming, like we do, that $X_k$ is separably rationally connected, and by Kerz, Esnault and Wittenberg in \cite[Sec. 4 and 5]{KerzEsnaultWittenberg2016} working with finite coefficients prime to the residue characteristic but without the assumption of $X_k$ being separably rationally connected.

\begin{proposition}\label{theorem_lifting_well_defined}
Let the notation be as in Theorem \ref{main_theorem_in_text}. Then the diagram
$$\xymatrix{
\CH^d(X)  &   \\
Z^{d,g}(X) \ar[u]  \ar[r]^{\rho}  & Z^d(X_{k})  \ar[lu]_\gamma   \
}$$
commutes.
For each $j\geq 1$ and $x\in (X_k)^{(d)}$ the image of the map 
$$\ker[\bigoplus_{y\in \Spec(\roi_{X,x})^{(d),g}}K^M_j(y)\xto{\partial} K^M_{j-1}(x)]\to \CH^{d+j}(X_{},j) $$ is equal to the image of $\ker[K^M_j(y)\xto{\partial} K^M_{j-1}(x)]$ for some $y\in \Spec(\roi_{X,x})^{(d),t}$. 
\end{proposition}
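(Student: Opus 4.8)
The inclusion ``$\supseteq$'' is immediate, since a transversal curve lies in good position; the content is therefore that every class in $\CH^{d+j}(X,j)$ arising from a good (horizontal) curve through $x$ already comes from one fixed transversal curve. I would organize the proof as a reduction of this relative-dimension-$d$ statement at $x$ to the relative-dimension-$1$ results of Section~\ref{section_codim_one}. As a preliminary step, note that since $X$ is smooth over $A$ at the closed point $x$ and $A$ is henselian, Hensel's lemma produces a horizontal curve $C_0\subset X$ through $x$ meeting $X_k$ transversally, with $\roi_{C_0,x}$ a discrete valuation ring whose uniformizer is $\pi$ and whose residue field is $\kappa(x)$; this furnishes the reference point $y_0\in\Spec(\roi_{X,x})^{(d),t}$.

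\emph{Embedding a good curve into an arithmetic surface with rational special fiber.} Let $C=\overline{\{y\}}$ be a good curve through $x$, and let $\alpha\in K^M_j(K(y))$ be a cycle (i.e.\ $\partial_0\alpha=0$) representing a class in $\CH^{d+j}(X,j)\cong A_1(X,j-1)$. The generic point $y$ is a closed point of $X_K$. Using the separable rational connectedness of the fibers together with the deformation theory of free rational curves, I would produce a genus-$0$ integral curve $Y_K\subset X_K$ passing through $y$. Taking its closure in $X$ and passing to a regular model $Y\to\overline{Y_K}$ (resolution of arithmetic surfaces) yields a regular scheme $Y$, flat and projective over $A$, with $Y_K$ of genus $0$, equipped with a proper map $i\colon Y\to X$ through which $C$ factors and whose special fiber contains $x$. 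By Lemma~\ref{lemma_degeneration_dim1} the fiber $Y_k$ is a tree of rational curves, so $Y$ is in the $d=1$ situation of Propositions~\ref{proposition_curve_case}(ii) and~\ref{proposition_higher_curve_case}. Because the tame symbols at the closed points of $C$ are intrinsic to $C$, the hypothesis $\partial_0^X\alpha=0$ gives $\partial_0^Y\alpha=0$, so $\alpha$ also represents a class on $Y$.

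\emph{Moving the class inside $Y$ and pushing forward.} The proper map induces $i_*\colon\CH^{1+j}(Y,j)=A_1(Y,j-1)\to A_1(X,j-1)=\CH^{d+j}(X,j)$, and $[\alpha]_C$ is the image under $i_*$ of the corresponding class carried by $C\subset Y$. Working inside $Y$, I would use Proposition~\ref{proposition_curve_case}(ii) together with Proposition~\ref{proposition_higher_curve_case} to move $\alpha$ to a class supported on a curve $C'\subset Y$ meeting $Y_k$ transversally at a point over $x$, without changing the class in $\CH^{1+j}(Y,j)$; since transversality to $Y_k$ in $Y$ means precisely that $\pi$ is a uniformizer of $\roi_{C',x}$, the image of $C'$ is transversal to $X_k$ in $X$. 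Applying $i_*$ then shows $[\alpha]_C$ lies in the image of a transversal curve. To consolidate all transversal curves to the single $C_0$, I would embed $C'$ and $C_0$ in a common arithmetic surface with tree-of-rationals special fiber and invoke Proposition~\ref{proposition_higher_curve_case} with $Z_1=C'$, $Z_2=C_0$: both have the same residue field $\kappa(x)$, and after matching the degree $p_*$ and boundary $\partial_0$ (using the split exact sequence for the tame symbol of the henselian DVRs $\roi_{C',x},\roi_{C_0,x}$), the proposition forces the two classes to agree. This yields $G_g\subseteq G_t(y_0)$ and hence the claimed equality.

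\emph{Main obstacle.} The crux is the construction in the second paragraph: producing, through the prescribed closed point $y\in X_K$, a genus-$0$ curve whose regular model has special fiber a tree of rational curves and still meets $X_k$ at $x$. This is exactly where separable rational connectedness is indispensable, since in positive characteristic one needs (very) free rational curves through a given, possibly non-general, point; the comb-smoothing deformation theory of Koll\'ar--Miyaoka--Mori must be invoked to pass from a reducible comb through $y$ to an irreducible genus-$0$ deformation. Ensuring after resolution that the transferred curve remains transversal near $x$, and matching the $p_*$ and $\partial_0$ data required by Proposition~\ref{proposition_higher_curve_case}, is the remaining technical burden.
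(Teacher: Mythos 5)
Your high-level plan (reduce to relative dimension one and then invoke Propositions \ref{proposition_curve_case} and \ref{proposition_higher_curve_case}) matches the paper's strategy in spirit, but two of your steps have genuine gaps. First, you never address the intersection multiplicity of the good curve with the special fiber. If $Z=\overline{\{y\}}$ meets $X_k$ at $x$ with multiplicity $n>1$, your ``moving the class inside $Y$'' step fails as stated: under $\Pic(Y)\cong\Pic(Y_k)$ (Proposition \ref{proposition_curve_case}(ii)) the class of $Z$ restricts to a divisor of total degree $n$ on the tree $Y_k$, while a transversal curve restricts to a reduced point of degree one, and on a tree of rational curves these multidegrees distinguish the classes; so no transversal $C'$ can carry the same class. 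Proposition \ref{proposition_higher_curve_case} cannot repair this, because it presupposes that \emph{both} curves are already transversal. The paper removes the multiplicity before any moving takes place: it base changes along $\Spec(\roi_Z)\to S$, where $\roi_Z$ is the ring of integers in $K(Z)$, so that $Z_{\roi_Z}$ becomes a section and hence transversal, and it recovers the statement over $A$ by pushing forward (for $j\geq 1$ using the projection formula for the norm on Milnor K-theory). Nothing in your proposal plays this role.

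Second, the step you defer to the end --- placing two transversal curves through $x$ into one common regular genus-zero fibration --- is the actual crux, and your route to auxiliary surfaces cannot deliver it. Taking the closure of a rational curve through $y\in X_K$ and resolving gives no control over the special fiber of the resulting surface: you cannot force it to contain the fixed reference curve $C_0$; you cannot guarantee that the relevant points lie on reduced components of the special fiber, so transversal curves through them may not exist at all; and Lemma \ref{lemma_degeneration_dim1} needs a genus-zero generic fiber over $K$, which the image of a rational curve through a closed point with $K(y)\neq K$ need not be. The paper instead builds the surface from the special-fiber side so that it contains both horizontal curves \emph{by construction}: it passes to $X\times\bb P^1_{\roi_Z}$ so that the two curves meet the special fiber at the distinct points $(x,(1:0))$ and $(x,(0:1))$, forms a comb $C^*$ with handle $\{x\}\times\bb P^1_k$ and teeth coming from rational curves in $X_k$ --- this is where separable rational connectedness of the \emph{special} fiber (the actual hypothesis; you invoke SRC of $X_K$ instead) enters, via Koll\'ar's comb theorem --- arranges $H^1(C^*,\roi_{C^*}(-x_1-x_2)\otimes N_{C^*})=0$ so that the Araujo--Koll\'ar space of stable genus-zero curves containing the two horizontal curves is smooth over $S$, and then applies Hensel's lemma to produce a relative curve $C_S$ with special fiber $C^*$, to which Propositions \ref{proposition_curve_case} and \ref{proposition_higher_curve_case} apply after resolution. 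Your ``main obstacle'' paragraph invokes comb smoothing only to produce a curve through $y$ in the generic fiber, which is not where it is needed.
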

\begin{proof}
Consider an element 
$$T:=\sum_{i=1}^s r_i[Z_i]\in \rm ker (\rho).$$
We need to show that $T\equiv 0\in \CH^{d}(X)$. In order to do so, we make several reduction steps. 

\textit{Step 1.} We may assume that all the $Z_i$ intersect $X_k$ in the same point $x$ and that $s=2$. Indeed,  let $Z'$ be a lift of $x$ intersecting $X_k$ transversally. Then we may assume that $T$ is of the form
$$[Z]-n[Z'],$$
where $Z$ and $Z'$ are integral and $n$ is the intersection multiplicity of $Z$ with $X_k$, since if $n_i$ is the intersection multiplicity of $Z_i$ with $X_1$, then $T=\sum_{i=1}^s r_i[Z_i]-n_i r_i[Z']$.

\textit{Step 2.} In this step we reduce to the case that $n=1$, i.e. $Z$ also intersects $X_k$ transversally.
Let $\roi_{\tilde Z}$ be the ring of integers in $K(Z)$. Note that $\roi_{\tilde Z}$  is a henselian discrete valuation ring. We base change $X$ along the map $\Spec(\roi_{\tilde Z})\to S$ and denote the base change of $X$ by $X_{\roi_{\tilde Z}}$. Let $Z'_{\roi_{\tilde Z}}\subset X_{\roi_{\tilde Z}}$ be the base change of $Z'$ and $Z_{\roi_{\tilde Z}}\subset X_{\roi_{\tilde Z}}$ the integral closed subscheme corresponding to $\Spec(\roi_{\tilde Z})$. $Z'_{\roi_{\tilde Z}}$ and $Z'_{\roi_{\tilde Z}}$ restrict to the same point in $X_{\roi_{\tilde Z}}$, i.e. their difference is in the kernel of $\rho$. Then, if we have that
$$[Z_{\roi_{\tilde Z}}]-[Z'_{\roi_{\tilde Z}}]\equiv 0\in \CH_1(X_{\roi_{\tilde Z}}),$$
then also $p_*([Z_{\roi_{\tilde Z}}]-[Z'_{\roi_{\tilde Z}}])=[Z]-n[Z']=0 \in \CH_1(X)$, where $p: X_{\roi_{\tilde Z}}\to X$ is the projection.\footnote{The reader may wonder how this step of the argument relates to "Bloch's trick'' in the appendix of \cite{EWB16}. In fact, it is very similar. Bloch takes the normalisation $\tilde{Z}$ of $Z$ and embeds $\tilde{Z}$ and $Z'$ into $\bb P^m_X$ in order to reduce to the case that $Z$ is regular. We could have done the same thing and then base changed along $\tilde{Z}$ in order to further reduce to the case of $n=1$.}

\textit{Step 3.} 
Let $f:\bb P^1_k\to X_k$ be the rational curve with image the point $x$. 
Taking the diagonal map
$$F:\bb P^1_k\to X\times \bb P^1_k,\quad p\mapsto (f(p),p),$$
we may now show that $\tilde Z_{\roi_{Z}}=(Z_{\roi_{Z}},(1:0))$ and $\tilde Z'_{\roi_{Z}}=(Z'_{\roi_{Z}},(0:1))$ are equal in $\CH_1(X\times \bb P^1_{\roi_Z})$. 
Let $C$ denote the curve which is the image of $F$.
Let $x_1= \tilde Z_{\roi_{Z}}\cap X\times \bb P^1_{k}$ and $x_2= \tilde Z'_{\roi_{Z}}\cap X\times \bb P^1_{k}$. By \cite[Thm. 15]{Kollar2004} (up to passing to  a product with $\bb P^m$ for some $m\in \bb N$), there is a genus zero comb over $C^*\subset X\times \bb P^1_k$ over $k$, 
with handle $C$, which is smooth at $z_1$ and $z_2$ and such that $H^1(C^*,\roi_{C^*}(-x_1-x_2)\otimes N_{C^*})=0$. 
By \cite[Thm 51.5]{AraujoKollar2003}, there is a smooth $S$-scheme $U$ and a family of stable genus zero curves containing $\tilde Z_{\roi_{Z}}\cup \tilde Z'_{\roi_{Z}}$ and a point $u\in U(k)$ corresponding to $C^*$ (in the notation of \textit{loc. cit.}, $P=\tilde Z_{\roi_{Z}}\cup \tilde Z'_{\roi_{Z}}$). Indeed, since $X_k$ is smooth and $C^*$ a local complete intersection, there is an exact sequence of sheaves on $C^*$
$$0\to  T_{C^*}(-x_1-x_2) \to T_{X_k}\mid_{C^*}(-x_1-x_2) \to   N_{C^*}(-x_1-x_2) \to 0.$$
This gives an exact sequence 
$$\to H^1(C^*,T_{C^*}(-x_1-x_2)) \to H^1(T_{X_k}\mid_{C^*}(-x_1-x_2)) \to   H^1(N_{C^*}(-x_1-x_2)) \to 0.$$
Using the Mayer-Vietoris sequence for closed covers, one can show that $H^1(C^*,T_{C^*}(-x_1-x_2))=0$ and therefore $H^1(T_{X_k}\mid_{C^*}(-x_1-x_2))=0$.\footnote{Instead of working with the moduli of stable curves of genus zero, we could also use the Hilbert scheme as in \cite{Kollar2004}. In order to achieve that the resulting relative curve contains $\tilde Z_{\roi_{Z}}\cup \tilde Z'_{\roi_{Z}}$, one has to blow up the scheme in these two sections first though.}
Let $s$ be the closed point of $S$. By Hensel's lemma there is a section $\sigma:S\to U$ such that $\sigma(s)=u$. $\sigma(S)$ corresponds to a relative curve $C_S$, which contains $P$, with special fiber $C^*$. The proposition now follows from Proposition \ref{proposition_curve_case} applied to the irreductible component $C_S'$ of $C_S$ which contains $P$. Indeed, 
since $C_S'$ is smooth around $x_1$ and $x_2$, $[\tilde Z_{\roi_{Z}}]-[\tilde Z'_{\roi_{Z}}]$ restricts to zero in $\Pic((C_S')_k)$ and therefore, by Proposition \ref{proposition_curve_case}, also $[\tilde Z_{\roi_{Z}}]-[\tilde Z'_{\roi_{Z}}]=0\in \CH_1({C}_S')$. 

The case $j\geq 1$ is proved similarly, the important part of the case $d=1$ being treated in Proposition \ref{proposition_higher_curve_case}. For the reduction to the case $d=1$ we proceed as above with the following modifications. 
In \textit{Step 1} we start with an additional $\alpha\in K^M_j(k(Z))$ supported on $Z$. We embed $\tilde Z_{\roi_{Z}}$ and $\tilde Z'_{\roi_{Z}}$ as $\tilde Z_{\roi_{Z}}=(Z_{\roi_{Z}},(1:0),(1:0))$ and $\tilde Z'_{\roi_{Z}}=(Z'_{\roi_{Z}},(1:0),(1:0))$ into $X\times \bb P^2_{\roi_Z}$. In particular they intersect the special fiber in the same point, say $p$. As in \textit{Step 3} of the proof of Theorem \ref{theorem_lifting_well_defined} we can find relative curves of genus zero $C_S$ and $C_S'$ containing $\tilde Z_{\roi_{Z}}$ and $\tilde Z'_{\roi_{Z}}$ respectively with the property that $C_S$ and $C_S'$ intersect transversally in $p$. Let $\tilde Z_{\roi_{Z}}''$ the intersection of the two relative curves at $p$. Then we apply Proposition \ref{proposition_higher_curve_case} to $\tilde Z_{\roi_{Z}}-\tilde Z_{\roi_{Z}}''$ and $\tilde Z'_{\roi_{Z}}-\tilde Z_{\roi_{Z}}''$ in $C_S$ and $C_S'$ respectively to move $\alpha$ to $K^M_j(k(Z'))$ supported on $Z'$. 
\end{proof}

\begin{proof}[Proof of Theorem \ref{main_theorem_in_text}]
In the case of (i) it remains to prove that $\gamma$ factorises through $\tilde\gamma$.  Let $\alpha_0\in C_1(X_k,0)=\bigoplus_{x\in X_k^{(d-1)}}K_{1}^Mk(x)$ be supported on some $x\in X_k^{(d-1)}$. As in the proof of \cite[Lem. 7.2]{SS10}, we can find a relative curve $Z\subset X$ (that is of dimension $2$ and flat over $S$) containing $x$ which is regular at $x$  and such that $Z\cap X_k$ contains $\overline{\{x\}}$ with multiplicity $1$. Let $Z_0$ be the special fiber of $Z$ and denote by $\cup_{i\in I} Z_0^{(i)}\cup \overline{\{x\}}$ the union of the pairwise different irreducible components of $Z_0$. Here the irreducible components different from $\overline{\{x\}}$ are indexed by $I$. Let $z$ be the generic point of $Z$. Now as in the proof of \cite[Lem. 2.1]{Lu16}, we can find a lift $\alpha\in k(z)^{\times}$ of $\alpha_0$ which specialises to $\alpha_0$ in $k(x)^{\times}$ and to $1$ in $K(Z_0^{(i)})^{\times}$ for all $i\in I$. Then $\gamma(\partial(\alpha_0))=\partial(\alpha)=0$ in $A_1(X,-1)$ which implies the above factorisation.

In the case of (ii) we can use the assumption on the special fiber to show that $\CH^{d+j}(X,j)$ is generated by Milnor K-groups supported on just one transversal curve lifting a rational point $q$. 
Let $Z_1,Z_2\in (X)^{d,t}$, $p=Z_1\cap X_k$ and $q=Z_2\cap X_k$ and $\alpha_1\in \ker(K^M_{j}(K(Z_1))\xto{\partial}K^M_{j-1}(k(p)))$.
By Proposition \ref{theorem_lifting_well_defined} it is sufficient to show that there is an $\alpha_2\in K^M_{j}(K(Z_2))$ such that $\alpha_1\equiv \alpha_2\in \CH^{d+j}(X,j)$.  
By making a base change along the morphism $Z_1\to S$ we may assume that $K(Z_1)\cong K$. Since $\CH^d(X_{k(p)})\cong\CH^d(X_k)\cong \bb Z$, there is a finite set of pairs of curves and rational functions $\{(C_i\subset X_k,f_i\in K(C_i)^\times)\}$ with the property that $\sum_i div(f_i)=p-q$. We lift each $C_i$ and $f_i$ exactly as in (i) and denote these lifts by $\tilde C_i$ and $\tilde f_i$. Then, by Proposition \ref{theorem_lifting_well_defined}, $\sum_i \partial_{\tilde C_i}(\{\tilde f_i,\alpha_1\})$ moves $\alpha_1\in K^M_{j}(K(Z_1))$ to an element in $\ker(K^M_{j}(K(Z_2))\xto{\partial}K^M_{j-1}(k(q)))$ since $\alpha_1$ is a symbol consisting of global units. Note that by a similar argument $\CH^{d+j}(X_k,j)\cong K^M_j(k)$.
\end{proof}




\begin{corollary}\label{corollary_main_thm_in_text}
Let the notation as in Theorem \ref{main_theorem_in_text}. \begin{enumerate}
\item There is a zigzag of isomorphisms
$$\CH^{d}(X_{k})\xleftarrow{\cong} \CH^{d}(X_{}) \xto{\cong}  \CH^{d}(X_{K}).$$
\item Assume that $\CH^d(X_k)\cong \bb Z$. Then for all $j\geq 0$ the pullback map induces an isomorphism 
$$g^*:\CH^{d+j}(X_{},j) \cong \ker[\partial: \CH^{d+j}(X_{K},j)\to  \CH^{d+j-1}(X_{k},j-1)]. $$
\end{enumerate}
\end{corollary}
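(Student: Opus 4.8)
The plan is to deduce both parts of Corollary \ref{corollary_main_thm_in_text} from Theorem \ref{main_theorem_in_text}, which I will treat as established, by combining it with the localisation sequence for higher Chow groups and the standard specialisation machinery. For part (i), the first isomorphism $\CH^d(X)\xto{\cong}\CH^d(X_k)$ is precisely Theorem \ref{main_theorem_in_text}(i), so the only work is to produce the second isomorphism $\CH^d(X)\xto{\cong}\CH^d(X_K)$. The restriction to the open generic fiber is the flat pullback $g^*\colon\CH^d(X)\to\CH^d(X_K)$, and its kernel/cokernel are controlled by the localisation sequence
\[
\CH^{d-1}(X_k)\to\CH^d(X)\xto{g^*}\CH^d(X_K)\to 0,
\]
where I am using that $X_k\hookrightarrow X$ is a closed immersion with open complement $X_K$ and that the relevant higher Chow term on the left sits in the correct codimension after the dimension shift ($X_k$ has codimension $1$, so cycles of codimension $d-1$ on $X_k$ push forward to codimension $d$ on $X$). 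Surjectivity of $g^*$ is then automatic from the sequence; for injectivity I would argue that the image of $\CH^{d-1}(X_k)\to\CH^d(X)$ is zero by comparing with the isomorphism of part (i), i.e. any class coming from the special fiber already restricts trivially under $\CH^d(X)\cong\CH^d(X_k)$.

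For part (ii), the target is the kernel of the boundary map $\partial\colon\CH^{d+j}(X_K,j)\to\CH^{d+j-1}(X_k,j-1)$, so again the natural tool is the localisation sequence for Bloch's higher Chow groups applied to the triple $(X_k,X,X_K)$:
\[
\CH^{d+j-1}(X_k,j)\to\CH^{d+j}(X,j)\xto{g^*}\CH^{d+j}(X_K,j)\xto{\partial}\CH^{d+j-1}(X_k,j-1).
\]
The claim that $g^*$ maps isomorphically onto $\ker\partial$ amounts to two things: exactness at $\CH^{d+j}(X_K,j)$ gives $\Im(g^*)=\ker\partial$ immediately, so surjectivity onto the kernel is free; the remaining content is that $g^*$ is injective, equivalently that the connecting map $\CH^{d+j-1}(X_k,j)\to\CH^{d+j}(X,j)$ is zero. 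Here I would invoke Theorem \ref{main_theorem_in_text}(ii) under the hypothesis $\CH^d(X_k)\cong\bb Z$: the theorem identifies $\CH^{d+j}(X,j)\cong\hat K^M_j(A)$ and $\CH^{d+j}(X_k,j)\cong K^M_j(k)$ compatibly with restriction, and since the specialisation map $\hat K^M_j(A)\to K^M_j(k)$ is (split) surjective with the relevant structure, the composite detecting classes pushed forward from the special fiber vanishes.

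The main obstacle I anticipate is the injectivity of $g^*$, i.e. showing the image of the push-forward from the special fiber into $\CH^{d+j}(X,j)$ is trivial. The localisation sequence alone only gives the four-term exactness; it does not by itself force the left-hand connecting map to vanish. To kill that term I expect to use the isomorphisms of Theorem \ref{main_theorem_in_text} crucially: a class in the image of $\CH^{d+j-1}(X_k,j)\to\CH^{d+j}(X,j)$ lies in the kernel of restriction to $X_k$ (the composite closed-then-open push-pull is compatible with the self-intersection of $X_k$, which carries the class into codimension that restricts to zero), but restriction $\CH^{d+j}(X,j)\to\CH^{d+j}(X_k,j)$ is an isomorphism by the theorem, so the class is zero. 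I would need to check carefully that the identification of $\CH^{d+j}(X,j)$ with $A_1(X,j-1)$ from Proposition \ref{identificationwithZero-cycles with coefficients in Milnor K-theory}(ii) is compatible with these boundary and restriction maps, since the whole argument hinges on matching the abstract localisation sequence with the concrete Gersten-complex description; verifying that compatibility, rather than any new geometric input, is where the delicate bookkeeping lies.
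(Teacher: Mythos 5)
Part (i) of your proposal is correct and is essentially the paper's own argument: the composite $\CH^{d-1}(X_k)\to\CH^d(X)\to\CH^d(X_k)$ vanishes (self-intersection with the principal divisor $X_k$), so Theorem \ref{main_theorem_in_text}(i) forces the connecting map in the localisation sequence to vanish, and $g^*$ is an isomorphism.

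Part (ii), however, contains a genuine gap. Your injectivity argument rests on the claim that the restriction map $\CH^{d+j}(X,j)\to\CH^{d+j}(X_k,j)$ ``is an isomorphism by the theorem.'' Theorem \ref{main_theorem_in_text}(ii) does not say this: it asserts that the two \emph{vertical} push-forward maps $p_*$ in Diagram (\ref{diagram_main_thm}) are isomorphisms, whereas the \emph{horizontal} restriction map corresponds under those identifications to the specialisation map $\hat K^M_j(A)\to K^M_j(k)$, which for $j\geq 1$ is surjective but far from injective (for $j=1$ its kernel is generated by symbols from $1+\mathfrak{m}\subset A^\times$). The paper flags exactly this point in the outline preceding the proof in Section \ref{section_smooth_case}: ``the bottom map in Diagram (\ref{diagram_main_thm}) is surjective but not injective.'' So your final step --- the class lies in $\ker(\mathrm{restriction})$, restriction is injective, hence the class is zero --- collapses for every $j\geq 1$, which is precisely the case at issue (for $j=0$ it is just part (i)). Your alternative remark that the specialisation map is (split) surjective does not repair this: surjectivity in the special-fiber direction says nothing about injectivity of $g^*$.

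The paper closes the gap by mapping the localisation sequence to the Gersten sequence $0\to\hat K^M_j(A)\to K^M_j(K)\to K^M_{j-1}(k)\to 0$ via the push-forwards $p_*$. Commutativity of the left-hand square shows that the injective composite $\CH^{d+j}(X,j)\xto{\cong}\hat K^M_j(A)\hookrightarrow K^M_j(K)$ factors through $g^*$, so $g^*$ is injective; exactness of the localisation sequence then identifies $\Im(g^*)$ with $\ker\partial$. In other words, the injection you need goes into the Milnor K-theory of the \emph{generic} fiber (via the Gersten conjecture for $A$), not into the Chow group of the special fiber; that is the idea missing from your proposal.
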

\begin{proof}
$(i)$  The fact that the composition
$$\CH^{d-1}(X_{k}) \to \CH^{d}(X_{}) \xto{\cong} \CH^{d}(X_{k})$$
is zero and that the second map is an isomorphism by Theorem \ref{main_theorem_in_text}(i) implies that the first map in the localisation sequence 
$$\CH^{d-1}(X_{k})\to \CH^{d}(X_{}) \xto{g^*}   \CH^{d}(X_{K})\to 0$$
is zero. Therefore $g^*$ is an isomorphism.

$(ii)$ There is a commutative diagram with exact rows
$$\xymatrix{
\CH^{d-1+j}(X_{k},j)\ar[r]^{} & \CH^{d+j}(X_{},j) \ar[d]^\cong   \ar[r]^{g^*}  &  \CH^{d+j}(X_{K},j) \ar[d] \ar[r]^-{\partial} &  \CH^{d+j-1}(X_{k},j-1)  \ar[d]^{\cong} \ar[r] & 0
\\
0\ar[r]^{} & \hat K^M_j(A)  \ar[r]^{}    & K^M_j(K)   \ar[r]^-{\partial} &  K^M_{j-1}(k)  \ar[r]^{} & 0. \
}$$
The first row is the localisation exact sequence for higher Chow groups and the second row is exact by assumption of the Gersten conjecture. The first and third vertical maps are isomorphisms by Theorem \ref{main_theorem_in_text}(ii). By a diagram chase this implies that the map $g^*$ induces the given isomorphism.

\end{proof}


\section{Higher zero-cycles}\label{section_higher_zero_cycles}

Let $A$ be a henselian discrete valuation ring with quotient field $K$ and residue field $k$. Let $X$ be a smooth and projective scheme over $A$ with generic fiber $X_K$ and special fiber $X_k$. Let $d$ be the relative dimension of $X$ over $\Spec(A)$. Generalising Theorem \ref{main_theorem_in_text}(ii) in a different direction, one may ask the following question: 
\begin{question}
Assume that the special fiber $X_k$ of $X$ is separably rationally connected. Is the restriction map
$$res^{d,i}_{\bb Z}:\CH^d(X,i)\to \CH^d(X_k,i)$$
surjective for all $i\geq 0$.
\end{question}
In this section we study the case $i=1$, in which we can make use of the isomorphisms
$$A_2(X,-1)\cong CH^d(X,1) \quad \mathrm{and} \quad A_1(X_k,0)\cong CH^d(X_k,1)$$
of Proposition \ref{identificationwithZero-cycles with coefficients in Milnor K-theory}. Note further that if the Gersten conjecture holds for the Milnor K-sheaf $\cal K^M_{d,X}$, then these groups are isomorphic to $H^{d-1}(X,\cal K^M_{d,X})$ and $H^{d-1}(X_k,\cal K^M_{d,X_k})$ respectively. We recall how the restriction map is defined on Rost's Chow groups with coefficients in Milnor K-theory. 
Let $\pi$ be some fixed local parameter of $A$. We define the restriction map
$$res_\pi: C_p(X,n)\rightarrow C_{p-1}(X_k,n+1)$$
to be the composition
$$res_{\pi}: C_p(X,n)\rightarrow C_{p-1}(X_K,n+1)\xrightarrow{\cdot\{-\pi\}} C_{p-1}(X_K,n+2)\xrightarrow{\partial}C_{p-1}(X_k,n+1).$$
The map $res_\pi$ depends on the choice of $\pi$ but the induced map on homology
$$res: A_p(X,n)\rightarrow A_{p-1}(X_k,n+1)$$
is independent of the choice (for more details see \cite[Sec. 2]{Lu16}). In the following we assume that we have fixed $\pi$ and drop it from the notation.
\begin{theorem}\label{theorem_higher_zero}
Assume that the special fiber $X_k$ of $X$ is separably rationally connected. Then the map
$$res^{d,1}_{\bb Z}:\CH^d(X,1)\to \CH^d(X_k,1)$$
is surjective.
\end{theorem}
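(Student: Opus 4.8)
The plan is to work with the identifications $\CH^d(X_k,1) \cong A_1(X_k,0)$ and $\CH^d(X,1) \cong A_2(X,-1)$ of Proposition \ref{identificationwithZero-cycles with coefficients in Milnor K-theory} and to lift a representing cycle from the special fibre to $X$ by hand. Concretely, a class $\xi \in \CH^d(X_k,1)$ is represented by a finite sum $\sum_i (C_i, f_i)$, where the $C_i \subset X_k$ are integral curves, $f_i \in k(C_i)^\times = K^M_1(k(C_i))$, and $\sum_i \operatorname{div}(f_i) = 0$ as a zero-cycle on $X_k$. A class in $A_2(X,-1)$ is represented by a sum $\sum_i (S_i, g_i)$ with $S_i \subset X$ integral surfaces and $g_i \in k(S_i)^\times$, subject to $\sum_i \operatorname{div}_{S_i}(g_i) = 0$ in $\bigoplus_{x \in X_{(1)}} \bb Z$; I will take the $S_i$ horizontal, so that each is flat of relative dimension one over $A$, i.e.\ a relative curve. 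Since the chain-level restriction $res_\pi$ sends such a pair to $((S_i)_k,\, \overline{g_i})$ (the tame symbol of $\{-\pi, g_i\}$ along $X_k$), the task is to produce, for a suitable representative of $\xi$, horizontal relative curves $S_i$ meeting $X_k$ transversally with lifts $g_i$ of the $f_i$ whose divisors cancel.

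First I would reduce to a convenient representative. Using the abundance of rational curves on the separably rationally connected variety $X_k$ together with the moving technique of Lemma \ref{lemma_moving}, I would arrange each $C_i$ to be a smooth rational curve; this is needed so that the relative curves $S_i$ below can be taken with genus-zero generic fibre, to which Lemma \ref{lemma_degeneration_dim1} and Proposition \ref{proposition_curve_case} apply. Next, for every closed point $p$ in the common support of the $\operatorname{div}(f_i)$ I would fix a single horizontal curve $Z_p \subset X$, transversal to $X_k$ with $Z_p \cap X_k = \{p\}$, serving as a common lift of $p$.

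The heart of the construction is the simultaneous lifting of each $(C_i, f_i)$. As in Step~3 of the proof of Proposition \ref{theorem_lifting_well_defined}, I would deform $C_i$ to the handle of a rational comb and invoke Koll\'ar's comb and smoothing results (\cite{Kollar2004}, \cite{AraujoKollar2003}) to spread it out to a relative curve $S_i \subset X$, flat and projective over $A$, with genus-zero generic fibre and special fibre a rational comb containing $C_i$ and passing through the relevant $Z_p$, and with $g_i \equiv 1$ arranged on the teeth. On such an $S_i$ Proposition \ref{proposition_curve_case}(ii) gives $\Pic(S_i) \cong \Pic((S_i)_k)$; since the horizontal divisor $\sum_p \operatorname{ord}_p(f_i)\,[Z_p]$ restricts on $(S_i)_k$ to $\operatorname{div}(f_i)$ (extended by zero on the teeth), which is principal on the comb, it is principal on $S_i$, so there is $g_i \in k(S_i)^\times$ with $\operatorname{div}_{S_i}(g_i) = \sum_p \operatorname{ord}_p(f_i)\,[Z_p]$. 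Finally I would use the higher curve-case comparison of Proposition \ref{proposition_higher_curve_case} (with $j=1$) to pin down the remaining unit ambiguity of $g_i$, so that $\overline{g_i} = f_i$ in $K^M_1(k(C_i))$ and the choices are compatible at points $p$ lying on several of the $C_i$, where the common lift $Z_p$ must be used consistently.

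With these choices, $\sum_i (S_i, g_i) \in C_2(X,-1)$ has boundary
\[
\partial\Big(\sum_i (S_i, g_i)\Big) = \sum_i \operatorname{div}_{S_i}(g_i) = \sum_p \Big(\sum_i \operatorname{ord}_p(f_i)\Big)[Z_p] = 0,
\]
since $\sum_i \operatorname{div}(f_i) = 0$ means precisely that each coefficient $\sum_i \operatorname{ord}_p(f_i)$ vanishes. Hence $\sum_i (S_i, g_i)$ defines a class in $A_2(X,-1) \cong \CH^d(X,1)$ whose restriction is $\sum_i ((S_i)_k, \overline{g_i}) = \sum_i (C_i, f_i) = \xi$, proving surjectivity. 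The main obstacle is the construction of the preceding paragraph: producing the relative curves $S_i$ with prescribed rational special fibre through the common lifts $Z_p$, while simultaneously controlling both the divisor (via $\Pic$) and the unit (via $K^M_1$) of $g_i$ so that the pieces assemble into a genuine cycle. This is exactly where separable rational connectedness of $X_k$ and the deformation theory of rational curves enter, one homological degree above the argument of Proposition \ref{theorem_lifting_well_defined}; the reduction to rational $C_i$ in the first step is the other point requiring care.
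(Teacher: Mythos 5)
Your overall skeleton (lift a representative $\sum_i(C_i,f_i)$ of a class in $A_1(X_k,0)\cong \CH^d(X_k,1)$ to a cycle in $C_2(X,-1)$ whose class in $A_2(X,-1)\cong\CH^d(X,1)$ restricts to it) is sound, but two of your construction steps have genuine gaps. First, the reduction ``arrange each $C_i$ to be a smooth rational curve'' is not justified: Lemma \ref{lemma_moving} is a statement about $A_0$, i.e.\ about moving the \emph{zero-dimensional} support of classes, and gives no way to move the one-dimensional support of a class in $A_1(X_k,0)$ onto smooth rational curves while keeping the relation $\sum_i\mathrm{div}(f_i)=0$. No such moving result appears in the paper, and it is far from clear that it holds; yet your strategy needs it essentially, because Proposition \ref{proposition_curve_case}(ii) together with Lemma \ref{lemma_degeneration_dim1} forces the relative curves $S_i$ to have genus-zero generic fibre, hence arithmetic genus zero special fibre, hence rational handle $C_i$. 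Second, even granting rational $C_i$, the chain-level restriction of $(S_i,g_i)$ is not $(C_i,f_i)$: a $g_i$ with $\mathrm{div}_{S_i}(g_i)=\sum_p\mathrm{ord}_p(f_i)[Z_p]$ is determined only up to a unit of $A$, so its reduction to the comb equals $c\,f_i$ on the handle and the \emph{constant} $c\,f_i(q_j)$ on the tooth attached at $q_j$. Constants on complete curves are in general nonzero classes in $\CH^d(X_k,1)$ (for instance, a constant $c\neq 1$ on a line in $\bb P^2$ is the generator $c$ of $\CH^2(\bb P^2,1)\cong k^\times$), so these tooth terms cannot be discarded. Arranging $\overline{g_i}\equiv 1$ on all teeth forces the attachment points into the finite set $f_i^{-1}(1)$, which conflicts with the genericity of attachment points required by Koll\'ar's smoothing theorem; and Proposition \ref{proposition_higher_curve_case}, which compares symbols supported on two \emph{horizontal} curves, does not control these vertical constants.

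It is instructive to see how the paper's proof avoids both problems by reversing the order of operations. Following \cite[Prop.\ 2.2]{Lu16}, one first lifts at the chain level with the \emph{restriction} prescribed rather than the divisor: by \cite[Lem.\ 2.1]{Lu16}, through each $C_i$ one finds a relative surface and a function on it specialising exactly to $f_i$ on $C_i$ and to $1$ on every other component of the surface's special fibre, so $res(\beta)=\sum_i(C_i,f_i)$ on the nose, with no constant discrepancies and no rationality assumption on the $C_i$. The price is that $x:=\partial\beta$ need not vanish; but $res(x)=0$ in $Z_0(X_k)$, and the whole difficulty is concentrated in the purely cycle-theoretic statement that such an $x$ equals $\partial\xi$ for some $\xi\in\ker[res:C_2(X,-1)\to C_1(X_k,0)]$, after which a diagram chase concludes. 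That statement is proved for relative curves via the vanishing of the relative Picard group $\Pic(X,X_k)$ of functions congruent to $1$ along the special fibre (using Proposition \ref{proposition_curve_case}), and the general case is reduced to relative curves by exactly the comb-deformation steps of Proposition \ref{theorem_lifting_well_defined}, where the combs are auxiliary rational curves joining specialisation points of horizontal curves --- not the curves carrying the class. Your proposal would become a proof only if you could either establish the rational-curve moving step for $\CH^d(X_k,1)$ and solve the tooth-constant problem, or restructure it along these lines.
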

\begin{proof}
We closely follow the proof of \cite[Prop. 2.2]{Lu16}. The diagram to consider is the following: 
$$\begin{xy} 
  \xymatrix{
     C_3(X,-1)=\bigoplus_{x\in {X}_{(3)}}K^M_2(x)  \ar[r]^-{res} \ar[d]_{\partial} &  C_2(X_k,0)=\bigoplus_{x\in {X_k}_{(1)}}K^M_2(x) \ar[d]^{\partial}  \\
     C_2(X,-1)=\bigoplus_{x\in {X}_{(2)}}k(x)^* \ar[r]^-{res} \ar[d]_{\partial} &  C_1(X_k,0)=\bigoplus_{x\in {X_k}_{(1)}}k(x)^* \ar[d]^{\partial}  \\
      Z_1(X) \ar[r]^{res}     & Z_0(X_k)  
  }
\end{xy} $$
Here $Z_i(X):=C_{i}(X,-i)$ are just the cycles of dimension $i$. The restriction map in the lowest degree $res:Z_1(X) \to Z_0(X_k)$ agrees with the restriction map on cycles defined by Fulton in \cite[Rem. 2.3]{Fulton1998}.  
We want to show that the middle horizontal map induces a surjection on homology. First note that the map $res:C_2(X,-1)\to C_1(X_k,0)$ is surjective by \cite[Lem. 2.1]{Lu16}. Let $x\in \ker[res:Z_1(X) \to Z_0(X_k)]$. We show that there is a $\xi\in \ker[res:C_2(X,-1)\to C_1(X_k,0)]$ with $\partial(\xi)=x$. By a diagram chase this then implies the theorem. In particular we do not need the upper line of the above diagram.
 
We first treat the case of relative dimension $d=1$ assuming that $X$ is as in Proposition \ref{proposition_curve_case}(ii). In this case the above diagram becomes the following the diagram
$$\begin{xy} 
  \xymatrix{
     C_2(X,-1)=K(X)^*  \ar[r]^-{res} \ar[d]_{\partial} &  C_1(X_k,0)=\bigoplus_{\mu\in X_k^{(0)}}k(\mu)^* \ar[d]^{\partial}  \\
      Z_1(X) \ar[r]^{res}     & Z_0(X_k)  
  }
\end{xy} $$
We consider the following short exact sequence of sheaves:
\begin{equation}\label{eq1}
0\rightarrow \mathcal{O}^*_{X;X_k}\rightarrow \mathcal{M}^*_{X;X_k}\rightarrow Div(X,X_k)\rightarrow 0,
\end{equation}
where $\mathcal{M}^*_{X;X_k} (\text{resp. }\mathcal{O}^*_{X;X_k})$ denotes the sheaf of invertible meromorphic functions (resp. invertible regular functions) relative to $\text{Spec}(A)$ and congruent to $1$ in the generic points of $X_k$, i.e. in each $\mathcal{O}_{X,{\mu}}$, and $Div(X,X_k)$ is the sheaf associated to $\mathcal{M}^*_{X;X_k}/\mathcal{O}^*_{X;X_k}$. In other words, $Div(X,X_k)(U)$ is the set of relative Cartier divisors on $U\subset X$ which restrict to zero in $Z_0(X_k)$. For the concept of relative meromorphic functions and divisors see \cite[Sec. 20, 21.15]{EGA4}.
We want to show that $Div(X,X_k)(X)/\mathcal{M}^*_{X;X_k}(X)=0$. The short exact sequence (\ref{eq1}) implies that $Div(X,X_k)(X)/\mathcal{M}^*_{X;X_k}(X)$ injects into $\Pic(X,X_k)$. But the latter group fits into the exact sequence 
$$H^0(X,\roi_X^\times)\twoheadrightarrow H^0(X_k,\roi_{X_k}^\times)\to \Pic(X,X_k)\to \Pic(X)\xto{\cong} \Pic(X_k)$$
in which the first map is surjective, since it is possible to lift units, which can be shown using the Stein factorization. Furthermore, the map on the right is an isomorphism by Proposition \ref{proposition_curve_case}. Therefore $\Pic(X,X_k)=0$.

Let $d>1$. As noted in the beginning of the proof, it suffices to show that for $x\in \ker[res:Z_1(X) \to Z_0(X_k)]$ there is a $\xi\in \ker[res:C_2(X,-1)\to C_1(X_k,0)]$ with $\partial(\xi)=x$. For the reduction to the case $d=1$ we now proceed as in the proof of Theorem \ref{theorem_lifting_well_defined} with the following modifications. 
We embed $\tilde Z_{\roi_{Z}}$ and $\tilde Z'_{\roi_{Z}}$ as $\tilde Z_{\roi_{Z}}=(Z_{\roi_{Z}},(1:0),(1:0))$ and $\tilde Z'_{\roi_{Z}}=(Z'_{\roi_{Z}},(1:0),(1:0))$ into $X\times \bb P^2_{\roi_Z}$. In particular they intersect the special fiber in the same point, say $p$. As in \textit{Step 3} of the proof of Theorem \ref{theorem_lifting_well_defined} we can find relative curves of genus zero $C_S$ and $C_S'$ containing $\tilde Z_{\roi_{Z}}$ and $\tilde Z'_{\roi_{Z}}$ respectively with the property that $C_S$ and $C_S'$ intersect transversally in $p$. Let $\tilde Z_{\roi_{Z}}''$ the intersection of the two relative curves at $p$. Then we apply the case $d=1$ proved above to $\tilde Z_{\roi_{Z}}-\tilde Z_{\roi_{Z}}''$ and $\tilde Z'_{\roi_{Z}}-\tilde Z_{\roi_{Z}}''$ in $C_S$ and $C_S'$ respectively.
\end{proof}

\begin{remark}
Let $j:X_K\to X$ denote the inclusion of the generic fiber. Again, the group $\CH^d(X,1)$ links the groups $\CH^d(X_K,1)$ and $\CH^d(X_k,1)$:
$$\xymatrix{
& \CH^d(X,1)  \ar@{=}[d] \ar[r]^{res^{d,1}_{\bb Z}} & \CH^d(X_k,1) & \\
\CH^{d-1}(X_0,1) \ar[r] & \CH^d(X,1)    \ar[r]^{j^*} &  \CH^d(X_K,1)      \ar[r]  & \CH^{d-1}(X_k,0)=\CH_1(X_k). \
}$$
The importance of the group $\CH^d(X,1)$ comes from the fact that it is related to the torsion in $\CH^d(X_K)$ (see \cite[Sec. 3]{Lu16}).
\end{remark}

\begin{corollary}
Let the notation be as in Theorem \ref{theorem_higher_zero}. Assume that the Gersten conjecture holds for the Milnor K-sheaf $\K^M_{d,X}$.
Then $$H^d(X,\K^M_{d,X\mid X_k})=0.$$
\end{corollary}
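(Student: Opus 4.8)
The plan is to realize $H^d(X,\cal K^M_{d,X\mid X_k})$ as a term in the long exact cohomology sequence attached to a short exact sequence of Milnor $K$-sheaves, and then to read off its vanishing from Theorem~\ref{theorem_higher_zero} together with Theorem~\ref{main_theorem_in_text}(i).

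First I would set up the defining short exact sequence of Zariski sheaves on $X$. Writing $i\colon X_k\hookrightarrow X$ for the closed immersion of the special fiber, restriction of symbols gives a map $\cal K^M_{d,X}\to i_*\cal K^M_{d,X_k}$, and by definition $\cal K^M_{d,X\mid X_k}$ is its kernel, so that
$$0\to \cal K^M_{d,X\mid X_k}\to \cal K^M_{d,X}\to i_*\cal K^M_{d,X_k}\to 0$$
is exact. The only point to check is surjectivity on the right: on stalks at $x\in X_k$ the map is $K^M_d(\roi_{X,x})\to K^M_d(\roi_{X_k,x})$ induced by the surjection of local rings $\roi_{X,x}\to\roi_{X_k,x}$, and this is surjective because units lift along surjections of local rings.

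Next I would take the long exact sequence in Zariski cohomology. Since $i$ is a closed immersion one has $H^p(X,i_*\cal K^M_{d,X_k})\cong H^p(X_k,\cal K^M_{d,X_k})$, so the relevant segment reads
$$H^{d-1}(X,\cal K^M_{d,X})\xrightarrow{res} H^{d-1}(X_k,\cal K^M_{d,X_k})\xrightarrow{\partial} H^d(X,\cal K^M_{d,X\mid X_k})\xrightarrow{\alpha} H^d(X,\cal K^M_{d,X})\xrightarrow{res} H^d(X_k,\cal K^M_{d,X_k}).$$
Granting the Gersten conjecture for $\cal K^M_{d,X}$, I would identify these groups with objects already studied. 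By Proposition~\ref{identificationwithZero-cycles with coefficients in Milnor K-theory} together with Gersten one has $H^{d-1}(X,\cal K^M_{d,X})\cong\CH^d(X,1)$ and $H^{d-1}(X_k,\cal K^M_{d,X_k})\cong\CH^d(X_k,1)$, under which the left-hand $res$ becomes the map $res^{d,1}_{\bb Z}$ of Theorem~\ref{theorem_higher_zero}; and one has $H^d(X,\cal K^M_{d,X})\cong\CH^d(X)$, $H^d(X_k,\cal K^M_{d,X_k})\cong\CH^d(X_k)$, under which the right-hand $res$ becomes the restriction map of Theorem~\ref{main_theorem_in_text}(i). The compatibility worth spelling out is that the sheaf-level restriction agrees with the residue $\partial\circ\{-\pi\}$ defining $res^{d,1}_{\bb Z}$, which holds because on a symbol of units one has $\partial(\{-\pi,a_1,\dots,a_d\})=\{\bar a_1,\dots,\bar a_d\}$.

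Finally I would run the diagram chase. By Theorem~\ref{theorem_higher_zero} the left-hand $res$ is surjective, which forces $\partial=0$ and hence makes $\alpha$ injective. By Theorem~\ref{main_theorem_in_text}(i) the right-hand $res$ is an isomorphism, hence injective, so its kernel is zero; as this kernel equals $\Im(\alpha)$, the injective map $\alpha$ has image $0$, whence $H^d(X,\cal K^M_{d,X\mid X_k})=0$. I expect the main obstacle to be bookkeeping rather than substantive: once the two input theorems are granted, the content lies entirely in matching the two maps in degrees $d-1$ and $d$ with $res^{d,1}_{\bb Z}$ and with the restriction of Theorem~\ref{main_theorem_in_text}(i), and in checking that the sheaf restriction and the $\{-\pi\}$-residue coincide. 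The genuinely hard input has already been absorbed into Theorems~\ref{main_theorem_in_text} and~\ref{theorem_higher_zero}.
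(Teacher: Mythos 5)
Your proposal is correct and follows essentially the same route as the paper's proof: the same short exact sequence $0\to \cal K^M_{d,X\mid X_k}\to \cal K^M_{d,X}\to \cal K^M_{d,X_k}\to 0$, the same long exact cohomology sequence, the same identifications of its terms with $\CH^d(X,1)$, $\CH^d(X_k,1)$, $\CH^d(X)$, $\CH^d(X_k)$, and the same conclusion from the surjectivity in Theorem \ref{theorem_higher_zero} together with the isomorphism in Theorem \ref{main_theorem_in_text}(i). The details you spell out (stalkwise surjectivity of the restriction of sheaves, and the compatibility of the sheaf-level restriction with the $\{-\pi\}$-residue defining $res^{d,1}_{\bb Z}$) are precisely what the paper leaves implicit in its appeal to ``the remarks at the beginning of the section.''
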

\begin{proof}
The short exact sequence of
sheaves on $X$
$$0\to \K^M_{d,X\mid X_k}\to \K^M_{d,X} \to \K^M_{d, X_k} \to 0$$
induces an exact sequence 
$$\to H^{d-1}(X,\K^M_{d,X}) \to H^{d-1}(X,\K^M_{d, X_k})  \to H^d(X,\K^M_{d,X\mid X_k}) \to H^d(X,\K^M_{d,X}) \to H^d(X,\K^M_{d, X_k}) \to 0$$
which, by the remarks at the beginning of the section, is isomorphic to
$$\to \CH^{d}(X,1) \twoheadrightarrow \CH^{d}(X_k,1)  \to H^d(X,\K^M_{d,X\mid X_k}) \to \CH^d(X) \xto{\cong} \CH^d(X_k) \to 0.$$
The statement therefore follows from Theorem \ref{main_theorem_in_text} and Theorem \ref{theorem_higher_zero}.
\end{proof}


\section{Conjectures in the non-smooth case and the case of rational surfaces}\label{section_d=2}
In this section we study the case of smooth rationally connected varieties over local fields which do not have good reduction. More precisely, we would like to understand how the Chow group of $1$-cycles of a regular model of such a variety over the ring of integers relates to the Chow group of zero cycles of the generic fiber and to a cohomological version of the Chow group of zero cycles of the special fiber. One motivation is to use finiteness results about the latter to deduce the following conjecture:
\begin{conjecture}[Koll\'ar-Szab\'{o}]\label{conjecture_Kollar_Szabo}
Let $X_K$ be a $d$-dimensional, smooth, projective, separably rationally connected variety defined over a local field. Then $\CH^d(X_K)_0:=\ker[deg:\CH_0(X_K)\to \bb Z]$ is finite.
\end{conjecture}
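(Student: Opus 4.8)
The plan is to reduce the finiteness of $\CH^d(X_K)_0$ to a statement about the special fibre of an integral model, where it becomes finiteness of a cohomological Chow group of zero-cycles. Write $\roi_K$ for the ring of integers of the local field $K$, set $d=\dim X_K$, and choose a regular (if necessary semistable, after an alteration of de Jong--Gabber type) model $X$ flat and projective over $\Spec(\roi_K)$ with generic fibre $X_K$ and special fibre $X_k$; then $\dim X=d+1$, so $\CH^d(X)=\CH_1(X)$. The localisation sequence for higher Chow groups yields an exact sequence $\CH^{d-1}(X_k)\to \CH^d(X)\xrightarrow{j^*}\CH^d(X_K)\to 0$, and since the pushforward $p_*\colon \CH^d(X)\to \bb Z$ agrees with $\deg\circ j^*$ (vertical one-cycles push to zero, while a horizontal one maps to the degree of its generic point over $K$), the restriction $j^*$ induces a surjection $\CH^d(X)_0\twoheadrightarrow \CH^d(X_K)_0$ on degree-zero parts. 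Hence it suffices to prove that $\CH^d(X)_0$ is finite.

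Next I would invoke the cohomological description of $\CH^d(X)$ proposed in the introduction: assuming the Gersten conjecture for Milnor K-theory together with the conjectural isomorphism $res^d\colon \CH^d(X)\cong \projlim_n H^d(X_k,\cal K^M_{d,X_n})$, the subgroup $\CH^d(X)_0$ is identified with the degree-zero part of this cohomological Chow group of zero-cycles of the thickened special fibre. This reduces the conjecture to a finiteness statement living entirely over the finite residue field of characteristic $p$. Two results from Section \ref{section_Filtrations on higher zero-cycles} organise the remaining work. First, since $X_K$ is separably rationally connected, hence rationally connected, Theorem \ref{theorem_finite_exponent} with $j=0$ shows that $\CH^d(X_K)_0$ is of finite exponent, so only finitely many primes $\ell$ contribute and it is enough to bound each $\ell$-primary part. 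Second, Corollary \ref{corollary_torsion_fe} disposes of the primes $\ell\neq p$: for these the $\ell$-primary torsion of $\CH^d(X_K)_0$ is already finite (in the equal-characteristic case an analogue of that corollary would be required). The entire problem thus concentrates on the $p$-primary part, $p$ being the residue characteristic.

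The hard part will be precisely this $p$-primary contribution, together with the two statements I have used as black boxes. The $p$-part lies outside the reach of the prime-to-$p$ \'etale cycle class argument of Corollary \ref{corollary_torsion_fe}, and controlling it requires finiteness of the appropriate $p$-adic cohomology of $X_k$ over the finite residue field---for instance logarithmic Hodge--Witt or syntomic cohomology feeding into $\projlim_n H^d(X_k,\cal K^M_{d,X_n})$---where one must in addition show that the inverse limit and its $\lim^1$ terms introduce no infinite $p$-divisible part, a point for which the bounded exponent from Theorem \ref{theorem_finite_exponent} should be the crucial leverage. More fundamentally, the whole reduction rests on the isomorphism $res^d$, which the present paper establishes only for $d=2$ (Section \ref{section_d=2}). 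I therefore expect the genuine obstacle to be twofold: proving $res^d$ in the bad-reduction case in arbitrary relative dimension, and establishing the ($p$-adic) finiteness of the resulting cohomological Chow group of zero-cycles of a possibly singular special fibre over a finite field, for which one would have to develop the finiteness theory of the Levine--Weibel-type groups of Appendix \ref{section_coh_chow}.
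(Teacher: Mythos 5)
The statement you were asked to prove is not a theorem of the paper but an open conjecture (due to Koll\'ar--Szab\'o): the paper records it at the start of Section \ref{section_d=2} precisely to motivate Conjecture \ref{conjecutre_main}, and the only cases it settles are good reduction (via Theorem \ref{main_theorem_in_text}, Corollary \ref{corollary_main_thm_in_text} and the quoted Kato--Saito theorem, or via \cite{Kollar2003}) and relative dimension two over a local field (Colliot-Th\'el\`ene, Theorem \ref{theorem_CT_surfaces}). Your proposal is in fact the same strategy the paper itself sketches (``One motivation is to use finiteness results about the latter to deduce the following conjecture''), but it is a conditional reduction, not a proof, and the steps you treat as black boxes are exactly where the content of the conjecture lies. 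Concretely: (1) the identification $res^d\colon \CH^d(X)\cong \projlim_n H^{d}(X_k,\cal K^M_{d,X_n})$ is the paper's Conjecture \ref{conjecutre_main}, for which evidence is given only for $d=2$ (Theorem \ref{theorem_surfaces}), and even that theorem is conditional on the Gersten conjecture and on an unproven surjectivity/compatibility hypothesis; (2) no finiteness statement for $\projlim_n H^{d}(X_k,\cal K^M_{d,X_n})$, or for the Levine--Weibel-type groups of Appendix \ref{section_coh_chow} for a singular fibre over a finite field, exists in the paper or in the sources it cites. Moreover the prime-to-$p$ finiteness you obtain from Theorem \ref{theorem_finite_exponent} and Corollary \ref{corollary_torsion_fe} was already known: by the remark following Corollary \ref{corollary_torsion_fe}, Saito--Sato \cite[Cor. 0.5]{SS10} show $\CH_0(X_K)_0$ is a direct sum of a finite group and a $p$-primary group of finite exponent. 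So the entire weight of the conjecture rests on the $p$-primary part, which your argument does not touch.

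There is also a gap earlier in your reduction than you acknowledge: the existence of a \emph{regular} projective model of $X_K$ over $\roi_K$ is itself open in mixed characteristic once $d>2$. De Jong--Gabber alterations do not produce a model of $X_K$; they produce a regular model of an alteration $X'_K\to X_K$, a generically finite cover, and transporting finiteness of $\CH^d(X'_K)_0$ back to $\CH^d(X_K)_0$ by a trace argument only controls the part of the group killed by the degree of the alteration. Gabber's refinement lets you choose that degree prime to any single prime $\ell\neq p$, but not prime to $p$ -- so the alteration step loses control of precisely the $p$-primary part that is the remaining open content. In short: as a map of the problem, your proposal is accurate and coincides with the paper's intended route; as a proof, it has genuine gaps (Conjecture \ref{conjecutre_main}, $p$-adic finiteness over the finite residue field, and existence of a regular model), each of which is an open problem rather than a routine verification.
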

Conjecture \ref{conjecture_Kollar_Szabo} is known to be true if $X_K$ has good reduction. This is due to Koll\'ar and Szab\'{o} \cite[Thm. 5]{Kollar2003}. They even show that $\CH^d(X_K)_0=0$. This also follows from the main theorem of \cite{Kollar2004} or our Corollary \ref{corollary_main_thm_in_text}, combined with the following theorem:
\begin{theorem}(Kato-Saito \cite[Thm. 1]{KaS83})
If $k$ is a finite field and $X_k$ a $d$-dimensional, smooth, projective, separably rationally connected variety over $k$, then $\CH^d(X_k)_0=0$.
\end{theorem}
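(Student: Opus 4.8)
The plan is to combine the unramified class field theory of Kato and Saito with two inputs: that $\CH^d(X_k)_0$ is already of finite exponent, and that a separably rationally connected variety is geometrically simply connected. Throughout write $G=\Gal(\bar k/k)\cong\hat{\bb Z}$ and $\bar X=X_k\times_k\bar k$. The first observation I would record is that $\CH^d(X_k)_0=\ker[\deg:\CH_0(X_k)\to\bb Z]$ has no nonzero divisible subgroup: by Theorem \ref{theorem_finite_exponent} applied with $j=0$, this group is killed by some integer $N>0$, and a group of finite exponent contains no nonzero divisible element. This is precisely the feature of separable rational connectedness that lets us upgrade statements ``modulo divisible subgroups'' to honest vanishing statements.

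Next I would invoke the reciprocity map of Kato--Saito
$$\rho:\CH_0(X_k)_{\deg 0}\to\pi_1^{\mathrm{ab}}(X_k)^0$$
landing in the geometric part $\pi_1^{\mathrm{ab}}(X_k)^0=\ker(\pi_1^{\mathrm{ab}}(X_k)\to G)$, whose kernel is contained in the maximal divisible subgroup of the source. By the previous step that kernel is trivial, so $\rho$ is injective, and it suffices to show the target vanishes. For this I would use the homotopy exact sequence
\begin{equation*}
1\to\pi_1(\bar X)\to\pi_1(X_k)\to G\to 1,
\end{equation*}
from which a standard argument identifies $\pi_1^{\mathrm{ab}}(X_k)^0$ with the image of $\pi_1(\bar X)$ in $\pi_1^{\mathrm{ab}}(X_k)$, hence as a quotient of the $G$-coinvariants $\bigl(\pi_1^{\mathrm{ab}}(\bar X)\bigr)_G$.

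The key geometric step is then that $\pi_1(\bar X)=1$: a separably rationally connected variety over an algebraically closed field has trivial \'etale fundamental group. Concretely, any two closed points can be joined by a very free rational curve, $\bb P^1$ is simply connected, so every connected finite \'etale cover admits a section and must be trivial. This forces $\bigl(\pi_1^{\mathrm{ab}}(\bar X)\bigr)_G=0$, hence $\pi_1^{\mathrm{ab}}(X_k)^0=0$, and therefore $\CH^d(X_k)_0=0$ by the injectivity of $\rho$.

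The main obstacle will be making the class-field-theoretic input precise in the exact form needed. For surfaces this is the content of \cite{KaS83}, but in higher dimension and in characteristic $p$ one must be careful that the reciprocity map is injective modulo divisibles \emph{including} the $p$-primary part, which is where the finite-exponent input is indispensable. The second delicate point is the vanishing of the geometric \'etale fundamental group for a merely separably (rather than ordinarily) rationally connected variety in positive characteristic; this is exactly where separability, through the existence of very free rational curves, is used in an essential way rather than just the abstract connectedness of rational chains.
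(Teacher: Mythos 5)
The paper contains no proof of this statement at all: it is quoted with a bare citation, and in fact \cite{KaS83} by itself supplies only the reciprocity isomorphism, so the geometric input you identify really is needed on top of the citation. Your architecture is exactly the intended unwinding of that citation: Kato--Saito reciprocity plus triviality of the geometric fundamental group, and your identification of $\pi_1^{\mathrm{ab}}(X_k)^0$ with a quotient of the coinvariants $\bigl(\pi_1^{\mathrm{ab}}(\bar X)\bigr)_G$ is correct. One remark on the class-field-theoretic half: over a finite field Kato--Saito prove more than the form you invoke, namely that $\rho:\CH_0(X_k)_{\deg 0}\to\pi_1^{\mathrm{ab}}(X_k)^0$ is an isomorphism of \emph{finite} groups, so injectivity is free and the detour through Theorem \ref{theorem_finite_exponent} and divisible subgroups is dispensable here; it is harmless, though, and it does make the method robust enough to run with only the weaker ``kernel contained in the maximal divisible subgroup'' statement, which is what one has over other base fields.

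The substantive weak point is the key geometric step. Since $k$ is finite you are unavoidably in characteristic $p$, where the classical proof of simple connectedness of rationally connected varieties (Euler characteristic of $\roi_X$ plus Hodge symmetry) is unavailable, and the concrete justification you give --- points are joined by very free curves, $\bb P^1$ is simply connected, hence every connected finite \'etale cover ``admits a section and must be trivial'' --- is not a valid argument: lifting rational curves to a cover trivialises the cover only over those curves, produces no section of $Y\to X$, and puts no constraint on monodromy. Indeed, the same reasoning would apply verbatim to rationally chain connected varieties, for which the conclusion is \emph{false} in characteristic $p$: Shioda's unirational Godeaux surfaces (free $\bb Z/5$ quotients of the Fermat quintic, e.g.\ in characteristic $2$) are unirational, hence rationally chain connected, yet have nontrivial \'etale fundamental group. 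The statement you need is nonetheless a true theorem for separably rationally connected varieties, but it is deep: in characteristic $p$ the full vanishing $\pi_1(\bar X)=1$ is deduced from the de Jong--Starr theorem on sections of SRC fibrations over curves (Chambert-Loir, Starr), while the abelianised statement --- which is all you need --- is often obtained in the literature by combining Kato--Saito reciprocity with the vanishing of $\CH_0(X_k)_0$ over finite fields proved geometrically by Koll\'ar--Szab\'o \cite{Kollar2003}, i.e.\ with the very statement you are proving; quoting it in that form would make your argument circular. So your proof stands once the sketch is replaced by a citation to the de Jong--Starr route; alternatively, Koll\'ar--Szab\'o's direct geometric argument proves the theorem outright, with no class field theory at all.
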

In general, it cannot be expected that $\CH^d(X_K)_0=0$. In \cite[Thm. 1.1]{SS14}, Saito and Sato calculate this group for certain cubic surfaces to be isomorphic to $\bb Z/3\oplus \bb Z/3$. Nevertheless, Conjecture \ref{conjecture_Kollar_Szabo} holds in dimension two:
\begin{theorem}(Colliot-Th\'el\`ene \cite[Thm. A]{CT1983})\label{theorem_CT_surfaces}
Let $X_K$ be a smooth projective rational surface over a local field $K$. Then $\CH^2(X_K)_0$ is finite.
\end{theorem}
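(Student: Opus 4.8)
The plan is to read $A_0(X_K)=\ker(\deg\colon\CH_0(X_K)\to\bb Z)$ (the classical notation of \cite{CT1983}, matching $\CH^d(X_K)_0$ in Conjecture \ref{conjecture_Kollar_Szabo}) through its cohomological incarnation and then reduce finiteness to finiteness statements in Galois cohomology over $K$. The starting point is the Bloch--Quillen formula: since $X_K$ is a smooth projective surface, the Gersten resolution of the Zariski sheaf $\mathcal{K}_2$ gives $\CH_0(X_K)\cong H^2(X_K,\mathcal{K}_2)$, under which $A_0(X_K)$ is the degree-zero part. The geometric situation over $\overline X=X_K\times_K\overline{K}$ is rigid: rationality forces $A_0(\overline X)=0$ (so $\CH_0(\overline X)\cong\bb Z$), the geometric Picard group $\Pic(\overline X)=\operatorname{NS}(\overline X)$ is a finitely generated free $\bb Z$-module, and the sheaf cohomology $H^q(\overline X,\mathcal{K}_2)$ is computable from the structure of $\overline X$ as an iterated blow-up of a minimal rational surface; concretely $H^0(\overline X,\mathcal{K}_2)\cong K_2(\overline{K})$ (unramified $K_2$), $H^1(\overline X,\mathcal{K}_2)\cong\overline{K}^{\times}\otimes_{\bb Z}\Pic(\overline X)$, and $H^2(\overline X,\mathcal{K}_2)\cong\CH_0(\overline X)\cong\bb Z$.

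The second step is Galois descent from $\overline X$ to $X_K$. I would set up a Hochschild--Serre type descent spectral sequence
$$E_2^{p,q}=H^p\big(\Gal(\overline{K}/K),\,H^q(\overline X,\mathcal{K}_2)\big)\;\Longrightarrow\; H^{p+q}(X_K,\mathcal{K}_2),$$
whose construction and low-degree exactness rest on ``Hilbert's theorem $90$ for $K_2$'' — the Merkurjev--Suslin vanishing of $H^1(\Gal,K_2(L^{\mathrm{sep}}))$ applied along the Gersten complex of $X_K$ — which is precisely the mechanism that the title result of \cite{CT1983} is designed to supply. Filtering $H^2(X_K,\mathcal{K}_2)$ by this spectral sequence, the degree-zero part $A_0(X_K)$ is assembled from subquotients of $E_2^{2,0}=H^2(\Gal(\overline{K}/K),K_2(\overline{K}))$ and $E_2^{1,1}=H^1(\Gal(\overline{K}/K),\overline{K}^{\times}\otimes\Pic(\overline X))$, while $E_2^{0,2}\cong\bb Z$ is exactly the degree and so disappears on passing to $A_0(X_K)$.

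It then remains to check that these two Galois cohomology groups are finite over the local field $K$. The term $H^2(\Gal(\overline{K}/K),K_2(\overline{K}))$ is controlled by the divisibility of $K_2$ of an algebraically closed field: the uniquely divisible part has no positive-degree cohomology, and the torsion part $(\bb Q/\bb Z)(2)$ contributes only a finite group over a $p$-adic field by local Tate duality. The term $H^1(\Gal(\overline{K}/K),\overline{K}^{\times}\otimes\Pic(\overline X))$ is finite because $\Pic(\overline X)$ is a finitely generated Galois module whose action factors through a finite quotient, so a d\'evissage reduces it to finitely many Galois cohomology groups computed by local class field theory and Tate's finiteness theorems over $K$. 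Since subquotients of finite groups are finite, combining these pieces yields that $A_0(X_K)$ is finite.

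The hard part will be the second step: making Galois descent for the Zariski sheaf $\mathcal{K}_2$ rigorous and identifying its graded pieces. The obstacle is precisely that $\mathcal{K}_2$ is not an \'etale sheaf, so the spectral sequence is not formal; it is the Hilbert $90$ theorem for $K_2$ that repairs the low-degree terms, rules out uncontrolled and possibly infinite contributions, and guarantees the clean $E_2$-description above — this is why finiteness of $A_0$ for rational surfaces and Hilbert $90$ for $K_2$ appear together in \cite{CT1983}. A secondary subtlety is ensuring no infinite divisible subgroup of $A_0(X_K)$ escapes the analysis; over a local field this is closed off by the vanishing $A_0(\overline X)=0$ together with the finiteness of $\op{Br}(X_K)/\op{Br}(K)$, which for geometrically rational surfaces (where $\op{Br}(\overline X)=0$) injects into the finite group $H^1(\Gal(\overline{K}/K),\Pic(\overline X))$. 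An equivalent and arguably more transparent route is to run the argument through the Brauer--Manin pairing $A_0(X_K)\times(\op{Br}(X_K)/\op{Br}(K))\to\bb Q/\bb Z$, whose nondegeneracy over a local field, combined with the finiteness of $\op{Br}(X_K)/\op{Br}(K)$, gives the same conclusion.
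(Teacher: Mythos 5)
The paper does not actually prove this statement: Theorem \ref{theorem_CT_surfaces} is quoted verbatim from Colliot-Th\'el\`ene \cite{CT1983}, so the only ``proof'' on offer here is the citation, and your blind outline is in essence a faithful reconstruction of the strategy of that cited reference. The ingredients you list are exactly the ones used there: the Bloch--Quillen identification $\CH_0(X_K)\cong H^2(X_K,\cal K_2)$, Bloch's computation for geometrically rational surfaces that $H^0(\overline{X},\cal K_2)\cong K_2(\overline{K})$, $H^1(\overline{X},\cal K_2)\cong \overline{K}^{\times}\otimes\Pic(\overline{X})$ and $H^2(\overline{X},\cal K_2)\cong \bb Z$, descent from $\overline{X}$ to $X_K$ powered by Hilbert's theorem $90$ for $K_2$, and finiteness over a local field of $H^1(\Gal(\overline{K}/K),\overline{K}^{\times}\otimes\Pic(\overline{X}))$, i.e.\ of $H^1$ of the N\'eron--Severi torus (Tate). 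Two remarks. First, a factual slip: by Suslin's theorem on torsion in $K_2$, the group $K_2(\overline{K})$ is \emph{uniquely divisible} --- the torsion $(\bb Q/\bb Z)(2)$ you invoke is the torsion of $K_3$, not $K_2$ --- so $H^i(\Gal(\overline{K}/K),K_2(\overline{K}))=0$ for all $i\geq 1$; this is harmless and in fact simplifies your argument, since the $E_2^{2,0}$ term vanishes outright rather than merely being finite. Second, as you yourself flag, the ``descent spectral sequence'' for Zariski cohomology of $\cal K_2$ is not a formal object; constructing the exact sequences it is meant to encode, via Hilbert $90$ for $K_2$ applied along the Gersten complexes, is precisely the main technical content of \cite{CT1983}, so your proposal is an accurate plan whose hard work remains exactly where you located it (the same applies to your Brauer--Manin variant: the nondegeneracy of the pairing over a local field is a theorem from the same circle of ideas, not an independent shortcut).
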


From now on let $A$ be a henselian discrete valuation ring with local parameter $\pi$, residue field $k$ and quotient field $K$. Let $X$ be a regular scheme flat and projective over $A$ with separably rationally connected generic fiber $X_K$. Assume that the special fiber $X_k$ (in the following also denoted by $X_1$) of $X$ is a simple normal crossing divisor. We denote the respective inclusions by $j:X_K\to X$ and $i:X_k\to X$ and set $X_n=X\times_A A/\pi^n$. Let $d$ be the relative dimension of $X$ over $\Spec(A)$. As mentioned above, we intend to study Conjecture \ref{conjecture_Kollar_Szabo} for $\CH^d(X_K)$ using different (cycle) class maps out of $\CH^d(X)$. We begin with a few remarks on these maps.
Assuming the Gersten conjecture for Milnor K-theory, there is an isomorphism
$$\CH^d(X)/p^r\cong H^d(X,\hat{\cal K}^M_{d,X}/p^r) $$
which, by composition, allows us to define a restriction map
$$res^d_{/p^r}:\CH^d(X)/p^r\cong H^d(X,\hat{\cal K}^M_{d,X}/p^r) \to  \projlim_n H^{d}(X_k,\hat{\cal K}^M_{d,X_n}/p^r).$$
This is the Zariski side of the story. If $K$ is a $p$-adic local field, the right $p$-adic \'etale motivic theory can be defined by Sato's $p$-adic \'etale Tate twists, which are objects $\cal T_r(n)\in D^b(X_{\et},\bb Z/p^r)$ fitting into a distinguished triangle of the form
$$i_*\nu^{n-1}_{Y,r}[-n-1]\xto{} \cal T_r(n)_X\xto{}\tau_{\leq n}Rj_*\mu_{p^r}^{\otimes n}\xto{}i_*\nu^{n-1}_{Y,r}[-n]$$
(see \cite{Sato2007}).
The Galois symbol map $\hat{\cal K}^M_{d,X}/p^r\to \cal H^d(\cal T_r(d)_X)$ induces
a map $cl_X:H^d(X,\hat{\cal K}^M_{d,X}/p^r)\to H^{2d}(X,\cal T_r(d))$.\footnote{The map $cl_X:\CH^d(X)/p^r\to H^{2d}(X,\cal T_r(d))$ can also be defined without using the Gersten conjecture \cite[Cor. 6.1.4]{Sato2007}.} 
The map $i^*\hat{\cal K}^M_{d,X}/p^r\to i^*\cal H^d(\cal T_r(d)_X)$ factorises through $\K^M_{d,X_n}/p^r$ for $n$ large enough. Via a norm argument one first reduces this statement to the case in which the residue field is large and therefore the improved Milnor K-theory coincides with ordinary Milnor K-theory. This case can be deduced from the short exact sequence $0\to 1+\pi^n\roi_X \to \roi_X^\times \to \roi_{X_n}^\times \to 0$, which induces the unit filtration on Milnor K-theory, noticing that in the Nisnevich topology the group $(1+\pi^n\roi_X)/p^r=0$ for $n$ large enough by Hensel's lemma \cite[II, Lem. 2]{Elkik1973}. 
Indeed, the kernel of the restriction map $i^*\cal K^M_{d,X}\to \cal K^M_{d,X_n}$ is locally generated by elements of the form $\{1+\pi^n a,f_1,\dots,f_{d-1}\}$ with $f_1,\dots,f_{d-1}$ units \cite[Lemma 1.3.1]{KaS86}. In \cite{Lueders2019} we showed that the resulting map $\projlim_n H^{d}(X_k,\K^M_{d,X_n}/p^r)\to H^{2d}(X, \cal T_r(d))$ is an isomorphism in the smooth case. 
In sum, there are commutative diagrams
$$
\xymatrix{
\CH^d(X)/p^r \ar[r]^-{res^d_{/p^r}} \ar@{->>}[d]_{cl_X} & \projlim_n H^{d}(X_k,\K^M_{d,X_n}/p^r) \ar[d]^{cl_{X_k}} & \mathrm{and} & \CH^d(X)/p^r \ar[r]^-{res^d_{/p^r}} \ar@{->>}[d]_{cl_X} & \projlim_n H^{d}(X_k,\K^M_{d,X_n}/p^r)  \ar[dl]^{}  \\
H^{2d}(X, \cal T_r(d)) \ar[r]^-\cong & H^{2d}(X_k,i^*\cal T_r(d)) & & H^{2d}(X, \cal T_r(d)).  &  \\
}
$$
The bottom map on the left is an isomorphism by \'etale proper base change. The map $cl_X$ is surjective by \cite{SS14}. 
Precomposing the vertical maps with maps from $p^r$-torsion subgroups, we get a commutative diagram
\begin{equation}\label{diagram_padic}
\xymatrix{
\CH^d(X)[p^r] \ar@{-->}[r]^-{res^d|_{tor}} \ar[d]_\sigma &  \projlim_n H^{d}(X_k,\K^M_{d,X_n})[p^r]  \ar[d]^{\sigma_k}  \\
\CH^d(X)/p^r \ar[r]^-{res^d_{/p^r}} \ar@{->>}[d]_{cl_X} & \projlim_n H^{d}(X_k,\K^M_{d,X_n}/p^r) \ar[d]^{cl_{X_k}} \\
H^{2d}(X, \cal T_r(d)) \ar[r]^\cong & H^{2d}(X_k,i^*\cal T_r(d)). \\
}
\end{equation}

\begin{remark}\label{remark_KEW}
\begin{enumerate}
\item In \cite[Conj. 10.1]{KerzEsnaultWittenberg2016}, Kerz, Esnault and Wittenberg conjecture that, assuming the Gersten conjecture for Milnor K-theory for the isomorphism on the left, the map $$res^d_{/p^r}:\CH^d(X)/p^r\cong H^{d}(X,\cal K^M_{d,X}/p^r) \to \projlim_n H^{d}(X_k,\K^M_{d,X_n}/p^r)$$ is an isomorphism for any regular flat projective scheme over $A$. In \cite{Lueders2019} we showed that this map is surjective in the smooth case even with integral coefficients. 
\item 
We would like to study the map $res^d|_{tor}$ using the map $res^d_{/p^r}$. For this the map $\sigma$ needs to be injective. Then we can deduce the injectivity of $res^d|_{tor}$ from the conjectural injectivity of $res^d_{/p^r}$. If $\CH^d(X)[p^r]$ is of finite exponent, then $\sigma$ is injective for $r>>0$.
\item If $p\neq ch(k)$, then by Corollary \ref{corollary_App_B} the above diagram becomes the following diagram:
\begin{equation}\label{diagram_ladic}
\xymatrix{
\CH^d(X)[\ell^r] \ar@{-->}[r]^-{res^d|_{tor}} \ar[d]_\sigma &  H^{d}_{}(X_k,\K^M_{d,X_k})[\ell^r]  \ar[d]^{\sigma_k}  \\
\CH^d(X)/\ell^r \ar[r]^-{res^d_{/\ell^r}} \ar[d]_{cl_X} & H^{2d}_{\mathrm{cdh}}(X_k,\bb Z/\ell^r(d)) \ar[d]^{cl_{X_k}} \\
H^{2d}(X,\mu_{\ell^r}^{\otimes d}) \ar[r]^\cong & H^{2d}(X_k,\mu_{\ell^r}^{\otimes d}). \\
}
\end{equation}
For the notation and definition of the cdh-cohomology group on the right see Section \ref{subsection_cdh–motivic}. In this case the map $res^d_{/\ell^r}$ is known to be an isomorphism by the main result of \cite{KerzEsnaultWittenberg2016} assuming $k$ is finite or separably closed.
\end{enumerate}
\end{remark}


Saito and Sato prove the following important theorem concerning the composition $cl_X\circ \sigma= cl_{X,p\mathrm{-tors},r}$.

\begin{theorem}(\cite[Cor. 4.3]{Sato2005},\cite[Thm. 1.5]{SaitoSato2010})\label{theorem_SaSa_rational_surf} Let $A$ be a henselian discrete valuation ring with perfect residue field $k$ and quotient field $K$. Let $X$ be a regular scheme flat and projective over $A$ with generic fiber $X_K$. Let $d$ be the relative dimension of $X$ over $A$.
Assume that $H^2(X_K,\roi_{X_K})=0$. Let $ch(k)=p$, $ch(K)=0$ and assume that the special fiber is a simple normal crossing divisor. Then the map $$cl_{X,p\mathrm{-tors},r}:\CH^2(X)_{p\mathrm{-tors}}\to H^{4}(X,\cal T_r(2))$$ is injective for sufficiently large $r$.
\end{theorem}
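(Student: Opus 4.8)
The plan is to construct the cycle class map through the coniveau spectral sequence attached to the $p$-adic \'etale Tate twist and to reduce injectivity on torsion to the vanishing of a divisible unramified cohomology group governed by the hypothesis $H^2(X_K,\roi_{X_K})=0$; throughout I interpret the twist as $n=2$, so the target is $H^4(X,\cal T_r(2))$. First I would recall the structure of Sato's complex $\cal T_r(2)\in D^b(X_{et},\bb Z/p^r)$: it restricts to $\mu_{p^r}^{\otimes 2}$ on the generic fibre $X_K$, where $p$ is invertible, and is built from the $p$-adic vanishing cycles along the special fibre $X_k$; most importantly it satisfies purity, which is precisely why the simple normal crossing hypothesis on $X_k$ is imposed in the case $p=\mathrm{ch}(k)$. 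Purity produces a coniveau spectral sequence
$$E_2^{p,q}=H^p(X,\cal H^q)\Rightarrow H^{p+q}(X,\cal T_r(2)),$$
where $\cal H^q=\cal H^q(\cal T_r(2))$ denotes the Zariski sheaf associated to $U\mapsto H^q(U,\cal T_r(2))$, together with a Gersten resolution of each $\cal H^q$.

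Next I would locate $\CH^2(X)/p^r$ on the $E_2$-page. The Bloch--Kato--Gabber and Merkurjev--Suslin theorems (in their Tate-twist form) give $\cal H^2(\cal T_r(2))\cong \cal K^M_{2,X}/p^r$, so the Gersten resolution together with the Bloch--Quillen identification $\CH^2(X)\cong H^2(X,\cal K^M_{2,X})$ yields $E_2^{2,2}=H^2(X,\cal H^2)\cong \CH^2(X)/p^r$. The edge map $E_2^{2,2}\twoheadrightarrow E_\infty^{2,2}\hookrightarrow H^4(X,\cal T_r(2))$ is the finite-coefficient cycle class map, and its kernel is the image of the differentials into $E_2^{2,2}$, the leading one being $d_2\colon E_2^{0,3}=H^0(X,\cal H^3)\to E_2^{2,2}$. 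Thus the failure of injectivity with $\bb Z/p^r$-coefficients is measured by the unramified cohomology $H^0(X,\cal H^3(\cal T_r(2)))$.

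To pass from $\bb Z/p^r$-coefficients to the full $p$-primary torsion subgroup, I would compare the Bockstein sequences for $0\to \cal T_r(2)\to \cal T_{r+s}(2)\to \cal T_s(2)\to 0$ with the coefficient sequence $\bb Z/p^r\to \bb Q_p/\bb Z_p$, expressing $\CH^2(X)_{p\text{-tors}}$ through the colimit $H^3(X,\cal T_\infty(2))$ and its divisible part. The torsion cycle class map then becomes injective once $p^r$ annihilates $\CH^2(X)_{p\text{-tors}}$ and exhausts the divisible subgroup of unramified $H^3(X,\bb Q_p/\bb Z_p(2))$. This is exactly where $H^2(X_K,\roi_{X_K})=0$ enters: through $p$-adic Hodge theory, comparing $p$-adic vanishing cycles with coherent cohomology in the manner of Bloch--Kato, this vanishing forces the maximal divisible subgroup of the relevant degree-three cohomology to be trivial, so that $\CH^2(X)_{p\text{-tors}}$ has bounded exponent $p^{r_0}$ and is detected faithfully for every $r\ge r_0$.

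The main obstacle is this coherent-to-\'etale comparison in the wildly ramified case $p=\mathrm{ch}(k)$. One must first establish purity and the Gersten resolution for $\cal T_r(2)$ along the normal crossing special fibre (the input that legitimises the spectral sequence, and the source of the snc hypothesis), and then translate the coherent vanishing $H^2(X_K,\roi_{X_K})=0$ into the triviality of the $p$-divisible part of degree-three cohomology. This $p$-adic Hodge-theoretic control of divisible torsion, rather than the essentially formal spectral-sequence bookkeeping or the Merkurjev--Suslin identification, is the technical heart of the proof.
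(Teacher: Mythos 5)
First, a structural remark: the paper itself contains no proof of this statement --- it is imported verbatim from Sato \cite{Sato2005} (the case $p\neq \mathrm{ch}(k)$) and Saito--Sato \cite{SaitoSato2010} (the case $p=\mathrm{ch}(k)$), so your proposal can only be measured against those proofs. Your outline does capture their overall architecture: reduce injectivity on $p$-primary torsion to the vanishing of the maximal divisible subgroup of a degree-three unramified cohomology group, and deduce that vanishing from $H^2(X_K,\roi_{X_K})=0$ via the $p$-adic regulator and $p$-adic Hodge theory; the Merkurjev--Suslin/Bloch--Kato--Gabber input and the role of the snc hypothesis are also correctly placed, and your reading of the twist as $n=2$ is the intended one.

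The genuine gap is the foundation you build everything on: the coniveau spectral sequence with identified $E_2$-page, $E_2^{p,q}=H^p_{\mathrm{Zar}}(X,\cal H^q)\Rightarrow H^{p+q}(X,\cal T_r(2))$, ``together with a Gersten resolution of each $\cal H^q$''. Identifying the $E_2$-terms as Zariski cohomology of the cohomology sheaves is exactly the Bloch--Ogus theorem, i.e.\ exactness of the Gersten complexes, whose proof rests on geometric presentation lemmas over a field. For a regular scheme that is flat but \emph{not} smooth over a mixed-characteristic discrete valuation ring --- and above all with $p$-torsion coefficients when $p=\mathrm{ch}(k)$ --- this exactness is not a known theorem, and it does not follow from purity for $\cal T_r(2)$ (purity computes the local cohomology groups appearing in the $E_1$-terms; it says nothing about exactness of the resulting complexes). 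The same problem affects your use of $\CH^2(X)\cong H^2(X,\cal K^M_{2,X})$: Bloch--Quillen over $X$ needs Gersten for $K_2$ on $X$, which by Gillet--Levine is available when $X$ is smooth over $A$, but the theorem explicitly covers the non-smooth, snc-reduction case. This is precisely why the cited proofs do not argue this way: Sato constructs the cycle class map for $\cal T_r(n)$ by Gysin/duality methods rather than through a Gersten resolution, and Saito--Sato work with the localization sequences for $X_k \hookrightarrow X \hookleftarrow X_K$ and the Bloch--Kato description of $p$-adic vanishing cycles, reducing the torsion injectivity to the statement $H^3_{\mathrm{ur}}(K(X),\bb Q_p/\bb Z_p(2))_{\mathrm{Div}}=0$, which is then deduced from the hypothesis $H^2(X_K,\roi_{X_K})=0$ by means of the $p$-adic regulator. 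Finally, the step you yourself call the technical heart --- converting the coherent vanishing into triviality of the divisible part via $p$-adic Hodge theory --- is left entirely as a black box, so even granting the spectral-sequence formalism, what you have is a plan whose two load-bearing components (Gersten-type exactness in mixed characteristic, and the regulator/Hodge-theoretic vanishing) are both unproven.
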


\begin{remark}
The condition $H^2(X_K,\roi_{X_K})=0$ is satisfied if $X_K$ is a smooth projective separably rationally connected variety of dimension $2$. In this case, by \cite[Ch. IV, Cor. 3.8]{Kollar1996}, $H^0(X_K,\Omega^m_{X_K})=0$ for all $m>0$ and by Serre duality, $H^0(X_K,\Omega^2_{X_K})=H^2(X_K,\roi_{X_K})$.
\end{remark}

\begin{proposition}\label{proposition_torsion_fe}
Let $K$ be a local field with residue field $k$ and ring of integers $A$. Let $X$ be a regular flat projective scheme over $A$. Assume that the generic fiber $X_K$ of $X$ is rationally connected of dimension $2$, i.e. a rational surface, then the group $\CH^2(X)$ is finitely generated and the group
$\CH^2(X)_{\mathrm{tors}}$ 
is finite.
\end{proposition}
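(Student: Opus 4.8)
The plan is to run the localisation sequence for Chow groups and to reduce finite generation of $\CH^2(X)$ to finite generation of a zero-cycle group on the generic fibre and a divisor group on the special fibre. Since $X$ is regular of dimension $3$ and flat over $A$, with $X_K$ the open complement of the special fibre $X_k$ (a divisor), the right-exact localisation sequence in codimension $2$ reads
$$\CH^1(X_k)\xrightarrow{i_*}\CH^2(X)\xrightarrow{g^*}\CH^2(X_K)\to 0,$$
of the same shape as in Corollary \ref{corollary_main_thm_in_text}(i) with $d=2$ but valid without the smoothness hypothesis. Here $\CH^2(X_K)=\CH_0(X_K)$ is the group of zero-cycles on the generic fibre and $\CH^1(X_k)=\CH_1(X_k)$ is the group of one-cycles (divisors) on the surface $X_k$ modulo rational equivalence. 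Thus $\CH^2(X)$ is an extension of $\CH_0(X_K)$ by a quotient of $\CH_1(X_k)$, and it suffices to prove that both $\CH_0(X_K)$ and $\CH_1(X_k)$ are finitely generated. The finiteness of $\CH^2(X)_{\mathrm{tors}}$ then follows for free, being the torsion subgroup of a finitely generated abelian group.

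For the generic fibre I would apply the theorem of Colliot--Th\'el\`ene (Theorem \ref{theorem_CT_surfaces}): as $X_K$ is a smooth projective rational surface over the local field $K$, the group $A_0(X_K)=\ker[\deg\colon \CH_0(X_K)\to \bb Z]$ is finite. Since $X_K$ is projective and non-empty, the image of the degree map is a non-zero, hence free rank-one, subgroup of $\bb Z$; therefore $\CH_0(X_K)$ is an extension of $\bb Z$ by a finite group and is finitely generated.

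For the special fibre, the residue field $k$ is finite and $X_k$ is projective of dimension $2$ over $k$. Finite generation of $\CH_1(X_k)$ I would establish by reduction to the smooth case: $\CH_1$ is insensitive to nilpotents, so one may take $X_k$ reduced; pushing forward along the finite birational normalisation realises $\CH_1(X_k)$ as generated by the image of $\CH_1$ of the normalisation together with the finitely many one-dimensional components of the non-normal locus, reducing to a normal surface; and for a normal projective surface a resolution $\pi\colon \tilde X_k\to X_k$ (resolution of singularities being available for surfaces over any field) together with the localisation sequence identifies $\CH_1(X_k)$ with the quotient of $\CH_1(\tilde X_k)=\Pic(\tilde X_k)$ by the subgroup generated by the finitely many exceptional curves. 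Finally $\Pic(\tilde X_k)$ is finitely generated because $\tilde X_k$ is smooth projective over the finite field $k$: the N\'eron--Severi group is finitely generated by the theorem of the base, $\Pic^0$ is represented by a finite-type group scheme whose group of $k$-points is finite, and $\mathrm{Br}(k)=0$ identifies $\Pic(\tilde X_k)$ with the $k$-points of the Picard scheme. Hence $\CH_1(X_k)$ is finitely generated, and feeding both inputs into the localisation sequence shows $\CH^2(X)$ is finitely generated.

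I expect the main obstacle to be the singular special fibre: because $X_K$ is allowed to have bad reduction, $X_k$ may be genuinely singular and non-reduced, so one cannot simply invoke finiteness of the Picard group of a smooth surface and must pass through normalisation and resolution as above. As an independent cross-check (and the route suggested by the cycle-class diagrams of this section) one can also bound $\CH^2(X)_{\mathrm{tors}}$ directly: the $p$-primary part, $p=\mathrm{ch}(k)$, injects for $r\gg 0$ into the finite group $H^4(X,\cal T_r(d))$ by the Saito--Sato theorem (Theorem \ref{theorem_SaSa_rational_surf}, whose hypothesis $H^2(X_K,\roi_{X_K})=0$ holds for rational surfaces by the remark following it), while for $\ell\neq p$ the $\ell$-adic cycle class map sends the $\ell$-primary torsion into the finite groups $H^4(X,\mu_{\ell^r}^{\otimes 2})$. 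For finite generation of $\CH^2(X)$ itself, however, the localisation argument above is the cleaner and more complete route, and it yields the finiteness of $\CH^2(X)_{\mathrm{tors}}$ as an immediate consequence.
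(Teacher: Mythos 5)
Your proposal is correct and follows essentially the same route as the paper: the localisation sequence $\CH^{1}(X_k)\to \CH^2(X)\to \CH^2(X_K)\to 0$, Colliot-Th\'el\`ene's theorem (Theorem \ref{theorem_CT_surfaces}) for the generic fibre, and finite generation of the group of one-cycle classes on the special fibre over the finite residue field $k$. Your normalisation/resolution argument for $\CH_1(X_k)$ is a more careful justification of the step the paper compresses into the assertion that the Picard group of each component of $X_k$ is finitely generated (strictly, one needs the Weil divisor class groups, which is exactly what your resolution argument supplies), and the cycle-class cross-check at the end, while valid, is not needed.
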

\begin{proof}
The group $\CH^2(X)$ fits into the exact sequence
$\CH^{1}(X_k)\to \CH^2(X) \to \CH^2(X_K)\to 0.$
By Theorem \ref{theorem_CT_surfaces}, $\CH^2(X_K)$ is the direct sum of $\bb Z$ and a finite group. The group $\CH^{1}(X_k)$ is finitely generated since the Picard group of each component of $X_k$ is finitely generated since $k$ is finite. Therefore $\CH^2(X)_{\mathrm{tors}}$ is finite.
\end{proof}

\begin{theorem}\label{theorem_surfaces}
Let the notation be as in Proposition \ref{proposition_torsion_fe} and $ch(k)=p$ and $ch(K)=0$. We make the following additional assumptions:
\begin{enumerate}
\item The reduced special fiber $X_k$ is a simple normal crossing divisor.
\item The Gersten conjecture holds for the Milnor K-sheaf $\K^M_{2,X}$ and the induced map $ \CH^2(X)\cong H^{2}(X,\K^M_{2,X})  \to \projlim_n H^{2}(X_k,\K^M_{2,X_n})$ is surjective.
\end{enumerate}
Then the map
$$res^2: \CH^2(X) \to \projlim_n H^{2}(X_k,\K^M_{2,X_n})$$
is an isomorphism.
\end{theorem}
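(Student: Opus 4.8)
The plan is to establish injectivity of $res^2$, since its surjectivity is assumed in hypothesis (ii). By Proposition \ref{proposition_torsion_fe} the group $\CH^2(X)$ is finitely generated with finite torsion subgroup, and surjectivity then makes $M:=\projlim_n H^2(X_k,\cal K^M_{2,X_n})$ finitely generated as well. For a surjection of finitely generated abelian groups it is enough to prove that the kernel vanishes, and I would do this in two stages: first show that $res^2$ is injective on torsion, so that $\ker(res^2)$ is torsion-free, and then show that $\ker(res^2)$ has rank zero.

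For the torsion I would argue prime by prime through the commutative diagrams. At $p=\mathrm{ch}(k)$, finiteness of $\CH^2(X)_{\mathrm{tors}}$ makes $\sigma$ injective for $r\gg 0$ (Remark \ref{remark_KEW}(3)), and for such $r$ the composite $cl_X\circ\sigma$ is exactly the Saito--Sato cycle class map $cl_{X,p\mathrm{-tors},r}$, which is injective by Theorem \ref{theorem_SaSa_rational_surf}; its hypotheses hold because $H^2(X_K,\roi_{X_K})=0$ for a rational surface and $X_k$ is a simple normal crossing divisor by assumption (i). Since the bottom arrow of Diagram \ref{diagram_padic} is an isomorphism and the squares commute by assumption (ii), the relation $cl_{X_k}\circ\sigma_k\circ res^2|_{tor}=(\text{bottom iso})\circ cl_X\circ\sigma$ exhibits the left-hand side, hence $res^2|_{tor}$, as injective on $\CH^2(X)[p^r]$. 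The prime-to-$p$ torsion is treated the same way using Diagram \ref{diagram_ladic}, where the bottom comparison is the \'etale cycle class map into $H^4(X,\mu_{\ell^r}^{\otimes 2})$; this map is injective on $\ell$-primary torsion by the classical results underlying Remark \ref{remark_KEW}, so the same chase gives injectivity of $res^2$ on the prime-to-$p$ torsion. Together these show $res^2$ is injective on all of $\CH^2(X)_{\mathrm{tors}}$.

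For the rank I would exploit that the degree factors through $res^2$: the composite of $res^2$ with the degree map on $M$ coincides with $\CH^2(X)\xrightarrow{g^*}\CH^2(X_K)\xrightarrow{\deg}\bb Z$. Thus any $z\in\ker(res^2)$ satisfies $\deg(g^*z)=0$, so $g^*z$ lies in $A_0(X_K)$, which is finite by Theorem \ref{theorem_CT_surfaces}; hence some nonzero multiple $Nz$ lies in the image of $\Pic(X_k)=\CH^1(X_k)$ in the localization sequence $\CH^1(X_k)\to\CH^2(X)\to\CH^2(X_K)\to 0$. On the other hand, $res^2(z)=0$ forces, after chasing the diagrams as above, the vanishing $cl_X(z\bmod\ell^r)=0$ for every prime $\ell$ and every $r$, so $z$ is homologically trivial to all $\ell$-adic orders. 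Combining this with torsion-freeness of $\ker(res^2)$, it then remains to show that a class of infinite order cannot be homologically trivial in this sense.

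This last point is the main obstacle: it amounts to injectivity modulo torsion of the $\ell$-adic cycle class map on $\CH^2(X)$, equivalently to the rank equality $\mathrm{rank}\,\CH^2(X)=\mathrm{rank}\,M$. It is reasonable to expect this for a model whose generic fibre is a rational surface, since then $H^2(X_K)$ is entirely algebraic and $\CH^2(X)_{\bb Q}$ ought to be detected by its images in $\CH^2(X_K)_{\bb Q}=\bb Q$ and in $\CH^1(X_k)_{\bb Q}=\Pic(X_k)_{\bb Q}$. Making this rigorous requires a careful comparison of ranks on both sides of $res^2$ through the localization sequences for $X$ and for the special fibre, together with an analysis of how $\Pic(X_k)$ of the simple normal crossing special fibre contributes to $\projlim_n H^2(X_k,\cal K^M_{2,X_n})$; this is precisely where the simple-normal-crossing hypothesis and the definition of the cohomological special-fibre group enter, and it is the delicate step of the argument.
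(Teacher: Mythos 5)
Your torsion argument is essentially the paper's: injectivity on $p$-torsion from Theorem \ref{theorem_SaSa_rational_surf}, injectivity on prime-to-$p$ torsion from Proposition \ref{proposition_torsion_fe} and the diagrams of Remark \ref{remark_KEW}, hence an injection $\CH^2(X)_{\mathrm{tors}}\hookrightarrow \projlim_n H^{2}(X_k,\cal K^M_{2,X_n})_{\mathrm{tors}}$. The gap is in the second half. You correctly reduce the theorem to showing that the (torsion-free, finitely generated) kernel of $res^2$ has rank zero, but you do not prove it: your proposed route --- a rank count through the localization sequences based on $H^2(X_K)$ being ``entirely algebraic'', equivalently injectivity modulo torsion of the $\ell$-adic cycle class map $cl_X$ --- is conceded by you to be ``the delicate step'', and it is not among the available inputs; integral injectivity of $cl_X$ modulo torsion is a Tate-type statement that neither the paper nor its references supply, which is precisely why the paper does not argue through $cl_X$ at this point.

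The ingredient you are missing is the main theorem of \cite{KerzEsnaultWittenberg2016}, which is unconditional for $\ell\neq p$ (in contrast to the $p$-adic conjecture recalled in Remark \ref{remark_KEW}(ii)): the map $res^2_{/\ell^r}$ is an isomorphism for every $\ell\neq p$ and every $r$. This kills the free part of the kernel. Indeed, if $z\in\ker(res^2)$, then by the compatibility of $res^2$ with $res^2_{/\ell^r}$ (the $\ell$-adic analogue of the commutativity you already assume in hypothesis (ii)), the class of $z$ dies under $res^2_{/\ell^r}$, hence $z\in \ell^r\CH^2(X)$ for all $r$; since $\CH^2(X)$ is finitely generated by Proposition \ref{proposition_torsion_fe}, we have $\bigcap_r \ell^r\CH^2(X)=0$, so $z=0$. (Equivalently: the snake lemma for multiplication by $\ell^r$ on $0\to\ker(res^2)\to\CH^2(X)\to\projlim_n H^{2}(X_k,\cal K^M_{2,X_n})\to 0$ exhibits $\ker(res^2)/\ell^r$ as a quotient of the bounded torsion of the target, impossible for a nonzero free group once $r\gg 0$.) This is exactly how the paper concludes, by combining torsion-injectivity, the surjectivity of assumption (ii), and the Kerz--Esnault--Wittenberg isomorphism; without this step your argument does not close.
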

\begin{proof}
By Theorem \ref{theorem_SaSa_rational_surf} and the  commutativity of Diagram (\ref{diagram_padic}) the map $res^2$ is injective on $p$-torsion. By Proposition \ref{proposition_torsion_fe} and and the commutativity of Diagram (\ref{diagram_ladic}) the map $res^2$ is injective on $\ell$-torsion. Combined we get an injection
$$res^2:\CH^2(X)_{\mathrm{tors}} \hookrightarrow \projlim_n H^{2}(X_k,\K^M_{2,X_n})_{\mathrm{tors}}.$$
Since by Proposition \ref{proposition_torsion_fe} the group $\CH^2(X)$ is finitely generated, the theorem now follows combining the injectivity on the torsion subgroup with assumption (ii) and the fact that the map $res^2_{/\ell^r}$ is an isomorphism for all $\ell\neq p$ by the main theorem of \cite{KerzEsnaultWittenberg2016}.
\end{proof}


\begin{conjecture}\label{conjecutre_main}
Let $A$ be a henselian discrete valuation ring with residue field $k$ and quotient field $K$. Let $X$ be a regular scheme flat and projective over $A$ with separably rationally connected generic fiber $X_K$. Let $d$ be the relative dimension of $X$ over $\Spec(A)$. Assume the Gersten conjecture for Milnor K-theory. Then the restriction map 
$$res^d: CH^{d}(X)\cong H^{d}(X,\K^M_{d,X}) \to \projlim_n H^{d}(X_k,\K^M_{d,X_n})$$
is an isomorphism.
\end{conjecture}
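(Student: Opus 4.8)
The plan is to follow the template established for $d=2$ in Theorem \ref{theorem_surfaces}: verify that $res^d$ is an isomorphism prime-by-prime on torsion, compare against \'etale (resp. syntomic) cohomology through cycle class maps, and then assemble the pieces over all $n$ in the inverse limit. First I would record the structural input on the source group. From the localisation sequence
$$\CH^{d-1}(X_k)\to \CH^d(X)\to \CH^d(X_K)\to 0$$
together with the separable rational connectedness of $X_K$, one expects $\CH^d(X)_{\mathrm{tors}}$ to be finite: the degree-zero part $\CH^d(X_K)_0$ is of finite exponent by Theorem \ref{theorem_finite_exponent} applied to the generic fibre, and conjecturally finite over a local field by Conjecture \ref{conjecture_Kollar_Szabo}, while $\CH^{d-1}(X_k)$ is built from Picard-type groups of the components of $X_k$, hence finitely generated when $k$ is finite. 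This is exactly the role played by Proposition \ref{proposition_torsion_fe} in the surface case, and finiteness of the torsion is what makes the vertical maps $\sigma,\sigma_k$ of Diagram (\ref{diagram_padic}) injective for $r\gg 0$, as noted in Remark \ref{remark_KEW}(iii).

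Second, I would dispose of the prime-to-$p$ part. For $\ell\neq p=\mathrm{ch}(k)$ the main theorem of \cite{KerzEsnaultWittenberg2016} gives that
$$res^d_{/\ell^r}:\CH^d(X)/\ell^r\to \projlim_n H^{d}(X_k,\cal K^M_{d,X_n})/\ell^r$$
is an isomorphism. Feeding this into Diagram (\ref{diagram_ladic}), whose bottom row is an isomorphism by proper base change and whose cycle class maps compare the mod-$\ell^r$ groups with $H^{2d}(-,\mu_{\ell^r}^{\otimes d})$, yields injectivity of $res^d$ on $\ell$-primary torsion; surjectivity of the mod-$\ell^r$ map together with the finiteness from the first step then pins down the $\ell$-primary part completely.

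Third, and this is the crux, comes the $p$-primary part, and it is precisely here that the argument cannot presently be completed, which is why the statement is posed as a conjecture rather than a theorem. Unlike the $\ell$-adic case there is no available isomorphism $res^d_{/p^r}$; its bijectivity is itself conjectured in Remark \ref{remark_KEW}(ii). The natural route is to replace \'etale cohomology by a syntomic theory and to use the $p$-adic cycle class map $cl_X:\CH^d(X)/p^r\to H^{2d}(X,\cal T_r(d))$ together with the proper base change isomorphism $H^{2d}(X,\cal T_r(d))\cong H^{2d}(X_k,\cal S_r(d))$ of Diagram (\ref{diagram_padic}). Injectivity of $res^d$ on $p$-torsion would then follow from injectivity of $cl_X$ on torsion; but the only input of this kind at our disposal, Theorem \ref{theorem_SaSa_rational_surf}, is restricted to codimension two. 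The principal obstacle is therefore a higher-codimensional generalisation of the Saito--Sato injectivity of the $p$-adic cycle class map on torsion, together with the surjectivity of $res^d$ itself, which in the smooth case came from the lifting argument of Proposition \ref{theorem_lifting_well_defined} but which in the present regular, possibly semi-stable, setting must be supplied by a genuinely different construction.

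Finally, granting the three ingredients above, I would assemble the isomorphism: the diagram chases in Diagrams (\ref{diagram_padic}) and (\ref{diagram_ladic}) give injectivity of $res^d$ on all of $\CH^d(X)_{\mathrm{tors}}$, the compatible family of mod-$n$ isomorphisms controls the torsion-free quotient, and passing to the inverse limit over $n$ while combining the $\ell$- and $p$-primary information identifies $\CH^d(X)$ with $\projlim_n H^{d}(X_k,\cal K^M_{d,X_n})$, exactly as in the proof of Theorem \ref{theorem_surfaces}.
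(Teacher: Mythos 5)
The statement you were asked to prove is a conjecture in the paper (Conjecture \ref{conjecutre_main}); the paper offers no proof of it, only the partial evidence of Theorem \ref{theorem_surfaces} in the case $d=2$ over a local field (and, under extra hypotheses, conditionally on its assumption (ii)) and the surjectivity result of \cite{Lueders2019} in the smooth case. Your assessment is therefore the correct one, and your strategy is essentially the paper's own: you reproduce the $d=2$ template --- finiteness of torsion (Proposition \ref{proposition_torsion_fe}), the prime-to-$p$ part via the main theorem of \cite{KerzEsnaultWittenberg2016} and Diagram (\ref{diagram_ladic}), the $p$-part via the Saito--Sato injectivity of Theorem \ref{theorem_SaSa_rational_surf} and Diagram (\ref{diagram_padic}) --- and you correctly name the ingredients whose absence keeps the general statement conjectural: a higher-codimensional analogue of the Saito--Sato injectivity, the Kerz--Esnault--Wittenberg conjecture for $res^d_{/p^r}$ (Remark \ref{remark_KEW}(ii)), and surjectivity of $res^d$ beyond the smooth case.

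One caveat on scope: your first step (finite generation of $\CH^{d-1}(X_k)$ from Picard groups of the components, and finiteness of $\CH^d(X_K)_0$ via Theorem \ref{theorem_finite_exponent} together with Conjecture \ref{conjecture_Kollar_Szabo}) quietly assumes that $k$ is finite, i.e. that $K$ is a local field, whereas Conjecture \ref{conjecutre_main} is stated for an arbitrary henselian discrete valuation ring. So even granting all the missing ingredients you list, your route would establish the conjecture only in the local-field case --- which is also all that the paper's own $d=2$ evidence covers, so this is a limitation of the method rather than an error in your analysis.
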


\begin{remark}
Conjecture \ref{conjecutre_main} should be viewed as an analogue of the conjecture of Kerz, Esnault and Wittenberg described in Remark \ref{remark_KEW}(ii) and as a generalisation of Theorem \ref{theorem_surfaces}. 
If $\CH^d(X_K)$ is $p$-torsion free, then we expect that  $\projlim_n H^{d}(X_k,\K^M_{d,X_n})\cong H^{d}(X_k,\K^M_{d,X_k})\cong \CH_0^{LW}(X_k)$.
\end{remark}


\appendix
\section{The Gersten conjecture}\label{appendix_Gersten_higher_chow}
In this appendix we recall a few facts about the Gersten conjecture which we need in the article.

\paragraph{The Gersten conjecture for Milnor K-theory.}
Let $X$ be a regular scheme. Let $\hat{K}^M_n$ denote the improved Milnor K-theory defined in \cite{Kerz2010} and $\hat{\mathcal{K}}^M_{n,X}$ its sheafification. $\hat{K}^M_n$ coincides with usual Milnor K-theory for fields and local rings with big residue fields. The Gersten conjecture for Milnor K-theory says that the sequence of sheaves
$$0\to \hat{\mathcal{K}}^M_{n,X}\xrightarrow{i} \bigoplus_{x\in X^{(0)}}i_{x*}K^M_n(x)\rightarrow \bigoplus_{x\in X^{(1)}}i_{x*}K^M_{n-1}(x)\to\dots \to\bigoplus_{x\in X^{(d)}}i_{x*}K^M_{n-d}(x)\to 0$$
is exact. 
\begin{proposition}\label{Gersten_KM}
The Gersten conjecture for Milnor K-theory holds in the following cases:
\begin{enumerate}
\item If $X$ is a regular scheme over a field.
\item If $X=\Spec(R)$ for $R$ a henselian discrete valuation ring with finite residue field.
\end{enumerate}
\end{proposition}
\begin{proof}
(i) is shown in \cite{Kerz2010}, (ii) is shown in \cite{Da18} and \cite[Thm. 5.1]{LuedersGerstenKM}.
\end{proof}

\paragraph{The Gersten conjecture for higher Chow groups for DVRs.} Let $R$ be a discrete valuation ring with residue field $k$ and fraction field $K$. The Gersten conjecture for higher Chow groups in this case says that the localisation exact sequence
$$\dots\to \CH^r(K,q+1)\to \CH^{r-1}(k,q)\to \CH^r(R,q)\to \CH^r(K,q)\to \CH^{r-1}(k,q-1)\to\dots,$$
splits into the pieces
$$0\to \CH^r(R,q)\to \CH^r(K,q)\to \CH^{r-1}(k,q-1)\to 0.$$ 

\begin{proposition}\label{Gersten_higherChow}
Let $r\geq 0$. Assume that either
\begin{enumerate}
\item $R$ is a discrete valuation ring containing a field, or
\item  $R$ is a henselian discrete valuation ring with finite residue field and $r\leq 3$. 
\end{enumerate}
Then there is an isomorphism
$$s:\hat K^M_r(R)\to \CH^{r}(R,r)$$
and the Gersten conjecture holds for the higher Chow groups of $R$ in all degrees in the case of (i) and in the Milnor range $(r,r)$ for $r\leq 3$ in the case of (ii).
\end{proposition}
\begin{proof}
(i) If $R$ contains a field, then the Gersten conjecture is known to hold by \cite[Sec. 10]{Bl86} and Panin's trick \cite{Pa03}.

(ii) We have that 
$$\CH^{r-1}(k,r)\cong H^{r-2.r-1}(k,\bb Z)=0$$
(for the isomorphism on the left of higher Chow groups with motivic cohomology see for example \cite[Thm. 19.1]{MVW06}) for $r\leq 2$. Indeed, this is so by definition for $r=0$. For $r=1$ this is implied for example by the Beilinson-Soul\'e vanishing conjecture which holds for finite fields (see \cite[Ch. VI, Ex. 4.6, p. 535]{Weibel2013}). For $r=2$ this can be deduced from the motivic-to-K-theory spectral sequence
$$E_2^{p,q}=\CH^{-q}(k,-p-q)\Rightarrow K_{-p-q}(k)$$
and the fact that $K^M_2(k)=K_2(k)=0$. 
We conclude that under the assumptions of the proposition, there is a commutative diagram with exact sequences and vertical isomorphisms
$$\xymatrix{
0 \ar[r]^{} & \CH^{r}(R,r) \ar[r]  & \CH^{r}(K,r)  \ar[r]   & \CH^{r-1}(k,r-1)   \ar[r]^{}  & 0 \\
0 \ar[r]^{} & \hat K^M_r(R) \ar[r]^{}\ar[u]^{\cong}  & K^M_r(K) \ar[r]^{}\ar[u]^{\cong}   & K^M_{r-1}(k)  \ar[r]^{} \ar[u]^{\cong}   & 0 .   \
}$$
Indeed, $s$ is always an isomorphism for fields by \cite{To92} and \cite{NS89}. The lower horizontal sequence is exact by Proposition \ref{Gersten_KM}(ii) and therefore $s$ is always injective. By the above remarks the upper horizontal sequence is exact for $r\leq 2$. For $r=3$ the motivic-to-K-theory spectral sequence implies that $\CH^2(k,3)\cong K_3(k)$ is finite by Quillen's calculation of the K-groups of finite fields. The result then follows from the fact that $s$ is an isomorphism with finite coefficients.
\end{proof}


\section{Cohomological Chow groups of zero-cycles}\label{section_coh_chow}
Let $X$ be a scheme of finite type over a field $k$. The G-theory $G_n$ of $X$, defined using the abelian category of coherent sheaves on $X$, is related to Bloch's higher Chow groups via a version of the Grothendieck-Riemann-Roch theorem for singular schemes; this theorem is due to Baum-Fulton-MacPherson \cite{Baum1975} and Bloch \cite{Bl86} and says that there is a sequence of isomorphisms
$$\tau:\bigoplus_i\CH^i(X,n)_{\bb Q}\xto{\cong}\bigoplus_i \gr^i G_n(X)_{\bb Q}\xto{\cong} G_n(X)_\bb Q. $$
This is considered to be the homological side of the story since Bloch's higher Chow groups form a Borel-Moore homology theory. 

On the cohomological side there are two versions of K-theory. Algebraic K-theory, defined by Quillen using the category of locally free sheaves on $X$, and $KH$-theory, a homotopy invariant version of algebraic K-theory, defined by Weibel. Algebraic K-theory should be related to a cohomological version of Bloch's higher Chow groups. In the case of zero-cycles Levine and Weibel \cite{LeWe85} have defined a version of the Chow group of zero-cycles on $X$ in terms of the smooth points and certain good curves on $X$, which is related to $K_0(X)$ (for a precise definition see Definition \ref{definition_Chow_group_LW}). 

Finally, $KH$-theory is related to cdh-motivic cohomology. If $A$ is a ring and $\ell\in A^\times$, then there is an isomorphism $K_n(A,\bb Z/\ell\bb Z)\cong KH_n(A,\bb Z/\ell\bb Z)$ (see \cite[Prop. 1.6]{Weibel1989}). This indicates that with finite coefficients cohomological Chow groups should be closely related to cdh-motivic cohomology groups.

In Section \ref{subsection_curve_case} and \ref{subsection_arb_dim} we define cohomological versions of the higher zero-cycles $\CH^{d+j}(X,j)$ which we have encountered in Section \ref{section_Filtrations on higher zero-cycles}. This extends the definition of Levine and Weibel for zero-cycles and is related to higher K-theory. In Section \ref{subsection_cdh–motivic} we relate these groups to cdh-motivic cohomology. For more results of this nature we refer the reader to \cite{BindaKrishna2022}. 

\subsection{The case of curves}\label{subsection_curve_case}
In this and the next subsection we assume for simplicity that all residue fields are large in order to be able to work with ordinary Milnor K-theory. In the following let $k$ be a field and $X$ be an equidiensional quasi-projective $k$-scheme.
Let $X_{\rm sm}$ denote the disjoint union of the smooth loci of the   
$d$-dimensional irreducible components of $X$. Let $X_{\rm sing}$ denote the singular locus of $X$. 
A smooth closed point of $X$ will mean a closed point lying in  
$X_{\rm sm}$.  
Let $Y \subsetneq X$ be a closed subset not containing any $d$-dimensional  
component of $X$ and such that $X_{\rm sing} \subseteq Y$.   
Let $Z_0(X,Y)$ be the free abelian group on closed points of $X \setminus Y$.  
We shall sometimes write $Z_0(X,X_{\rm sing})$ as $Z_0(X)$.
More generally, we make the following definition:
\begin{definition}
$$Z_0(X,X_{\rm sing},j):=\bigoplus_{x\in Z_0(X,X_{\rm sing})}K^M_j(x).$$
\end{definition}

Let $C$ be a reduced scheme of pure dimension $1$ over $k$ and $\{\eta_1, \cdots , \eta_r\}$ be the set of generic points of $C$.   
Let $\cal O_{C,Z}$ denote the semi-local ring of $C$ at   
$S = Z \cup \{\eta_1, \cdots , \eta_r\}$.  
Let $k(C)$ denote the ring of total  
quotients of $C$ and write $\cal O_{C,Z}^\times$ for the group of units in   
$\cal O_{C,Z}$. Notice that $\cal O_{C,Z}$ coincides with $k(C)$   
if $|Z| = \emptyset$.   
Since $C$ is reduced, it is Cohen-Macaulay and therefore $\cal O_{C,Z}^\times$  is the subgroup of group of units in the ring of total quotients $k(C)^\times$   
consisting of those $f$ which are regular and invertible in the local rings $\cal O_{C,x}$ for every $x\in Z$. 
\begin{definition}
We let $$K^M_j(\cal O_{C,Z}):=(\cal O_{C,Z}^\times)^{\otimes j}/<a_1\otimes\dots\otimes a_j\mid a_i+a_{i'}=1 \;\rm for\; some\; 1\leq i<i'\leq j>.$$
\end{definition}
By definition, if $\alpha\in K^M_{j+1}(\cal O_{C,Z})$, then $\partial(\alpha)\in Z_0(C,C_{\rm sing},j).$
\begin{definition}
We let $R_0(C,j)$ be the subgroup of $Z_0(C,C_{\rm sing},j)$ generated by all $\partial(\alpha)$ for $\alpha\in K^M_{j+1}(\cal O_{C,Z})$.
\end{definition}

\begin{proposition}\label{Bloch_formula_dim_1}
Let $X$ be of dimension $1$.
Then $$H^1(X,\cal K^M_{j+1})\cong Z_0(X,X_{\rm sing},j)/R_0(C,j).$$
\end{proposition}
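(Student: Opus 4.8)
The plan is to compute $H^1(X,\cal K^M_{j+1})$ by means of an explicit flasque resolution, and then to identify the resulting cokernel with $Z_0(X,X_{\rm sing},j)/R_0(C,j)$ through a local analysis at the singular points. Write $C=X$ for the (reduced) curve, let $\eta$ denote its generic point(s), $j:\Spec k(C)\into C$ the inclusion of the generic point(s), and $\Sigma=C_{\rm sing}$ the finite set of singular points. First I would consider the natural map of Zariski sheaves $\cal K^M_{j+1}\to j_*K^M_{j+1}(k(C))$ into the constant sheaf supported at $\eta$, which is flasque. Since the stalk of $\cal K^M_{j+1}$ at a closed point $x$ is $K^M_{j+1}(\roi_{C,x})$, the cokernel $\cal Q$ is concentrated on closed points, with stalk $\cal Q_x=K^M_{j+1}(k(C))/K^M_{j+1}(\roi_{C,x})$; being a direct sum of skyscrapers it is again flasque. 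The kernel of the map, supported on closed points, is flasque and hence acyclic, so regardless of injectivity questions we obtain a flasque resolution and
\[
H^1(C,\cal K^M_{j+1})\cong \mathrm{coker}\Big(K^M_{j+1}(k(C))\xrightarrow{\ \phi\ }\bigoplus_{x}\cal Q_x\Big),
\]
the sum running over closed points $x$ of $C$.

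Next I would separate the smooth and singular contributions, writing $\phi=(\phi_{\rm sm},\phi_\Sigma)$ with target $B\oplus E$, where $B=\bigoplus_{x\notin\Sigma}\cal Q_x$ and $E=\bigoplus_{x\in\Sigma}\cal Q_x$. At a smooth closed point the Gersten sequence $0\to K^M_{j+1}(\roi_{C,x})\to K^M_{j+1}(k(C))\xrightarrow{\partial} K^M_j(k(x))\to 0$ for the discrete valuation ring $\roi_{C,x}$ identifies $\cal Q_x$ with $K^M_j(x)$ and $\phi_{\rm sm}$ with the tame symbol, so $B\cong Z_0(X,X_{\rm sing},j)$ and $\phi_{\rm sm}$ is the boundary map. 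Granting the two local facts below, a diagram chase on the split exact sequence $0\to B\to B\oplus E\to E\to 0$ then finishes the argument: since $\phi_\Sigma$ is surjective, $\mathrm{im}(\phi)$ surjects onto $E$, and $\mathrm{im}(\phi)\cap B=\phi_{\rm sm}(\ker\phi_\Sigma)=\partial\big(K^M_{j+1}(\roi_{C,\Sigma})\big)=R_0(C,j)$. Consequently
\[
\mathrm{coker}(\phi)\cong B/\big(\mathrm{im}(\phi)\cap B\big)=Z_0(X,X_{\rm sing},j)/R_0(C,j),
\]
which is the desired isomorphism.

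The main obstacle is precisely the local analysis at $\Sigma$, since there the rings $\roi_{C,x}$ are one-dimensional but not regular, so the Gersten/purity input used at the smooth points is unavailable. Concretely I must establish two statements about the Milnor K-theory of reduced one-dimensional semilocal rings: (i) a weak-approximation statement, namely surjectivity of $\phi_\Sigma:K^M_{j+1}(k(C))\to\bigoplus_{x\in\Sigma}K^M_{j+1}(k(C))/K^M_{j+1}(\roi_{C,x})$, allowing one to prescribe the class at each singular point independently; and (ii) a patching statement identifying $\ker\phi_\Sigma=\bigcap_{x\in\Sigma}\mathrm{im}\,K^M_{j+1}(\roi_{C,x})$ with $\mathrm{im}\,K^M_{j+1}(\roi_{C,\Sigma})$, i.e. that a symbol regular at each singular point separately already lies in the semilocal Milnor K-group.

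I would deduce (i) and (ii) from the multiplicative structure of Milnor K-theory together with the Chinese remainder theorem applied to $\roi_{C,\Sigma}$: a symbol is a product of generators $\{f_1,\dots,f_{j+1}\}$, and using the description of $\roi_{C,\Sigma}^\times$ recorded before the statement (valid because $C$ is reduced, hence Cohen--Macaulay) one approximates each $f_i$ by units congruent to prescribed values at a chosen point and to $1$ at the others, exactly as in the reciprocity argument of Lemma \ref{lemma_moving}. This reduces both assertions to the $K^M_1$ case, where they are immediate from the stated description of the semilocal units, and then lifts to all $j$ by the symbolic presentation of $K^M_{j+1}$. This is the one place where reducedness is genuinely used, and verifying these two semilocal properties is the technical heart of the proof.
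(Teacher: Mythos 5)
Your overall route is the paper's route in different packaging: the flasque-resolution computation of $H^1(X,\cal K^M_{j+1})$ as $\mathrm{coker}\bigl(K^M_{j+1}(k(C))\to\bigoplus_x\cal Q_x\bigr)$ is exactly what the paper extracts from the local-to-global (coniveau) spectral sequence, your identification $B\cong Z_0(X,X_{\rm sing},j)$ at smooth points is the paper's purity step, and your lemmas (i) and (ii) are precisely the two points the paper settles by citing a moving lemma (\cite[Lem. 9.1.9]{Li02}) and by its description of the kernel. (Minor imprecision: the stalk of $j_*K^M_{j+1}(k(C))$ at $x$ is $\prod_{i:\,x\in C_i}K^M_{j+1}(k(C_i))$, involving only the branches through $x$, not all of $K^M_{j+1}(k(C))$; this is harmless.) So the skeleton of your proof is sound, and you have correctly isolated where the real work lies.

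The genuine gap is in your proposed proof of (i) and (ii). Entry-wise CRT approximation of the $f_i$ does not ``reduce both assertions to the $K^M_1$ case'': if you replace one entry $f_1$ by $f_1u$ with $u$ a unit at $x$ (even $u\equiv 1$ modulo a high power of the maximal ideal at the other points), the symbol changes by $\{u,f_2,\ldots,f_{j+1}\}$, whose remaining entries are still arbitrary rational functions, and such a term need not lie in the image of $K^M_{j+1}(\roi_{C,x})$: already at a smooth point, $\{u,\pi\}$ has tame symbol $-\{\bar u\}$, which is nonzero whenever $\bar u\neq 1$. So controlling the entries of a symbol separately does not control its class in $\cal Q_x$, and weak approximation for units does not formally lift to higher symbols. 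The mechanism that does work --- and is what Lemma \ref{lemma_moving} of the paper actually uses --- operates on whole symbols: lift the prescribed class to $\tilde\alpha$ in $K^M_j$ of the function field, multiply by a single auxiliary function $f$ chosen by CRT with $f\equiv 1$ (congruent to $1$, not merely a unit) at the points to be avoided, and use the derivation property $\partial_x(\{f\}\cdot\tilde\alpha)=-\{\bar f\}\cdot\partial_x(\tilde\alpha)=0$ there; this proves your surjectivity (i). Your patching statement (ii), identifying $\ker\phi_\Sigma$ with the image of $K^M_{j+1}(\roi_{C,\Sigma})$ for the non-regular semilocal ring $\roi_{C,\Sigma}$, is trivial for $K^M_1$ but does not follow from the $K^M_1$ case by ``the symbolic presentation'' for the same interference reason; it requires a genuine argument about Milnor K-theory of singular one-dimensional semilocal rings (the paper is itself terse here, asserting the kernel description and leaving its identification with $R_0(C,j)$ implicit, but it does not commit to a mechanism that fails). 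As written, your proposal establishes the reduction but not the two lemmas carrying the content.
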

\begin{proof}
We consider the local to global spectral sequence
$$E_1^{p,q}(X,\cal K^M_{j+1})=\bigoplus_{x\in X^{(p)}}H_x^{p+q}(X,\cal K^M_{j+1})\Longrightarrow H^{p+q}(X,\cal K^M_{j+1}).$$
Since $H^1_y(X,\cal K^M_{j+1})=H^1(k(y),\cal K^M_{j+1})=0$ for $y\in X^{(0)}$, the above spectral sequence implies that
$$H^1(X,\cal K^M_{j+1})\cong\mathrm{coker}(\partial:\bigoplus_{y\in X^{(0)}}H_y^{0}(X,\cal K^M_{j+1})\to \bigoplus_{x\in X^{(1)}}H_x^{1}(X,\cal K^M_{j+1})).$$
Purity for Milnor K-theory (see \cite[Lem. 3.4]{Lueders2019}) implies that if $y\in X^{(0)}_{}$, then $H_y^{0}(X,\cal K^M_{j+1})\cong K^M_{j+1}(y)$ and if $x\in X^{(1)}_{\rm reg}$, then $H_x^{1}(X,\cal K^M_{j+1}))\cong K^M_j(x).$ Let $x\in X^{(1)}_{\rm sing}$ and $\frak m$ be the maximal ideal of $\roi_{X,x}$. Then the exact sequence 
$$H^0(\roi_{X,x},\cal K^M_{j+1})\to H^0(\roi_{X,x}-\frak m,\cal K^M_{j+1})\to H^1_x(\roi_{X,x},\cal K^M_{j+1})\to H^1(\roi_{X,x},\cal K^M_{j+1})=0$$
implies that 
\begin{equation}\label{equation_local_coh_Milnor_K}
 H_x^{1}(X,\cal K^M_{j+1}))\cong H^0(\roi_{X,x}-\frak m,\cal K^M_{j+1})/H^0(\roi_{X,x},\cal K^M_{j+1}).
\end{equation} 
Therefore the map $\bigoplus_{y:x\in \overline{\{y\}}}H_y^{0}(X,\cal K^M_{j+1})\cong K^M_{j+1}(y)\to H_x^{1}(X,\cal K^M_{j+1})$ is surjective. Furthermore, using a moving lemma \cite[Lem. 9.1.9]{Li02}, one can show that the map $$\bigoplus_{y\in X^{(0)}}H_y^{0}(X,\cal K^M_{j+1})\to \bigoplus_{x\in X_{\rm sing}^{(1)}}H_x^{1}(X,\cal K^M_{j+1})$$
is surjective and therefore $H^1(X,\cal K^M_{j+1})$ is generated by $\bigoplus_{x\in Z_0(X,X_{\rm sing})}K^M_j(x).$ Finally, (\ref{equation_local_coh_Milnor_K}) implies that the kernel of the map 
$$\bigoplus_{x\in Z_0(X,X_{\rm sing})}K^M_j(x)\to H^1(X,\cal K^M_{j+1})$$
is given by all elements of $\bigoplus_{y\in X^{(0)}}H_y^{0}(X,\cal K^M_{j+1})$ which restrict to $H^0(\roi_{X,x},\cal K^M_{j+1})$ in $H_x^{1}(X,\cal K^M_{j+1})$ for $x\in X^{(1)}_{\rm sing}$. 
\end{proof}

\begin{example}[Cusp]
Let $R$ be a ring and $I\subset R$ be an ideal. Let $f:R\to S$ be a ring homomorphism that maps $I$ isomorphically onto an ideal of $S$, which we also denote by $I$. In this case $R$ is the pullback of $S$ and $R/I$ and the pullback square
$$\xymatrix{
R \ar[r]^{f} \ar[d]^{} & S \ar[d]^{} \\
R/I \ar[r]^{\bar{f}}  & S/I,    \
}$$
is called a Milnor square. We consider this square for the cusp $R=k[x,y]/(x^3=y^2),  S=k[t], I=t^2S, R/I=k$ and $S/I=k[t]/t^2$ and $f$ maps $x$ to $t^2$ and $y$ to $t^3$. Let $P$ be the origin. Then, using the isomorphisms $$H^1_P(\Spec(R),\cal K^M_{j+1})\cong H^0(\Spec(R_P)-P,\cal K^M_{j+1})/H^0(\Spec(R_P),\cal K^M_{j+1})$$ and $$H^1_P(\Spec(S),\cal K^M_{j+1})\cong H^0(\Spec(S_P)-P,\cal K^M_{j+1})/H^0(\Spec(S_P),\cal K^M_{j+1}),$$ there is a short exact sequence
$$0\to H^0(\Spec(S_P),\cal K^M_{j+1})/H^0(\Spec(R_P),\cal K^M_{j+1})\to H^1_x(\Spec(R),\cal K^M_{j+1})\to H^1_x(\Spec(S),\cal K^M_{j+1})\to 0.$$ 
We denote the term on the left by $\Omega^j$. For $j=0$ it is isomorphic to $k$. We get a short exact sequences of two-term complexes
$$\xymatrix{
& 0\ar[r] \ar[d] &  K^M_{j+1}(k(t)) \ar[r] \ar[d] &    K^M_{j+1}(k(t))  \ar[d]^{}  \ar[r] & 0 \\
 0 \ar[r] &  \Omega^j \ar[r]  & \bigoplus_{x\in \Spec(R)_{(1)}}H^1_x(\Spec(R),\cal K^M_{j+1})\ar[r]  &   \bigoplus_{x\in \Spec(S)_{(1)}}H^1_x(\Spec(S),\cal K^M_{j+1})  \ar[r] & 0.  \
}$$
This induces a long exact sequence 
$$A_1(\Spec(R),j)\xto{\cong} A_1(\Spec(S),j)\to \Omega\xto{} H^1(\Spec(R),K^M_{j+1})\to H^1(\Spec(S),K^M_{j+1})=0,$$
in which the first map is an isomorphism by the definition of the tame symbol and the last group is zero by homotopy invariance. Therefore $\Omega{\cong} H^1(\Spec(R),K^M_{j+1})$ which for $j=0$ recovers the well known isomorphism $k\cong \Pic(\Spec(R))$ (see \cite[Ch. I., Ex. 3.10.1]{Weibel2013}).
\end{example}

\subsection{In arbitrary dimension}\label{subsection_arb_dim}
We keep the notation of the beginning of Section \ref{subsection_curve_case}. We begin by recalling the definition of the Binda-Krishna Chow group of zero-cycles (or short Binda-Krishna Chow group) from \cite[Sec. 3]{Binda2018}.  

\begin{definition}\label{definition_good_curve}  
	Let $C$ be a reduced scheme which is of pure dimension one over $k$.  
	We shall say that a pair $(C, Z)$ is \emph{a good curve  
		relative to $X$} if there exists a finite morphism $\nu\colon C \to X$  
	and a  closed proper subscheme $Z \subsetneq C$ such that the following hold.  
	\begin{enumerate}  
		\item  
		No component of $C$ is contained in $Z$.  
		\item  
		$\nu^{-1}(X_{\rm sing}) \cup C_{\rm sing}\subseteq Z$.  
		\item  
		$\nu$ is local complete intersection at every   
		point $x \in C$ such that $\nu(x) \in X_{\rm sing}$.   
	\end{enumerate}  
\end{definition}

Let $(C, Z)$ be a good curve relative to $X$. 
Since $\nu$ is proper and maps $Z_0(C,Z)$ to $Z_0(X,X_{\rm sing})$, there exists a   
push-forward map $$\nu_{*}:Z_0(C,Z,j)\xrightarrow{} Z_0(X,X_{\rm sing},j).$$  

\begin{definition}(Binda-Krishna Chow group)  
Let $R_0(C, Z, X,j)$ be the subgroup  
of $Z_0(X,X_{\rm sing},j)$ generated by the set   
$\{\nu_*(\partial(\alpha))| \alpha \in K^M_{j+1}(\cal O^{}_{C, Z})\}$.   
Let $R_0(X,j)$ denote the subgroup of $Z_0(X,X_{\rm sing},j)$ generated by   
the image of the map $R_0(C, Z, X,j) \to Z_0(X,X_{\rm sing},j)$, where  
$(C, Z)$ runs through all good curves relative to $X$. 

The quotient group ${Z_0(X,X_{\rm sing},j)}/{R^{}_0(X,j)}$ is  
denoted by $\CH^{BK}_0(X,j)$ and called the Binda-Krishna Chow group. 
\end{definition}

We can now define the Levine-Weibel Chow group.
\begin{definition}(Levine-Weibel Chow group)\label{definition_Chow_group_LW} 
Let $R^{LW}_0(X,j)$ denote the subgroup of $Z_0(X,X_{\rm sing},j)$ generated  
by the divisors of rational functions on good curves $(C,Z)$ relative to $X$ but assume additionally that the map $\nu: C \to X$ is a closed immersion. Such curves on $X$ are called {\sl Cartier curves}.   
The quotient group ${Z_0(X,X_{\rm sing},j)}/{R^{LW}_0(X,j)}$ is  
denoted by $\CH^{LW}_0(X,j)$ and called the Levine-Weibel Chow group. We often write $\CH^{LW}_0(X)$ for $\CH^{LW}_0(X,0)$
\end{definition}

There is a canonical surjection  
$\CH^{LW}_0(X) \to \CH^{BK}_0(X)$ (see \cite[Lem. 3.10]{Binda2018}). The definition of the Levine-Weibel Chow group \cite{LeWe85} predates the definition of the Binda-Krishna Chow group \cite{Binda2018}.  The Binda-Krishna Chow group is more flexible and easier to handle since it often allows to work with curves which are regularly embedded in $X$ (see \cite[Lem. 3.5]{Binda2018}).
It is not known if the Levine-Weibel and Binda-Krishna Chow group coincide in general, unless 
$X$ is a reduced quasi-projective scheme of dimension $d\ge 1$ over 
	an algebraically closed field $k$ and either (i)
		$d \le 2$, (ii)
		$X$ is affine, or (iii)
		${\rm char}(k) = 0$ and $X$ is projective (see \cite[Thm. 3.17]{Binda2018}). Similar questions may be asked for $j\geq 1$.

\begin{lemma}\label{lemma_BK_LW_dim1_pic}\cite[Prop. 1.4]{LeWe85}
Suppose that $X$ is reduced and purely $1$-dimensional. Then there is a canonical
isomorphism $$\CH_0(X,Y) \cong \CH^{LW}_0(X,Y) \cong \Pic(X).$$
\end{lemma}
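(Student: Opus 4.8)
The plan is to prove the three-part isomorphism for a reduced, purely one-dimensional quasi-projective $k$-scheme $X$ by identifying all three groups with the Picard group. The statement to establish is
$$\CH_0(X,Y) \cong \CH^{LW}_0(X,Y) \cong \Pic(X),$$
where $Y$ is a suitable closed subset containing $X_{\rm sing}$ (with no one-dimensional component contained in $Y$), so that $Z_0(X,Y)$ is the free abelian group on the smooth closed points away from $Y$. Since we work in relative dimension one, the "good curves" and "Cartier curves" entering the definitions of the relative Chow groups are simply (unions of) components of $X$ itself, so the combinatorial flexibility that distinguishes the Binda--Krishna and Levine--Weibel constructions in higher dimension collapses; this is what makes the result clean in this case.

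First I would construct the map to $\Pic(X)$. Given a smooth closed point $x \in X \setminus Y$, the local ring $\roi_{X,x}$ is regular of dimension one, so $x$ is a Cartier divisor and determines a line bundle $\roi_X(x)$; extending linearly gives a homomorphism $Z_0(X,Y) \to \Pic(X)$. The key point is surjectivity: any line bundle on the reduced curve $X$ can be represented by a Cartier divisor supported on the smooth locus away from $Y$, using that $X_{\rm sm}\setminus Y$ is dense in each component and a prime-avoidance/moving argument analogous to \cite[Lem. 9.1.9]{Li02} already invoked in Proposition \ref{Bloch_formula_dim_1}. Next I would check that this surjection kills the subgroup of relations. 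A rational function $f$ on a Cartier curve $(C,Z)$ relative to $X$ — which here means $f \in \roi_{C,Z}^\times$ regular and invertible at the points of $Z$ — has $\nu_*(\operatorname{div}(f))$ equal to a principal Cartier divisor on $X$, hence trivial in $\Pic(X)$; this shows the map factors through $\CH^{LW}_0(X,Y)$. The same computation, run over arbitrary good curves rather than only Cartier curves, factors it through $\CH_0(X,Y)\cong \CH^{BK}_0(X,Y)$.

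For injectivity I would argue that a cycle mapping to the trivial line bundle is a principal divisor $\operatorname{div}(f)$ for some $f$ in the ring of total quotients $k(X)^\times$, and then verify that $f$ can be chosen to be a unit in the semilocal ring at $Z = X_{\rm sing}$, so that it realizes a relation in both $R_0$ and $R^{LW}_0$. This uses that $X$ is reduced (hence Cohen--Macaulay in dimension one, as noted before Definition) so that $\roi_{X,Z}^\times$ is exactly the subgroup of $k(X)^\times$ of elements regular and invertible at each $x\in Z$. Finally, the surjection $\CH^{LW}_0(X) \to \CH^{BK}_0(X)$ from \cite[Lem. 3.10]{Binda2018} together with the two factorizations forces all comparison maps to be isomorphisms by a short diagram chase.

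The main obstacle I anticipate is the surjectivity onto $\Pic(X)$ together with the injectivity: both rest on being able to represent an arbitrary line bundle by a Cartier divisor supported on $X_{\rm sm}\setminus Y$ and, conversely, on showing that a function realizing a principal divisor can be taken to lie in $\roi_{X,Z}^\times$. These are exactly the places where reducedness and the one-dimensionality are essential, and where one must be careful on a non-irreducible curve that $k(X)$ is a product of fields rather than a single field. Since this is precisely the content of the cited \cite[Prop. 1.4]{LeWe85}, I would expect the proof to amount to transcribing that argument and noting that the passage from $\CH^{LW}_0$ to $\CH^{BK}_0 \cong \CH_0(X,Y)$ is automatic in dimension one.
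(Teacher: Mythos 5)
First, a point of comparison: the paper gives \emph{no} proof of this lemma at all --- it is quoted from \cite[Prop. 1.4]{LeWe85} --- so your reconstruction can only be measured against that cited argument and the definitions of Appendix~\ref{section_coh_chow}. Your treatment of the Levine--Weibel half is essentially the cited argument and is sound: in a reduced, purely one-dimensional $X$ every Cartier curve (a closed immersion of a reduced pure one-dimensional scheme) \emph{is} a union of irreducible components of $X$, so a relation $\mathrm{div}(f)$ with $f\in\roi_{C,Z}^{\times}$ extends by $1$ on the remaining components (the points where $C$ meets them lie in $X_{\rm sing}\subseteq Z$, where $f$ is a unit) to a global invertible meromorphic function on $X$, hence dies in $\Pic(X)$; surjectivity is the moving argument (where, over a finite field, the ``general section'' step needs the extra care taken in \cite{LeWe85}, since a vector space over $\bb F_q$ can be a union of finitely many hyperplanes); and for injectivity the cleanest formulation of your idea is that $X$ itself, with $Z=Y$, is a Cartier curve, and a trivializing $f$ for a divisor supported off $Y$ is automatically a unit along $Y$, hence gives an LW relation.

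The genuine gap is in the first isomorphism, i.e.\ the group $\CH_0(X,Y)=\CH^{BK}_0(X,Y)$. Your parenthetical claim that in dimension one the good curves of Definition~\ref{definition_good_curve} ``are simply (unions of) components of $X$'' is false: a good curve is a \emph{finite morphism} $\nu\colon C\to X$, not a closed immersion --- for instance a finite flat cover of a component, or a disjoint union of several copies of a component, is allowed. Consequently ``the same computation, run over arbitrary good curves'' does not run: one cannot extend $f\in\roi^{\times}_{C,Z}$ by $1$, because $C$ does not sit inside $X$. What is needed instead is the norm argument: componentwise one has $\nu_*(\mathrm{div}(f))=\mathrm{div}\bigl(N_{k(C)/k(X)}(f)\bigr)$ (cf.\ \cite[Prop. 1.4]{Fulton1998}), and one must check that $N_{k(C)/k(X)}(f)$ is again a unit at the points of $X_{\rm sing}$ --- this is exactly what the lci hypothesis (iii) in Definition~\ref{definition_good_curve} is for, and it is the content of the comparison results in \cite{Binda2018} (e.g.\ \cite[Lem. 3.10]{Binda2018} and the discussion around it). Without this step the factorization of $Z_0(X,Y)\to\Pic(X)$ through $\CH^{BK}_0(X,Y)$, on which your concluding diagram chase rests, is unproved; the chase alone only yields $\CH^{LW}_0(X,Y)\cong\Pic(X)$ together with the canonical surjection $\CH^{LW}_0(X,Y)\twoheadrightarrow\CH^{BK}_0(X,Y)$, not its injectivity. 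The repair is standard --- carry out the norm computation, or invoke \cite{Binda2018} --- but it is a missing step, not a formality.
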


\begin{remark}
If a variety $X$ over a field $k$ is not regular, the Milnor K-sheaf $\cal K^M_{j,X}$ does not satisfy the Gersten conjecture. Nevertheless, one expects a Bloch-Quillen type formula to hold if one replaces the ordinary Chow group by the Levine-Weibel Chow group:
$\CH^{LW}_0(X,j)\cong H^d(X,\hat K^M_{j+d}).$ For $j=0$ this has been studied extensively, see for example \cite{GuptaKrishna2020}. Proposition \ref{Bloch_formula_dim_1} is an instance of this conjecture for $j\geq 0$ in dimension one.
\end{remark}

\subsection{Comparison with cdh-motivic cohomology}\label{subsection_cdh–motivic}

If $X$ is a smooth variety, then motivic cohomology coincides with Bloch's higher Chow groups:
$$\CH^{i}(X,2i-n)\cong H^{n,i}(X,\bb Z)(\cong H^{n,i}_{\rm cdh}(X,\bb Z)).$$
Taking the cohomology of the motivic complex $\bb Z(n)$ in the cdh-topology, motivic cohomology was extended to singular schemes by Suslin and Voevodsky in \cite[Sec. 5]{SV00}. In this section we relate these groups in the range of zero cycles and with finite coefficients to the groups defined in Section \ref{section_cohomological_theory_of_cycle_complexes}.

For the rest of this section let $X$ be a simple normal crossing variety and let $\epsilon:X_{\rm cdh}\to X_{\rm Nis}$ be the change of sites. Let $\Lambda(j)_{X,\rm Nis}:=R\epsilon_*\Lambda(j)_X$.

\begin{proposition}\cite[Prop. 8.1]{KerzEsnaultWittenberg2016}\label{proposition_comparison_sheaves_KEW}
For $\Lambda =\bb Z/n\bb Z$ with $n$ prime to $p$, there is a canonical
isomorphism of Nisnevich sheaves
$$S : \cal K_{X,j}^M/n\to \cal H^j(\Lambda(j)_{X,Nis}).$$
For $i>j$, the sheaf $\cal H^i(\Lambda(j)_{X,Nis})$ vanishes.
\end{proposition}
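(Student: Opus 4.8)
The plan is to verify both assertions Nisnevich-locally, i.e.\ on the stalks of the two sides at a point $x\in X$. Since $X$ is simple normal crossing, after passing to the henselian (or \'etale) local ring at $x$ we may assume $X$ is the standard model $\Spec(R/(t_1\cdots t_r))$ with $R$ regular local, regular parameters $t_1,\dots,t_d$, and $X=\bigcup_{i=1}^r X_i$ where $X_i=\{t_i=0\}$; every stratum $X_I:=\bigcap_{i\in I}X_i$ is then smooth. The map $S$ itself is the composite of the reduction mod $n$ of the tautological symbol map from Milnor K-theory to the top cohomology sheaf of the motivic complex with the pushforward defining $\Lambda(j)_{X,\rm Nis}=R\epsilon_*\Lambda(j)_X$; so the content of the proposition is that this composite is an isomorphism onto $\cal H^j$ and that the higher cohomology sheaves vanish.

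On smooth schemes the statement is classical input: Beilinson--Lichtenbaum (a theorem via the norm residue isomorphism of Voevodsky--Rost) gives $\Lambda(j)\simeq \tau_{\le j}R\pi_*\mu_n^{\otimes j}$ for $\pi\colon (-)_{\text{\'et}}\to(-)_{\rm Nis}$, whence $\cal H^i(\Lambda(j))=0$ for $i>j$ and $\cal H^j(\Lambda(j))\cong R^j\pi_*\mu_n^{\otimes j}\cong \cal K^M_{\cdot,j}/n$, the last identification being the sheafified Bloch--Kato isomorphism. Thus the proposition, and in particular the map $S$, is known on each stratum $X_I$. The tool to descend from the smooth strata to $X$ is cdh-descent: the reduced SNC scheme carries the semisimplicial resolution $X_\bullet$ with $X_p=\coprod_{|I|=p+1}X_I$ (each $X_I$ smooth) and finite maps $\iota_I\colon X_I\into X$, and since the closed-cover squares assembling the strata are cdh-squares, $\Lambda(j)_X$ is computed by this resolution. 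Pushing forward to the Nisnevich site yields a descent spectral sequence
$$E_1^{p,q}=\bigoplus_{|I|=p+1}(\iota_I)_*\cal H^q(\Lambda(j)_{X_I,\rm Nis})\Rightarrow \cal H^{p+q}(\Lambda(j)_{X,\rm Nis}),$$
with differentials the alternating sums of restriction maps.

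By the smooth case $E_1^{p,q}=0$ for $q>j$, so along the top row $q=j$ we obtain the \v Cech complex of Milnor K-sheaves $\bigoplus_i \cal K^M_{X_i,j}/n\to\bigoplus_{i<i'}\cal K^M_{X_{ii'},j}/n\to\cdots$. To pin down $\cal H^j$ I would show this complex resolves $\cal K^M_{X,j}/n$ in degree $0$: its $H^0$ is computed from the Milnor K-theory mod $n$ of the SNC local ring $R/(t_1\cdots t_r)$ and of its components, and the required identification is a local Milnor K-theory computation on the standard model. This gives $S$ as an isomorphism onto $\cal H^j$.

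The hard part will be the vanishing $\cal H^i(\Lambda(j)_{X,\rm Nis})=0$ for $i>j$. This does \emph{not} follow formally from the smooth vanishing: although every $E_1^{p,q}$ with $q>j$ is zero, the deeper strata contribute terms $E_1^{p,q}$ with $p\ge 1$ and $q<j$ in total degree $p+q>j$, and one must show these (together with the $p\ge1$ part of the top row) are killed, i.e.\ that the relevant \v Cech-type complexes of $R^q\pi_*\mu_n^{\otimes j}$ are exact in positive degree. This is exactly where the SNC hypothesis is essential, via the normal-crossing structure of the strata and a purity/weight argument for $\mu_n^{\otimes j}$; the assumption $n$ prime to $p$ enters through proper base change, which makes \'etale cohomology with $\mu_n^{\otimes j}$-coefficients satisfy cdh-descent and underlies the smooth Beilinson--Lichtenbaum input. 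I would organise this last step as a direct computation of $R\Gamma$ of $\mu_n^{\otimes j}$ on the local model, reducing the higher vanishing to the acyclicity of an explicit Koszul-type complex built from the components $t_i$.
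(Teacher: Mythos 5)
First, a point of reference: the paper contains no proof of this proposition at all --- it is imported verbatim from \cite[Prop.~8.1]{KerzEsnaultWittenberg2016} --- so your attempt can only be measured against the proof in \emph{loc.~cit.} Your skeleton (Beilinson--Lichtenbaum/Bloch--Kato on the smooth strata, closed-cover cdh-descent along the semisimplicial resolution by strata, and the pushed-forward descent spectral sequence) is indeed the standard route. The genuine gap is that the two steps you defer are exactly where the content lies, and for the one you call ``the hard part'' the toolbox you name is the wrong one. What makes the rows of your spectral sequence exact in positive degrees is not purity or a weight argument but henselian rigidity (Gabber's affine analogue of proper base change): for every stratum $X_I$ through a point $x$ and every $q\le j$,
$$\bigl(R^q\pi_*\mu_n^{\otimes j}\bigr)_x \;\cong\; H^q_{et}\bigl((\cal O_{X_I,x})^h,\mu_n^{\otimes j}\bigr)\;\cong\; H^q_{et}(\kappa(x),\mu_n^{\otimes j}),$$
one and the same group for \emph{all} strata through $x$, with all restriction maps between strata becoming identities. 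Hence at each Nisnevich stalk every row $q\le j$ of your $E_1$-page is the constant-coefficient cochain complex of the full simplex on the set of components through $x$ (the nerve is a full simplex precisely because all these strata contain $x$), which is exact except in degree $0$. This one observation kills $E_2^{p,q}$ for all $p\ge 1$, proving the vanishing for $i>j$ and giving $\cal H^i(\Lambda(j)_{X,Nis})_x\cong H^i_{et}(\kappa(x),\mu_n^{\otimes j})$ for $i\le j$ in a single stroke; Bloch--Kato for the residue field then settles $i=j$. By contrast, your proposed ``direct computation of $R\Gamma$ of $\mu_n^{\otimes j}$ on the local model'' via a Koszul-type complex in the $t_i$ and purity would drag in cohomology with supports on punctured local schemes, with shifts and Tate twists that do not cancel in any visible way; that route would not close.

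The degree-zero identification is also more than the ``local Milnor K-theory computation'' you promise. You must first construct $S$ on the singular scheme $X$ at all (this uses that SNC schemes are seminormal, so that $\cal O^\times$ is already a cdh sheaf and symbols make sense in $\cal H^j(\Lambda(j)_{X,Nis})$, or else the explicit construction of \emph{loc.~cit.}), and you must prove injectivity of the reduction $K^M_j(\cal O^h_{X,x})/n\to K^M_j(\kappa(x))/n$ at points of the singular locus: surjectivity is just lifting of units over a henselian local ring, but injectivity requires either the left-exactness of $0\to\cal K^M_{X,j}\to\bigoplus_i\cal K^M_{X_i,j}\to\bigoplus_{i<i'}\cal K^M_{X_{ii'},j}$ for SNC schemes (established in \emph{loc.~cit.}) or again a rigidity statement, this time for (improved) Milnor K-theory. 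In summary: right skeleton, but both load-bearing steps are missing, and the method you propose for the critical one would fail; the missing idea that makes everything formal is rigidity plus contractibility of the nerve.
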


\begin{corollary}\label{corollary_App_B}
Let $d=\dim X$.
Then $$H^d_{\rm Nis}(X,\cal K^M_{j+d}/n)\cong H_{\rm cdh}^{2d+j,d+j}(X,\bb Z/n)$$
for all $j\geq 0$. \end{corollary}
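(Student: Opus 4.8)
The plan is to deduce the corollary from Proposition \ref{proposition_comparison_sheaves_KEW} by computing both sides via the Nisnevich-local hypercohomology spectral sequence and comparing them in the top cohomological degree. First I would set $\Lambda=\bb Z/n$ with $n$ prime to $p$ and rewrite the cdh-motivic cohomology group Nisnevich-locally: since $\Lambda(d+j)_{X,\rm Nis}=R\epsilon_*\Lambda(d+j)_X$, there is an identification $H^{2d+j}_{\rm cdh}(X,\bb Z/n(d+j))\cong \bb H^{2d+j}_{\rm Nis}(X,\Lambda(d+j)_{X,\rm Nis})$. I would then run the hypercohomology (local-to-global) spectral sequence
\begin{equation*}
E_2^{a,b}=H^a_{\rm Nis}(X,\cal H^b(\Lambda(d+j)_{X,\rm Nis}))\Longrightarrow \bb H^{a+b}_{\rm Nis}(X,\Lambda(d+j)_{X,\rm Nis}).
\end{equation*}

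The key input is the vanishing statement in Proposition \ref{proposition_comparison_sheaves_KEW}: the sheaf $\cal H^b(\Lambda(d+j)_{X,\rm Nis})$ vanishes for $b>d+j$. Since $X$ is a $d$-dimensional Noetherian scheme, its Nisnevich cohomological dimension is at most $d$, so $E_2^{a,b}=0$ whenever $a>d$. The total degree I care about is $a+b=2d+j$, and the only lattice point $(a,b)$ with $a+b=2d+j$, $a\le d$, and $b\le d+j$ is the corner $(a,b)=(d,d+j)$. Thus the abutment in degree $2d+j$ is computed by a single $E_2$-entry, and I must check that this corner term survives: all differentials into and out of $E_r^{d,d+j}$ land in groups with $a>d$ or $b>d+j$, hence vanish. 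Therefore
\begin{equation*}
\bb H^{2d+j}_{\rm Nis}(X,\Lambda(d+j)_{X,\rm Nis})\cong H^d_{\rm Nis}(X,\cal H^{d+j}(\Lambda(d+j)_{X,\rm Nis})).
\end{equation*}

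Finally I would apply the isomorphism $S:\cal K^M_{X,d+j}/n\isoto\cal H^{d+j}(\Lambda(d+j)_{X,\rm Nis})$ of Proposition \ref{proposition_comparison_sheaves_KEW} (the case $i=j$ of that statement, with weight $d+j$), which gives $H^d_{\rm Nis}(X,\cal K^M_{d+j}/n)\cong H^d_{\rm Nis}(X,\cal H^{d+j}(\Lambda(d+j)_{X,\rm Nis}))$, and combine the displayed isomorphisms to obtain the claim $H^d_{\rm Nis}(X,\cal K^M_{d+j}/n)\cong H^{2d+j,d+j}_{\rm cdh}(X,\bb Z/n)$.

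I expect the main obstacle to be the degree bookkeeping rather than any deep input: one has to be careful that the weight carried by the motivic complex is $d+j$ (not $j$), so that the relevant comparison sheaf is $\cal H^{d+j}(\Lambda(d+j)_{X,\rm Nis})\cong\cal K^M_{d+j}/n$, and that the cohomological-dimension bound together with the sheaf-vanishing range isolates exactly the corner $(d,d+j)$. A secondary point worth verifying is that the Nisnevich cohomological dimension bound $\le d=\dim X$ indeed holds for the singular scheme $X$, which is standard but should be invoked explicitly.
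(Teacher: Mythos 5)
Your proposal is correct and follows essentially the same route as the paper: the paper's proof also runs the Nisnevich hypercohomology spectral sequence for $\Lambda(d+j)_{X,\rm Nis}$, uses the vanishing of $\cal H^{s}(\Lambda(d+j)_{\rm Nis})$ for $s>d+j$ from Proposition \ref{proposition_comparison_sheaves_KEW} together with the bound of the Nisnevich cohomological dimension by $\dim X=d$, and isolates the corner term $E_2^{d,d+j}$ to conclude. Your additional explicit check that all differentials into and out of the corner vanish, and your care with the weight being $d+j$, only make explicit what the paper leaves implicit.
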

\begin{proof}
We show that the first map in the composition
$$H^d_{\rm Nis}(X,\cal K^M_{j+d}/n)\to H_{}^{2d+j}(X,\Lambda(d+j)_{Nis})\cong H_{\rm cdh}^{2d+j,d+j}(X,\Lambda)$$ is an isomorphism.
Consider the spectral sequence
$$E_2^{r,s}(X)=H^r_{\rm Nis}(X,\cal H^s(\Lambda(j+d)_{\rm Nis})\Rightarrow H^{r+s}(X,\Lambda(j+d)_{\rm Nis}).$$
Then $H^{r}(X,\cal H^{>j+d}(\Lambda(j+d)_{\rm Nis}))=0$ by Proposition \ref{proposition_comparison_sheaves_KEW} and $H^{>d}(X,\cal H^s(\Lambda(j+d)_{\rm Nis})=0$ since the Nisnevich dimension is the dimension of the scheme.
Therefore $E_2^{d,d+j}(X)=H^d_{\rm Nis}(X,\cal H^{d+j}(\Lambda(j+d)_{\rm Nis})\cong H^{2d+j}(X,\Lambda(d+j)_{\rm Nis}). $
\end{proof}

\begin{remark}
An important property of cdh-motivic cohomology is the Mayer-Vietoris property for closed covers: let $X=\bigcup_i^r X_i$ and $X'=X_2\cup\dots\cup X_{r-1}$. Then the sequence
$$\dots \to H^{i-1,j}_{\rm cdh}(X_1)\oplus H^{i-1,j}_{\rm cdh}(X')\to H^{i-1,j}_{\rm cdh}(X_1\cap X') \to H^{i,j}_{\rm cdh}(X)$$ $$\to H^{i,j}_{\rm cdh}(X_1)\oplus H^{i,j}_{\rm cdh}(X')\to H^{i,j}_{\rm cdh}(X_1\cap X')\to \dots$$
is exact. We remark that this sequence does not directly relate to the cohomology groups of Section \ref{section_cohomological_theory_of_cycle_complexes}. We do not know if Corollary \ref{corollary_App_B} also holds in the Chow range, i.e. if there is an isomorphism
$H^r_{\rm Nis}(X,\cal K^M_{r}/n)\cong H_{\rm cdh}^{2r,r}(X,\bb Z/n)$
for all $r\geq 0$.
\end{remark}

\bibliographystyle{acm}
\bibliography{Bibliografie}

\begin{thebibliography}{10}

\bibitem{AraujoKollar2003}
{\sc Araujo, C., and Koll\'{a}r, J.}
\newblock Rational curves on varieties.
\newblock In {\em Higher dimensional varieties and rational points ({B}udapest,
  2001)}, vol.~12 of {\em Bolyai Soc. Math. Stud.} Springer, Berlin, 2003,
  pp.~13--68.

\bibitem{Baum1975}
{\sc Baum, P., Fulton, W., and MacPherson, R.}
\newblock Riemann-{R}och for singular varieties.
\newblock {\em Inst. Hautes \'{E}tudes Sci. Publ. Math.}, 45 (1975), 101--145.

\bibitem{Beilinson1987}
{\sc Be\u{\i}linson, A.~A.}
\newblock Height pairing between algebraic cycles.
\newblock In {\em {$K$}-theory, arithmetic and geometry ({M}oscow,
  1984--1986)}, vol.~1289 of {\em Lecture Notes in Math.} Springer, Berlin,
  1987, pp.~1--25.

\bibitem{Binda2018}
{\sc Binda, F., and Krishna, A.}
\newblock Zero cycles with modulus and zero cycles on singular varieties.
\newblock {\em Compos. Math. 154}, 1 (2018), 120--187.

\bibitem{BindaKrishna2021}
{\sc Binda, F., and Krishna, A.}
\newblock Rigidity for relative 0-cycles.
\newblock {\em Ann. Sc. Norm. Super. Pisa Cl. Sci. (5) 22}, 1 (2021), 241--267.

\bibitem{BindaKrishna2022}
{\sc Binda, F., and Krishna, A.}
\newblock Zero-cycle groups on algebraic varieties.
\newblock {\em J. \'{E}c. polytech. Math. 9\/} (2022), 281--325.

\bibitem{Bl86}
{\sc Bloch, S.}
\newblock Algebraic cycles and higher {$K$}-theory.
\newblock {\em Adv. in Math. 61}, 3 (1986), 267--304.

\bibitem{BO74}
{\sc Bloch, S., and Ogus, A.}
\newblock Gersten's conjecture and the homology of schemes.
\newblock {\em Ann. Sci. \'Ecole Norm. Sup. (4) 7\/} (1974), 181--201 (1975).

\bibitem{CT1983}
{\sc Colliot-Th\'{e}l\`ene, J.-L.}
\newblock Hilbert's {T}heorem 90 for {$K_{2}$}, with application to the {C}how
  groups of rational surfaces.
\newblock {\em Invent. Math. 71}, 1 (1983), 1--20.

\bibitem{CT2005}
{\sc Colliot-Th\'{e}l\`ene, J.-L.}
\newblock Un th\'{e}or\`eme de finitude pour le groupe de {C}how des
  z\'{e}ro-cycles d'un groupe alg\'{e}brique lin\'{e}aire sur un corps
  {$p$}-adique.
\newblock {\em Invent. Math. 159}, 3 (2005), 589--606.

\bibitem{CTCoray1979}
{\sc Colliot-Th\'{e}l\`ene, J.-L., and Coray, D.}
\newblock L'\'{e}quivalence rationnelle sur les points ferm\'{e}s des surfaces
  rationnelles fibr\'{e}es en coniques.
\newblock {\em Compositio Math. 39}, 3 (1979), 301--332.

\bibitem{Da18}
{\sc Dahlhausen, C.}
\newblock Milnor {K}-theory of complete discrete valuation rings with finite
  residue fields.
\newblock {\em J. Pure Appl. Algebra 222}, 6 (2018), 1355--1371.

\bibitem{Deglise2006}
{\sc D\'{e}glise, F.}
\newblock Transferts sur les groupes de {C}how \`a coefficients.
\newblock {\em Math. Z. 252}, 2 (2006), 315--343.

\bibitem{Elkik1973}
{\sc Elkik, R.}
\newblock Solutions d'\'{e}quations \`a coefficients dans un anneau
  hens\'{e}lien.
\newblock {\em Ann. Sci. \'{E}cole Norm. Sup. (4) 6\/} (1973), 553--603 (1974).

\bibitem{EWB16}
{\sc Esnault, H., and Wittenberg, O.}
\newblock On the cycle class map for zero-cycles over local fields.
\newblock {\em Ann. Sci. \'Ec. Norm. Sup\'er. (4) 49}, 2 (2016), 483--520.
\newblock With an appendix by Spencer Bloch.

\bibitem{Fo15}
{\sc Forr\'e, P.}
\newblock The kernel of the reciprocity map of varieties over local fields.
\newblock {\em J. Reine Angew. Math. 698\/} (2015), 55--69.

\bibitem{Fulton1998}
{\sc Fulton, W.}
\newblock {\em Intersection theory}, second~ed., vol.~2 of {\em Ergebnisse der
  Mathematik und ihrer Grenzgebiete. 3. Folge. A Series of Modern Surveys in
  Mathematics [Results in Mathematics and Related Areas. 3rd Series. A Series
  of Modern Surveys in Mathematics]}.
\newblock Springer-Verlag, Berlin, 1998.

\bibitem{GilleSzamuely2006}
{\sc Gille, P., and Szamuely, T.}
\newblock {\em Central simple algebras and {G}alois cohomology}, vol.~101 of
  {\em Cambridge Studies in Advanced Mathematics}.
\newblock Cambridge University Press, Cambridge, 2006.

\bibitem{EGA3}
{\sc Grothendieck, A.}
\newblock \'el\'ements de g\'eom\'etrie alg\'ebrique. {III}. \'etude
  cohomologique des faisceaux coh\'erents. {I}.
\newblock {\em Inst. Hautes \'Etudes Sci. Publ. Math.}, 11 (1961), 167.

\bibitem{EGA4}
{\sc Grothendieck, A.}
\newblock \'el\'ements de g\'eom\'etrie alg\'ebrique. {IV}. \'etude locale des
  sch\'emas et des morphismes de sch\'emas {IV}.
\newblock {\em Inst. Hautes \'Etudes Sci. Publ. Math.}, 32 (1967), 361.

\bibitem{GuptaKrishna2020}
{\sc Gupta, R., and Krishna, A.}
\newblock {$K$}-theory and 0-cycles on schemes.
\newblock {\em J. Algebraic Geom. 29}, 3 (2020), 547--601.

\bibitem{Ha66}
{\sc Hartshorne, R.}
\newblock {\em Residues and duality}.
\newblock Lecture notes of a seminar on the work of A. Grothendieck, given at
  Harvard 1963/64. With an appendix by P. Deligne. Lecture Notes in
  Mathematics, No. 20. Springer-Verlag, Berlin-New York, 1966.

\bibitem{Jannsen1994}
{\sc Jannsen, U.}
\newblock Motivic sheaves and filtrations on {C}how groups.
\newblock In {\em Motives ({S}eattle, {WA}, 1991)}, vol.~55 of {\em Proc.
  Sympos. Pure Math.} Amer. Math. Soc., Providence, RI, 1994, pp.~245--302.

\bibitem{Ka83}
{\sc Kato, K.}
\newblock Milnor {$K$}-theory and the {C}how group of zero cycles.
\newblock In {\em Applications of algebraic {$K$}-theory to algebraic geometry
  and number theory, {P}art {I}, {II} ({B}oulder, {C}olo., 1983)}, vol.~55 of
  {\em Contemp. Math.} Amer. Math. Soc., Providence, RI, 1986, pp.~241--253.

\bibitem{KaS83}
{\sc Kato, K., and Saito, S.}
\newblock Unramified class field theory of arithmetical surfaces.
\newblock {\em Ann. of Math. (2) 118}, 2 (1983), 241--275.

\bibitem{KaS86}
{\sc Kato, K., and Saito, S.}
\newblock Global class field theory of arithmetic schemes.
\newblock In {\em Applications of algebraic {$K$}-theory to algebraic geometry
  and number theory, {P}art {I}, {II} ({B}oulder, {C}olo., 1983)}, vol.~55 of
  {\em Contemp. Math.} Amer. Math. Soc., Providence, RI, 1986, pp.~255--331.

\bibitem{Kerz2010}
{\sc Kerz, M.}
\newblock Milnor {$K$}-theory of local rings with finite residue fields.
\newblock {\em J. Algebraic Geom. 19}, 1 (2010), 173--191.

\bibitem{KerzEsnaultWittenberg2016}
{\sc Kerz, M., Esnault, H., and Wittenberg, O.}
\newblock A restriction isomorphism for cycles of relative dimension zero.
\newblock {\em Camb. J. Math. 4}, 2 (2016), 163--196.

\bibitem{KeS12}
{\sc Kerz, M., and Saito, S.}
\newblock Cohomological {H}asse principle and motivic cohomology for arithmetic
  schemes.
\newblock {\em Publ. Math. Inst. Hautes \'Etudes Sci. 115\/} (2012), 123--183.

\bibitem{Kollar1996}
{\sc Koll\'{a}r, J.}
\newblock {\em Rational curves on algebraic varieties}, vol.~32 of {\em
  Ergebnisse der Mathematik und ihrer Grenzgebiete. 3. Folge. A Series of
  Modern Surveys in Mathematics [Results in Mathematics and Related Areas. 3rd
  Series. A Series of Modern Surveys in Mathematics]}.
\newblock Springer-Verlag, Berlin, 1996.

\bibitem{Kollar2004}
{\sc Koll\'{a}r, J.}
\newblock Specialization of zero cycles.
\newblock {\em Publ. Res. Inst. Math. Sci. 40}, 3 (2004), 689--708.

\bibitem{KollarMiyaokaMori1992}
{\sc Koll\'{a}r, J., Miyaoka, Y., and Mori, S.}
\newblock Rationally connected varieties.
\newblock {\em J. Algebraic Geom. 1}, 3 (1992), 429--448.

\bibitem{Kollar2003}
{\sc Koll\'{a}r, J., and Szab\'{o}, E.}
\newblock Rationally connected varieties over finite fields.
\newblock {\em Duke Math. J. 120}, 2 (2003), 251--267.

\bibitem{LeWe85}
{\sc Levine, M., and Weibel, C.}
\newblock Zero cycles and complete intersections on singular varieties.
\newblock {\em J. Reine Angew. Math. 359\/} (1985), 106--120.

\bibitem{Li02}
{\sc Liu, Q.}
\newblock {\em Algebraic geometry and arithmetic curves}, vol.~6 of {\em Oxford
  Graduate Texts in Mathematics}.
\newblock Oxford University Press, Oxford, 2002.
\newblock Translated from the French by Reinie Ern\'e, Oxford Science
  Publications.

\bibitem{Lu16}
{\sc L{\"u}ders, M.}
\newblock On a base change conjecture for higher zero-cycles.
\newblock {\em Homology Homotopy Appl. 20}, 1 (2018), 59--68.

\bibitem{Lueders2019}
{\sc L\"uders, M.}
\newblock Algebraization for zero-cycles and the $p$-adic cycle class map.
\newblock {\em Mathematical Research Letters 26}, 2 (2019), 557--585.

\bibitem{Lu17}
{\sc L\"{u}ders, M.}
\newblock A restriction isomorphism for zero-cycles with coefficients in
  {M}ilnor {K}-theory.
\newblock {\em Camb. J. Math. 7}, 1-2 (2019), 1--31.

\bibitem{LuedersGerstenKM}
{\sc L\"uders, M.}
\newblock On the relative {G}ersten conjecture for {M}ilnor {K}-theory in the
  smooth case.
\newblock {\em arxiv.org/abs/2010.02622\/} (2020).

\bibitem{MVW06}
{\sc Mazza, C., Voevodsky, V., and Weibel, C.}
\newblock {\em Lecture notes on motivic cohomology}, vol.~2 of {\em Clay
  Mathematics Monographs}.
\newblock American Mathematical Society, Providence, RI; Clay Mathematics
  Institute, Cambridge, MA, 2006.

\bibitem{NS89}
{\sc Nesterenko, Y.~P., and Suslin, A.~A.}
\newblock Homology of the general linear group over a local ring, and
  {M}ilnor's {$K$}-theory.
\newblock {\em Izv. Akad. Nauk SSSR Ser. Mat. 53}, 1 (1989), 121--146.

\bibitem{Pa03}
{\sc Panin, I.~A.}
\newblock The equicharacteristic case of the {G}ersten conjecture.
\newblock {\em Tr. Mat. Inst. Steklova 241}, Teor. Chisel, Algebra i Algebr.
  Geom. (2003), 169--178.

\bibitem{Ro96}
{\sc Rost, M.}
\newblock Chow groups with coefficients.
\newblock {\em Doc. Math. 1\/} (1996), No. 16, 319--393.

\bibitem{SS10}
{\sc Saito, S., and Sato, K.}
\newblock A finiteness theorem for zero-cycles over {$p$}-adic fields.
\newblock {\em Ann. of Math. (2) 172}, 3 (2010), 1593--1639.
\newblock With an appendix by Uwe Jannsen.

\bibitem{SaitoSato2010}
{\sc Saito, S., and Sato, K.}
\newblock A {$p$}-adic regulator map and finiteness results for arithmetic
  schemes.
\newblock {\em Doc. Math.}, Extra vol.: Andrei A. Suslin sixtieth birthday
  (2010), 525--594.

\bibitem{SS14}
{\sc Saito, S., and Sato, K.}
\newblock Zero-cycles on varieties over {$p$}-adic fields and {B}rauer groups.
\newblock {\em Ann. Sci. \'Ec. Norm. Sup\'er. (4) 47}, 3 (2014), 505--537.

\bibitem{Sato2005}
{\sc Sato, K.}
\newblock Higher-dimensional arithmetic using {$p$}-adic \'{e}tale {T}ate
  twists.
\newblock {\em Homology Homotopy Appl. 7}, 3 (2005), 173--187.

\bibitem{Sato2007}
{\sc Sato, K.}
\newblock {$p$}-adic \'etale {T}ate twists and arithmetic duality.
\newblock {\em Ann. Sci. \'Ecole Norm. Sup. (4) 40}, 4 (2007), 519--588.
\newblock With an appendix by Kei Hagihara.

\bibitem{SV00}
{\sc Suslin, A., and Voevodsky, V.}
\newblock Bloch-{K}ato conjecture and motivic cohomology with finite
  coefficients.
\newblock In {\em The arithmetic and geometry of algebraic cycles ({B}anff,
  {AB}, 1998)}, vol.~548 of {\em NATO Sci. Ser. C Math. Phys. Sci.} Kluwer
  Acad. Publ., Dordrecht, 2000, pp.~117--189.

\bibitem{To92}
{\sc Totaro, B.}
\newblock Milnor {$K$}-theory is the simplest part of algebraic {$K$}-theory.
\newblock {\em $K$-Theory 6}, 2 (1992), 177--189.

\bibitem{Weibel1989}
{\sc Weibel, C.~A.}
\newblock Homotopy algebraic {$K$}-theory.
\newblock In {\em Algebraic {$K$}-theory and algebraic number theory
  ({H}onolulu, {HI}, 1987)}, vol.~83 of {\em Contemp. Math.} Amer. Math. Soc.,
  Providence, RI, 1989, pp.~461--488.

\bibitem{Weibel2013}
{\sc Weibel, C.~A.}
\newblock {\em The {$K$}-book}, vol.~145 of {\em Graduate Studies in
  Mathematics}.
\newblock American Mathematical Society, Providence, RI, 2013.
\newblock An introduction to algebraic $K$-theory.

\end{thebibliography}

\noindent
\parbox{0.5\linewidth}{
\noindent
Morten L\"uders \\ 
Universität Heidelberg\\
Mathematisches Institut \\
Im Neuenheimer Feld 205 \\
69120 Heidelberg \\
Germany\\
{\tt mlueders@mathi.uni-heidelberg.de}
}

\end{document}